\documentclass[12pt]{amsart}

\usepackage{fullpage,mathrsfs,amssymb,color,url,amsmath,mathtools}
\usepackage{tikz-cd}
\usepackage[all, cmtip]{xy}
\definecolor{hot}{RGB}{65,105,225}
\usepackage[pagebackref=true,colorlinks=true, linkcolor=hot ,  citecolor=hot, urlcolor=hot]{hyperref}

\def\sD{\mathscr{D}}
\def\ra{\rightarrow}
\def\bk{\mathbf{k}}
\def\bs{\mathbf{s}}
\def\bf{\mathbf{f}}

\def\ba{\mathbf{a}}

\def\bal{{\boldsymbol{\alpha}}}

\def\bla{{\boldsymbol{\lambda}}}
\def\bC{\mathbb{C}}
\def\bQ{\mathbb{Q}}

\def\bZ{\mathbb{Z}}
\def\bN{\mathbb{N}}

\def\cS{\mathcal{S}}

\def\cM{\mathcal{M}}

\def\al{\alpha}
\def\sA{\mathscr{A}}

\def\sO{\mathscr{O}}

\newcommand{\Ann}{\textup{Ann}}
\newcommand{\supp}{\textup{supp}}

\newcommand{\Mod}{\textup{Mod}}

\newcommand{\gr}{\textup{gr}\,}
\newcommand{\grF }{\textup{gr}^F}

\newcommand{\Spec}{\textup{Spec}\,}

\newcommand{\DR}{\textup{DR}}

\newcommand{\Chr}{\textup{Ch}^{\textup{rel}}}
\newcommand{\be}{\begin{equation} }
\newcommand{\ee}{\end{equation} }

\def\Exp{\textup{Exp}}
\def\Ext{\textup{Ext}}
\def\Rhom{\textup{RHom}}

\theoremstyle{plain}
\newtheorem{theorem}{Theorem}[subsection]
\newtheorem{lemma}[theorem]{Lemma}
\newtheorem{prop}[theorem]{Proposition}

\theoremstyle{definition}
\newtheorem{rmk}[theorem]{Remark}

\newtheorem{definition}[theorem]{Definition}
\newtheorem{defn}[theorem]{Definition}

\title{Zero loci of Bernstein-Sato ideals}
\author{Nero Budur}
\address{KU Leuven, Celestijnenlaan 200B, B-3001 Leuven, Belgium} 
\email{nero.budur@kuleuven.be}
\author{Robin van der Veer}
\address{KU Leuven, Celestijnenlaan 200B, B-3001 Leuven, Belgium} 
\email{robin.vanderveer@kuleuven.be}
\author{Lei Wu}
\address{Department of Mathematics, University of Utah, 155 S. 14000 E, Salt Lake City, UT 84112, USA}
\email{lwu@math.utah.edu}
\author{Peng Zhou}
\address{Institut des Hautes \'Etudes Scientifiques, Le Bois-Marie, 35 Route de Chartres, 91440 Bures-sur-Yvette, France}
\email{pzhou.math@gmail.com}

\keywords{Bernstein-Sato ideal; $b$-function; monodromy; local system; $\sD$-module.}
\subjclass[2010]{14F10; 13N10; 32C38; 32S40; 32S55.}

\setcounter{tocdepth}{1}

\numberwithin{equation}{section}

\begin{document}

\begin{abstract} We prove a conjecture of the first author relating the Bernstein-Sato ideal of a finite collection of multivariate polynomials with cohomology support loci of rank one complex local systems. This generalizes a classical theorem of Malgrange and Kashiwara relating the $b$-function of a multivariate polynomial  with the monodromy eigenvalues on the Milnor fibers cohomology.
\end{abstract}

\maketitle

\tableofcontents

\section{Introduction}

\subsection{}\label{subsDef}  Let $F=(f_1,\ldots,f_r):(X,x)\ra (\bC^r,0)$ be the germ of a holomorphic map from a complex manifold $X$. The {\it (local) Bernstein-Sato ideal} of $F$ is the ideal $B_F$ in $\bC[s_1,\ldots,s_r]$ generated by all $b\in \bC[s_1,\ldots,s_r]$  such that in a neighborhood of $x$
\be\label{eqBF}
b\prod_{i=1}^rf_i^{s_i}=P\cdot\prod_{i=1}^rf_i^{s_i+1}
\ee
for some $P\in\sD_{X}[s_1,\ldots,s_r]$, where $\sD_{X}$ is the ring of holomorphic  differential operators. Sabbah \cite{Sab} showed that $B_F$ is not zero.

\subsection{}
If $F=(f_1,\ldots,f_r):X\ra\bC^r$ is a morphism from a smooth complex affine irreducible algebraic variety, the {\it (global) Bernstein-Sato ideal} $B_F$ is defined as the ideal generated by all $b\in \bC[s_1,\ldots,s_r]$ such that (\ref{eqBF}) holds globally with $\sD_X$ replaced by the ring of algebraic  differential operators. The global Bernstein-Sato ideal is the intersection of all the local ones at points $x$ with some $f_i(x)=0$, and there are only finitely many distinct local Bernstein-Sato ideals, see \cite{BMM-c}, \cite{BO}.

\subsection{} It was clear from the beginning that $B_F$ contains some topological information about $F$, e.g. \cite{KK,  Sab, Lo}. However, besides the case $r=1$, it was not clear what  precise topological information is provided by $B_F$. Later, a conjecture  based on computer experiments was formulated in \cite{B-ls} addressing this problem. In this article we prove this conjecture.

\subsection{} Let us recall what happens in the case $r=1$. If $f:X\ra \bC$  is a regular function on a smooth complex affine irreducible algebraic variety, or the germ at $x\in X$ of a holomorphic function on a complex manifold, the monic generator of the Bernstein-Sato ideal of $f$ in $\bC[s]$ is called the {\it Bernstein-Sato polynomial}, or the {\it $b$-function}, of $f$ and it is denoted by $b_f(s)$. The non-triviality of $b_f(s)$ is a classical result of Bernstein \cite{Ber} in the algebraic case, and Bj\"ork \cite{Bjo} in the analytic case. One has the following classical theorem, see \cite{Mal, Kas, Ka}:

\begin{theorem}  Let $f:X\ra \bC$ be a regular function on a smooth complex affine irreducible algebraic variety, or the germ at $x\in X$ of a holomorphic function on a complex manifold, such that $f$ is not invertible. Let $b_f(s)\in\bC[s]$ be the Bernstein-Sato polynomial of $f$. Then:

(i) (Malgrange, Kashiwara)  The set
$$
\{exp(2\pi i\al)\mid \al\text{ is a root of }b_f(s)\}
$$
is the set of monodromy eigenvalues on the nearby cycles complex of $f$.

(ii) (Kashiwara) The roots of $b_f(s)$ are negative rational numbers.

(iii) (Monodromy Theorem) The monodromy eigenvalues on the nearby cycles complex of $f$ are roots of unity.
\end{theorem}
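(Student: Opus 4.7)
\smallskip
\noindent\emph{Sketch of proof.}
The plan is to reduce all three statements to properties of the Kashiwara-Malgrange $V$-filtration along $t=0$ on the $\sD$-module associated to $f$ via graph embedding. Let $i_f:X\hookrightarrow X\times\bC$ send $x$ to $(x,f(x))$, and set $N:=(i_f)_+\sO_X$, a coherent holonomic $\sD_{X\times\bC}$-module, cyclically generated by a distinguished section $\delta$ obtained as the pushforward of $1$. The operator $s:=-\pa_t t$ makes $N$ into a $\sD_X[s]$-module in which $\delta$ plays the role of $f^s$; the defining relation (\ref{eqBF}) becomes $b_f(s)\,\delta\in t\cdot\sD_X[s]\,\delta$, so $b_f$ is the minimal polynomial of $s$ acting on the cyclic quotient $\sD_X[s]\,\delta/(t\cdot\sD_X[s]\,\delta)$. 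The theorems of Bernstein and Bj\"ork ensure $b_f\neq 0$.

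For (i), I would invoke the $V$-filtration $V^\bullet N$ along $t=0$: the unique good $\bQ$-indexed filtration with $t\cdot V^\al\subset V^{\al+1}$, $\pa_t\cdot V^\al\subset V^{\al-1}$, and $s-\al$ nilpotent on $\textup{gr}^\al_V N$. Kashiwara's theorem identifies
\[
\psi_f\sO_X \;\simeq\; \bigoplus_{0\le \al<1}\textup{gr}^\al_V N
\]
under Riemann-Hilbert, with monodromy realized by $\exp(-2\pi i s)$. A short check shows that the minimal polynomial of $s$ on the cyclic quotient defining $b_f$ has the same roots modulo $\bZ$ as the set of $\al$ with $\textup{gr}^\al_V N\neq 0$. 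Hence $\{\exp(2\pi i\al):b_f(\al)=0\}$ equals exactly the set of monodromy eigenvalues on $\psi_f\sO_X$, proving~(i).

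For (ii), I would use embedded resolution of singularities: take $\pi:Y\ra X$ proper birational with $\pi^{-1}(f^{-1}(0))$ contained in a simple normal crossings divisor. In local coordinates where $\pi^*f=u\prod_j y_j^{m_j}$ with $u$ a unit, the $V$-filtration of the pulled-back module is computed by hand and its $s$-eigenvalues have the form $-(k+1)/m_j$ with $k\in\bN$, manifestly negative rationals. Compatibility of the $V$-filtration with proper direct image propagates the rationality down to $X$. Statement (iii) then follows immediately from (i) and (ii), since roots of unity are exactly the values $\exp(2\pi i\al)$ with $\al\in\bQ$.

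The main obstacle is the rationality step~(ii): controlling the $V$-filtration under a proper birational pushforward is delicate, because the image of a $V$-filtration need not coincide with the $V$-filtration upstairs. Either Kashiwara's original weight-filtration analysis or, more conceptually, Saito's formalism of mixed Hodge modules is required to establish the necessary strictness. Once this is available, (i) is a formal unpacking of the $V$-filtration machinery and (iii) is an immediate corollary.
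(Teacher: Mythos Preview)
The paper does not prove this theorem. It is stated in the introduction as a classical result, with attributions to Malgrange and Kashiwara and citations to \cite{Mal, Kas, Ka}; the paper's own contribution is the multivariable extension (Theorems \ref{thrmE} and \ref{thmconj}), not a reproof of the $r=1$ case. So there is no ``paper's own proof'' to compare against.

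That said, your sketch is a faithful outline of the standard approach. The graph-embedding identification of $b_f$ as the minimal polynomial of $s=-\pa_t t$ on $\sD_X[s]\delta/t\sD_X[s]\delta$, the Kashiwara description of $\psi_f$ via graded pieces of the $V$-filtration, and the resolution-of-singularities argument for rationality are exactly the ingredients in \cite{Mal, Kas, Ka}. Two minor points: Kashiwara's original proof of (ii) in \cite{Ka} is more elementary than you suggest---it does not require Saito's theory, which came later, and proceeds by a direct comparison of $b$-functions under a birational map rather than via strictness of $V$-filtrations under pushforward; and your sketch addresses rationality but not quite negativity, which needs the additional observation that on the resolution the local $b$-functions have only strictly negative roots and that the global $b_f$ divides a product of shifts of these.
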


The definition of the nearby cycles complex is recalled in Section \ref{secProof}. In the algebraic case, $b_f(s)$ provides thus an algebraic computation of the monodromy eigenvalues.

\subsection{} We complete in this article the extension of this theorem to a finite collection of functions as follows.
Let $$Z(B_F)\subset\bC^r$$ be the zero locus of the Bernstein-Sato ideal of $F$. Let $\psi_F\bC_X$ be the specialization complex\footnote{This is called ``le complexe d'Alexander" in \cite{Sabb}.} defined by Sabbah \cite{Sabb}; the definition will be recalled in Section \ref{secProof}. This complex is a generalization of the nearby cycles complex to a finite collection of functions, the monodromy action being now given by $r$ simultaneous monodromy actions, one for each function $f_i$. Let $$\cS(F)\subset (\bC^*)^r$$  be the support of this monodromy action on $\psi_F\bC_X$. In the case $r=1$, this is the set of eigenvalues of the monodromy on the nearby cycles complex. The support $\cS(F)$ has a few other topological interpretations, one being in terms of cohomology support loci of rank one local systems, see Section \ref{secProof}. Let $\textup{Exp}:\bC^r\ra (\bC^*)^r$ be the map $\textup{Exp}(\_)=\exp(2\pi i\_)$. 
 

\begin{theorem}\label{thrmE} Let $F=(f_1,\ldots,f_r):X\ra\bC^r$ be a morphism of smooth complex affine irreducible algebraic varieties, or the germ at $x\in X$ of a holomorphic map on a complex manifold, such that not all $f_i$ are invertible. Then:

(i) $\textup{Exp}(Z(B_F))=\cS(F).$ 

(ii) Every irreducible component of $Z(B_F)$ of codimension 1 is a  hyperplane of type $a_1s_1+\ldots+a_rs_r+b=0$ with $a_i\in\bQ_{\ge 0}$ and $b\in\bQ_{>0}$. Every irreducible component of  $Z(B_F)$ of codimension $>1$  can be translated by an element of $\bZ^r$ inside a component of codimension 1.

(iii) $\cS(F)$ is a finite union of torsion-translated complex affine subtori of codimension 1 in $(\bC^*)^r$.

\end{theorem}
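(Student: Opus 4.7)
\medskip

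\noindent\textbf{Proof strategy.} The three statements will be tied together through a single $\sD$-theoretic bridge: Sabbah's relative $V$-filtration on the $\sD_X[\bs]$-module $\cM=\sD_X[\bs]\bffs$. On one hand, $Z(B_F)$ is by definition the support of the $\bC[\bs]$-module $\cM/\cM\cdot f_1\cdots f_r$, so it is read off from the jumps of the $V$-filtration along the anti-diagonal $\bs\mapsto\bs+\mathbf{1}$. On the other hand, the specialization complex $\psi_F\bC_X$ decomposes, after graph embedding $X\hookrightarrow X\times\bC^r$, into generalized eigenspaces of the $r$ commuting monodromies, and the characters that appear are precisely $\Exp$ of the spectrum of the graded pieces $\gr^V\cM$. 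Thus both $\Exp(Z(B_F))$ and $\cS(F)$ are expressible via $\gr^V\cM$, and (i) amounts to the identification of two filtrations built from this relative $V$-filtration.

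\medskip

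\noindent\textbf{Part (i).} The inclusion $\cS(F)\subseteq \Exp(Z(B_F))$ is the easier direction, and should follow by the standard route used for $r=1$: an eigenvector of the monodromy at character $\bla$ lifts to a nonzero class in $\gr^V_{\bal}\cM$ for some $\bal$ with $\Exp(\bal)=\bla$, and any such class witnesses an element of $B_F$ vanishing at $\bal$. For the reverse inclusion $\Exp(Z(B_F))\subseteq\cS(F)$, which is the main new content of the conjecture, the plan is to pass to an embedded resolution $\pi:Y\to X$ of $f_1\cdots f_r$, where locally $f_i\circ\pi$ is a monomial. On $Y$ both objects become computable: the relative $V$-filtration of the monomial case is controlled by the multiplicities of $\pi$, and $\psi_F\bC_Y$ reduces to a direct sum of explicit local systems on exceptional strata. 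One then shows by direct comparison that the support of $\gr^V$ downstairs is no larger than the support of the Leray pushforward of the monodromy data from $Y$, i.e.\ than $\cS(F)$. This comparison is the main obstacle: one must prove that every zero of the relative $b$-ideal already comes from a monodromy contribution on some stratum, and this is not automatic because the $V$-filtration now varies along an $r$-dimensional spectrum and its ``residues'' along $Z(B_F)$ are $(r-1)$-dimensional cycles rather than scalars, so the residue-map argument of Malgrange--Kashiwara has to be upgraded to a family of modules over $Z(B_F)$.

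\medskip

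\noindent\textbf{Parts (iii) and (ii).} For (iii), identify $\cS(F)$ as a union, over strata $S$ of a Whitney stratification of $F^{-1}(0)$, of cohomology jump loci of rank-one local systems on the complement $U=X\setminus F^{-1}(0)$ pulled back from a torus cover. The Budur--Wang structure theorem (generalizing Simpson and Arapura) then forces each piece to be a finite union of torsion translates of complex affine subtori of $(\bC^*)^r$; the codimension-1 bound comes from the fact that $\supp\psi_F\bC_X\subseteq f_1\cdots f_r=0$ is of codimension $1$ in $X$, combined with an Artin-type vanishing on the complement. Part (ii) is then a consequence of (i) and (iii): a codimension-1 component of $Z(B_F)$ is mapped by $\Exp$ into a codimension-1 torsion-translated subtorus $\prod \lambda_i^{a_i}=\zeta$, with $a_i\in\bZ$; lifting back yields an affine hyperplane $\sum a_i s_i=b$, and the positivity $a_i\in\bQ_{\ge 0}$, $b\in\bQ_{>0}$ is extracted from the discrepancy/multiplicity data on the resolution exactly as in Kashiwara's original positivity argument. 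Finally, a higher-codimension component $W\subseteq Z(B_F)$ satisfies $\Exp(W)\subseteq \cS(F)$, and since $\cS(F)$ is a union of codimension-1 torsion translates, some integer translate of $W$ lies in a codimension-1 component of $Z(B_F)$, which gives the last clause.
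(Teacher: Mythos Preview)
Your outline for the hard inclusion $\Exp(Z(B_F))\subseteq\cS(F)$ in (i) has a genuine gap, one you yourself flag as ``the main obstacle'' without resolving. Passing to an embedded resolution $\pi:Y\to X$ makes the topological side $\psi_F\bC_X$ computable via proper base change, but it does \emph{not} control $B_F$: the $\sD_Y[\bs]$-module $\sD_Y[\bs](\bf\circ\pi)^{\bs}$ does not push forward to $\sD_X[\bs]\bf^{\bs}$, and the Bernstein--Sato ideal on $X$ is not recoverable from the monomial data on $Y$. This is precisely why the Malgrange--Kashiwara argument does not extend directly to $r>1$, and your proposed ``upgrade to a family of modules over $Z(B_F)$'' is a restatement of the problem, not a solution. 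No relative $V$-filtration argument of the type you sketch is known to close this gap.

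The paper's route is entirely different and does not use resolution at all. One works with $M=\sD_X[\bs]\bf^{\bs}/\sD_X[\bs]\bf^{\bs+\mathbf 1}$ as a module over $\sA=\sD_X[\bs]$. By Maisonobe, $M$ is \emph{relative holonomic} with $j(M)=n+1$. The key new input is a Nakayama-type statement (Proposition~\ref{prop:maincm}): if an $\sA_R$-module $N$ is relative holonomic and $(n+l)$-Cohen--Macaulay, then $\bal\in Z(B_N)$ iff $N\otimes_R\bC_{\bal}\neq 0$. One shows (Lemma~\ref{lemGENE}) that over a dense open $\Spec R\subset\bC^r$ meeting every codimension-one component of $Z(B_F)$, the localization $M\otimes_{\bC[\bs]}R$ is $(n+1)$-Cohen--Macaulay; hence $M\otimes\bC_{\bal}\neq 0$ for generic $\bal$ on each such component, and then Proposition~\ref{propBS} (a direct consequence of the identification $\cS(F)=\{\bal: j_!\cM_{\bla}\to j_*\cM_{\bla}\text{ not iso}\}$) gives $\Exp(\bal)\in\cS(F)$.

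Note also that your logical ordering is reversed relative to the paper. The paper does \emph{not} derive the second clause of (ii) from (i) and (iii); rather, Maisonobe's result that every higher-codimension component of $Z(B_F)$ is a $\bZ^r$-translate into a codimension-one component is an \emph{input} to the proof of (i): the Cohen--Macaulay argument only handles generic points of the codimension-one components, and one needs Maisonobe's structural result to conclude for all of $Z(B_F)$. Your deduction of the second clause of (ii) from (i)$+$(iii)$+$Sabbah--Gyoja is formally correct, but it presupposes an independent proof of (i), which you have not supplied. Parts (iii) and the first clause of (ii) are, as you say, prior work (\cite{BLSW}, \cite{Sab}, \cite{G}).
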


Thus in the algebraic case, $B_F$  gives an algebraic computation of $\cS(F)$. 

Part $(i)$ was conjectured in \cite{B-ls}, where one inclusion was also proved, namely that $\textup{Exp}(Z(B_F))$ contains $\cS(F)$. See also \cite[Conjecture 1.4, Remark 2.8]{BLSW}.

Regarding part $(iii)$, Sabbah \cite{Sabb} showed that $\cS(F)$ is included in a finite union of torsion-translated complex affine subtori of codimension 1. Here a complex affine subtorus of $(\bC^*)^r$ means an algebraic subgroup $G\subset (\bC^*)^r$ such that $G\cong (\bC^*)^p$ as algebraic groups for some $0\le p\le r$. In \cite{BW16}, it was proven that every irreducible component of $\cS(F)$ is a torsion-translated subtorus. Finally, part $(iii)$ was proven as stated in \cite{BLSW}. 

The first assertion of part $(ii)$, about the components of codimension one of $Z(B_F)$, is due to Sabbah \cite{Sab} and Gyoja \cite{G}. 

In light of the conjectured equality in part $(i)$, it was therefore expected that part $(iii)$ would hold for $\Exp(Z(B_F))$. This is  equivalent to the second assertion in part $(ii)$, about the smaller-dimensional components of $Z(B_F)$, and it was  
 confirmed unconditionally by Maisonobe \cite[R\'esultat 3]{M}. This result of Maisonobe will play a crucial role in this article.

In this article we complete the proof of  Theorem \ref{thrmE} by proving the other inclusion from part $(i)$:

\begin{theorem}\label{thmconj}  Let  $F$ be as in Theorem \ref{thrmE}. Then $\textup{Exp}(Z(B_F))$ is contained in $\cS(F)$.
\end{theorem}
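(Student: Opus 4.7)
The plan is to cross over to the $\sD$-module side via the graph embedding of $F$, produce nonzero generalized eigenvectors in appropriate pieces of a multi-$V$-filtration, and translate these into eigenvectors of the simultaneous monodromy on $\psi_F\bC_X$ through Sabbah's Riemann-Hilbert type correspondence. As a first reduction, by Maisonobe's structural result \cite[R\'esultat 3]{M} every irreducible component of $Z(B_F)$ of codimension greater than one admits a $\bZ^r$-translate contained in some codimension-one hyperplane $H \subseteq Z(B_F)$; since $\textup{Exp}$ is $\bZ^r$-periodic, it suffices to prove $\textup{Exp}(H) \subseteq \cS(F)$ for every such $H$, which by Sabbah-Gyoja has the form $\sum a_i s_i + b = 0$ with $a_i \in \bQ_{\ge 0}$, $b \in \bQ_{>0}$.

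Let $i_F : X \hookrightarrow X \times \bC^r$ be the graph embedding and set $\mathcal{B}_F := (i_F)_+\sO_X$, a holonomic $\sD_{X\times\bC^r}$-module carrying a canonical multi-$V$-filtration $V^\bullet$ along $\{t_1 = \cdots = t_r = 0\}$ indexed by $\bR^r$, whose graded pieces $\textup{gr}^V\mathcal{B}_F$ decompose into generalized eigenspaces for the commuting operators $-\pa_{t_i}t_i$. Sabbah's construction identifies, via Riemann-Hilbert, the $\bal$-generalized eigenspace of $\textup{gr}^V\mathcal{B}_F$ with a summand of $\psi_F\bC_X$ on which the joint monodromy $(T_1,\ldots,T_r)$ acts with eigenvalue $\textup{Exp}(\bal)$. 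Hence to prove $\textup{Exp}(\bal) \in \cS(F)$ it suffices to exhibit a nonzero $\bal$-generalized eigenvector in some $\textup{gr}^V\mathcal{B}_F$. The core observation is that $B_F$ is, up to a shift, the annihilator of $\bf^\bs \bmod \prod_i f_i \cdot \bf^\bs$ in the relative holonomic module $\cM = \sD_X[\bs]\cdot \bf^\bs$, and via the graph embedding this ideal controls the jumps of $V^\bullet$ across $V^{\mathbf{0}}$. For $\bal$ generic on a codimension-one component $H$, combining the structure theory of relative holonomic modules from \cite{BLSW} with Maisonobe's classification \cite{M} should upgrade the vanishing $b(\bal)=0$ for every $b \in B_F$ into an honest nonzero $\bal$-generalized eigenvector in the appropriate multi-graded piece; this yields $\textup{Exp}(\bal) \in \cS(F)$ on a Zariski-dense subset of $H$, and hence $\textup{Exp}(H) \subseteq \cS(F)$ by closedness of $\cS(F)$ (part (iii) of Theorem \ref{thrmE}).

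The main obstacle is precisely this last step. Unlike the classical single-variable case, where the roots of $b_f(s)$ correspond directly to $V$-filtration indices of a cyclic generator and nonvanishing of the corresponding graded piece is automatic, in the multi-variable setting $\textup{gr}^V\mathcal{B}_F$ is a much richer $\bC[\bs]$-module, and a codimension-one component of $Z(B_F)$ could a priori be detected only through a rank jump rather than through a genuine nonzero generalized eigenvector of the commuting operators $-\pa_{t_i}t_i$. The delicate part is therefore to show that codim-one components of $Z(B_F)$ produce actual torsion at the correct multi-degree; accomplishing this seems to require an interplay between relative holonomicity, the explicit algebraic structure of $B_F$, and Maisonobe's decomposition of $\cM$, and is where I expect the bulk of the technical work to lie.
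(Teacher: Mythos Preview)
Your reduction to codimension-one hyperplane components via Maisonobe is correct and matches the paper, and you have correctly isolated the real obstacle: $\bal\in Z(B_F)$ says only that the $\bC[\bs]$-annihilator of $M=\sD_X[\bs]\bf^\bs/\sD_X[\bs]\bf^{\bs+\mathbf 1}$ vanishes at $\bal$, and since $M$ is not finitely generated over $\bC[\bs]$ there is no Nakayama lemma to deduce $M\otimes_{\bC[\bs]}\bC_\bal\ne 0$. But your proposed resolution through the graph embedding, a multi-$V$-filtration on $(i_F)_+\sO_X$, and an identification of its graded pieces with monodromy-eigenspace summands of $\psi_F\bC_X$ is not carried out; you say so yourself. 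More seriously, there is no off-the-shelf multivariable $V$-filtration theory that packages things this cleanly: Sabbah's specialization complex is not built as the graded of a canonical $\bR^r$-indexed $V$-filtration, and the link you assert between $B_F$ and jumps of such a filtration is precisely the point at issue, not an input. So as written the proposal is a plan with its central step missing.

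The paper takes a different route and never touches a multi-$V$-filtration. It proves the Nakayama-type statement for $M$ directly by homological algebra over the Auslander regular ring $\sA_R=\sD_X\otimes_\bC R$. Maisonobe gives that $M$ is relative holonomic over $\bC[\bs]$ with grade $j(M)=n+1$; the paper then shows (Lemma~\ref{lemGENE}) that over a suitable open affine $\Spec R\subset\bC^r$ meeting every codimension-one component of $Z(B_F)$, the localization $M\otimes_{\bC[\bs]}R$ is $(n+1)$-\emph{Cohen--Macaulay}, i.e.\ $\Ext^k_{\sA_R}(M\otimes R,\sA_R)=0$ for all $k\ne n+1$. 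The key technical input (Proposition~\ref{prop:maincm}) is that for any relative holonomic $(n+l)$-Cohen--Macaulay $\sA_R$-module $N$ one has $\bal\in Z(B_N)\iff N\otimes_R\bC_\bal\ne 0$; this is proved by induction, cutting with a generic linear form $b$ through $\bal$ and using purity to make $b$ act injectively on $N$, on $\gr N$, and on $\Ext^{n+l}_{\sA_R}(N,\sA_R)$, so that $N/bN$ stays relative holonomic and Cohen--Macaulay of the same grade over $\sA_{R/(b)}$. Combining this with Proposition~\ref{propBS} (nonvanishing of $M\otimes_{\bC[\bs]}\bC_\bal$ implies $\Exp(\bal)\in\cS(F)$) gives $\Exp(\bal)\in\cS(F)$ for generic $\bal$ on each codimension-one component, and closedness of $\cS(F)$ plus Maisonobe's translation result finish. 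The Cohen--Macaulay mechanism is the substitute for Nakayama that your sketch is missing.
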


The proof uses Maisonobe's results from \cite{M} and uses an analog of the Cohen-Macaulay property for modules over the noncommutative ring $\sD_X[s_1,\ldots,s_r]$.

\subsection{} Algorithms for computing Bernstein-Sato ideals are now implemented in many computer algebra systems. The availability of examples where the zero loci of Bernstein-Sato ideals contain irreducible components of codimension $>1$ suggests that this is not a rare phenomenon, see \cite{BO}. The stronger conjecture  that Bernstein-Sato ideals are generated by products of linear polynomials remains open, \cite[Conjecture 1]{B-ls}. This would imply in particular that all irreducible components of $Z(B_F)$ are linear.

\subsection{} In Section \ref{secProof}, we  recall the definition and some properties of the support of the specialization complex. In Section \ref{secFP} we give the proof of Theorem \ref{thmconj}. Section \ref{secApp} is an appendix reviewing basic facts from homological algebra for modules over not-necessarily commutative rings. 

\subsection{Acknowledgement.} We would like to thank L. Ma, P. Maisonobe, C. Sabbah for some discussions, to M. Musta\c{t}\u{a} for drawing our attention to a mistake in the first version of this article, and to the referees for comments that helped improve the article.

The first author was partly supported by the grants STRT/13/005 and Methusalem METH/15/026 from KU Leuven, G097819N and G0F4216N from the Research Foundation - Flanders. The second author is supported by a PhD Fellowship of the Research Foundation - Flanders. The fourth author is supported by the Simons Postdoctoral Fellowship as part of the Simons Collaboration on HMS.

\section{The support of the specialization complex}\label{secProof}
 
\subsection{Notation}   Let $F=(f_1,\ldots,f_r):X\ra\bC^r$ be  a holomorphic map on a complex manifold $X$ of dimension $n>0$.  Let $f=\prod_{i=1}^rf_i$, $D=f^{-1}(0)$, $U=X\setminus D$. Let $i:D\ra X$ be the closed embedding and $j:U\ra X$ the open embedding. We are assuming that not all $f_i$ are invertible, which is equivalent to $D\neq\emptyset$.

We  use the notation $\bs=(s_1,\ldots,s_r)$ and $\bf^\bs=\prod_{i=1}^rf_i^{s_i}$, and in general tuples of numbers will be in bold, e.g. $\mathbf{1}=(1,\ldots, 1)$, $\bal=(\al_1,\ldots,\al_r)$, etc. 


\subsection{Specialization complex}\label{subsDSF}  Let $D^b_c(A_D)$ be the derived category of bounded complexes of $A_D$-modules with constructible cohomology, where $A$ is the affine coordinate ring of $(\bC^*)^r$ and $A_D$ is the constant sheaf of rings on $D$ with stalks $A$. Sabbah \cite{Sabb} defined the {\it specialization complex} $\psi_F\bC_X$ in $D^b_c(A_D)$
by
$$
\psi_F\bC_X=i^{-1}Rj_*R\pi_! (j\circ \pi)^{-1}\bC_X,$$
where $\pi:U\times_{(\bC^*)^r}\bC^r\ra U$ is the first projection from the fibered product obtained from $F_{|U}:U\ra (\bC^*)^r$ and the universal covering map $exp:\bC^r\ra(\bC^*)^r$. 

The {\it support of the specialization complex} $\cS(F)$ is defined as the union over all $i\in \bZ$ and $x\in D$ of the supports in $(\bC^*)^r$ of the cohomology stalks $\mathcal{H}^i(\psi_F\bC_X)_x$ viewed as finitely generated $A$-modules.

If $F$ is only given as the germ at a point $x\in X$ of a holomorphic map, by $\psi_F\bC_X$ we mean the restriction of the specialization complex to a very small open neighboorhood of $x\in X$.

When $r=1$, that is,  in the case of only one holomorphic function $f:X\ra \bC$, the specialization complex  equals the shift by $[-1]$ of Deligne's {\it nearby cycles complex} defined as
$$\psi_f\bC_X=i^{-1}R(j\circ \pi)_* (j\circ \pi)^{-1}\bC_X.$$
The complex numbers in the support $\cS(f)\subset \bC^*$ are called the {\it monodromy eigenvalues} of the nearby cycles complex of $f$.

\subsection{Cohomology support loci}\label{subDTE} It was proven in \cite{B-ls, BLSW} that $\cS(F)$ admits an equivalent definition, without involving derived categories,  as the union of cohomology support loci of rank one local systems on small ball complements along the divisor $D$.  More precisely,
$$
\cS(F)=\{\bla\in (\bC^*)^r\mid H^i(U_x,L_\bla)\ne 0 \text{ for some }x\in D\text{ and }i\in \bZ\},
$$ 
where $U_x$ is the intersection of $U$ with a very small open ball in $X$ centered at $x$, and $L_\bla$ is the rank one $\bC$-local system on $U$ obtained as the pullback via $F:U\ra(\bC^*)^r$ of the rank one local system on $(\bC^*)^r$ with monodromy $\lambda_i$
 around the $i$-th missing coordinate hyperplane. 
 
 If $F$ is only given as the germ at $(X,x)$ of a holomorphic map, $\cS(F)$ is defined as above by replacing $X$ with a very small open neighboorhood of $x$.

For one holomorphic function $f:X\ra\bC$, the support $\cS(f)$ is the union of the sets of eigenvalues of the monodromy acting on cohomologies of the Milnor fibers of $f$ along points of the divisor $f=0$, see \cite[Proposition 1.3]{BW16}.

With this description of $\cS(F)$, the following involutivity property was proven:

\begin{lemma}\label{lemI} (\cite[Theorem 1.2]{BW16}) Let $\bla\in (\bC^*)^r$. Then
$\bla\in \cS(F)$ if and only if $\bla^{-1}\in\cS(F)$.
\end{lemma}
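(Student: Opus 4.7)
The strategy is to combine Poincar\'e--Verdier duality on the smooth manifold $U_x$ with a self-duality of Sabbah's specialization complex in the relative derived category of $A_D$-modules. For any $x\in D$, choose a small ball $B_x$ so that $U_x=B_x\cap U$ has the homotopy type of a finite CW complex (via Lojasiewicz triangulation of the real analytic pair $(B_x,D\cap B_x)$). Since $U_x$ is an oriented real $2n$-manifold and $L_\bla^\vee=L_{\bla^{-1}}$ for a rank-one local system, Poincar\'e--Verdier duality produces
$$ H^{2n-i}_c(U_x, L_\bla)\;\cong\; H^i(U_x, L_{\bla^{-1}})^\vee \qquad (\star) $$
for each $i\in\bZ$. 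Defining $\cS_c(F)$ analogously to $\cS(F)$ using compactly supported cohomology, $(\star)$ summed over $x\in D$ yields $\cS_c(F)=\iota(\cS(F))$, where $\iota\colon(\bC^*)^r\to(\bC^*)^r$ is the inversion $\bla\mapsto\bla^{-1}$. The lemma is therefore equivalent to the equality $\cS(F)=\cS_c(F)$.

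To establish this equality, I would upgrade the pointwise duality $(\star)$ to a Verdier self-duality of the full specialization complex. Applying Verdier duality to Sabbah's formula $\psi_F\bC_X=i^{-1}Rj_*R\pi_!(j\circ\pi)^{-1}\bC_X$ via the exchange formulas $D_Di^{-1}=i^!D_X$, $D_XRj_*=Rj_!D_U$, and $D_UR\pi_!=R\pi_*D_{\tilde U}$, together with $D_X\bC_X=\bC_X[2n]$, one obtains a statement of the form
$$ D_D(\psi_F\bC_X)\;\cong\; \iota^*\psi_F\bC_X\,[2n] $$
in the derived category of $A_D$-modules on $D$, where the antipode twist $\iota^*$ on the $A$-structure arises because the $\bZ^r$-cover $\pi$ swaps $R\pi_!$ and $R\pi_*$ up to inversion of the deck action. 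Taking $A$-supports of stalkwise cohomology then converts this self-duality into $\cS(F)=\iota(\cS(F))$, which is the assertion.

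The main obstacle is two-fold. First, one must carefully track Verdier duality in the mixed $\bC$- and $A$-linear setting, verifying that the only modification produced upon dualizing each of $i^{-1}$, $Rj_*$, and $R\pi_!$ is precisely the antipode twist on $A$, with no spurious terms. Second, there is the subtle point that, for an arbitrary finitely generated $A$-module $M$, the $A$-support of $\Ext_A^*(M,A)$ may be strictly smaller than $\supp M$, so that Verdier duality could in principle shrink the $A$-support of the cohomology sheaves. In our situation this is controlled by the equidimensional, codimension-one structure of the $A$-supports of $\mathcal H^*(\psi_F\bC_X)_x$ established by Sabbah, which forces the relevant $A$-modules to behave as if Cohen--Macaulay and so preserves supports under $\Ext_A^*$; bridging this last point cleanly is the hard technical step.
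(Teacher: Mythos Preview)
The paper does not give its own proof of this lemma; it is quoted directly from \cite[Theorem 1.2]{BW16}. So the comparison is between your proposal and the simplest route available within the paper's framework.

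Your reduction via Poincar\'e--Verdier duality to the equality $\cS(F)=\cS_c(F)$ is correct, but your plan for establishing that equality --- dualizing the full $A$-linear specialization complex --- is self-admittedly incomplete, and the second obstacle you flag is real. Even granting Sabbah's inclusion of $\cS(F)$ in a codimension-one union of translated subtori, you do not know that each individual stalk module $\mathcal H^i(\psi_F\bC_X)_x$ has $A$-support that is pure of codimension one, nor that it is free of embedded primes; without something of that strength the passage from $\supp M$ to $\supp \Ext^*_A(M,A)$ can genuinely lose components. So the ``Cohen--Macaulay-like'' step is not justified as written.

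More importantly, you are working much harder than necessary. The paper also records, in \S\ref{subsSF}, the characterization
\[
\cS(F)=\{\bla\in(\bC^*)^r \mid j_!L_\bla[n]\to Rj_*L_\bla[n]\text{ is not an isomorphism}\}.
\]
Applying ordinary $\bC$-linear Verdier duality $D_X$ on $X$ to this map, and using $D_U(L_\bla[n])=L_{\bla^{-1}}[n]$ together with $D_X j_!=Rj_*D_U$ and $D_X Rj_*=j_!D_U$, one finds that $D_X$ sends the canonical map for $\bla$ to the canonical map for $\bla^{-1}$ (both restrict to the identity on $U$, which pins them down by adjunction). Since $D_X$ is an equivalence, one map is an isomorphism iff the other is, giving $\bla\in\cS(F)\Leftrightarrow\bla^{-1}\in\cS(F)$ in one line. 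This stays entirely in the $\bC$-linear world and sidesteps both of your obstacles.
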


\subsection{Non-simple extension loci}\label{subsSF} An equivalent definition of $\cS(F)$ was found by \cite[\S 1.4]{BLSW} as a locus of rank one local systems on $U$ with non-simple higher direct image in the category of perverse sheaves on $X$ : 
$$
\cS(F)=\left\{\bla\in (\bC^*)^r\mid \frac{Rj_*L_\bla[n]}{j_{!*}L_\bla[n]}\ne 0\right\},
$$
where $L_\bla$ is the rank one local system on $U$ as in \ref{subDTE}. This description is equivalent to
$$
\cS(F)=\left\{\bla\in (\bC^*)^r\mid j_!L_\bla[n]\ra Rj_*L_\bla[n]\text{ is not an isomorphism} \right\},
$$
the map being the natural one.

\subsection{$\sD$-module theoretic interpretation}  Recall that for $\bal\in\bC^r$, 
$$\sD_X[\bs]\bf^{\bs}$$
is the natural left $\sD_X[\bs]$-submodule of the free rank one $\sO_X[\bs,f^{-1}]$-module $\sO_X[\bs,f^{-1}]\cdot\bf^{\bs}$ generated by the symbol $\bf^{\bs}$. For $r=1$, see for example Walther \cite{Wal}.

We denote by $D^b_{rh}(\sD_X)$ the derived category of bounded complexes of regular holonomic $\sD_X$-modules. We denote by $\DR_X:D^b_{rh}(\sD_X)\ra D^b_c(\bC_X)$ the de Rham functor, an equivalence of categories. The following is a particular case of \cite[Theorem 1.3 and Corollary 5.5]{WZ}, see also \cite{BG}:

\begin{theorem}\label{lemERH} Let $F=(f_1,\ldots, f_r):X\ra\bC^r$ be a morphism from a smooth complex algebraic variety. Let $\bal\in\bC^r$ and $\bla=\exp(-2\pi i\bal)$. Let $L_\bla$ be the rank one local system on $U$ defined as in \ref{subDTE}, and let $\cM_\bla=L_\bla\otimes_\bC\sO_U$ the corresponding flat line bundle, so that 
$$
\DR_U(\cM_\bla)=L_\bla[n]
$$
as perverse sheaves on $U$. For every integer $k\gg \Vert\al\Vert$ and $\bk=(k,\ldots,k)\in\bZ^r$, there are natural quasi-isomorphisms in $D^b_{rh}(\sD_X)$
$$
\sD_X[\bs]\bf^{\bs+\bk}\otimes_{\bC[\bs]}\bC_\bal= j_!\cM_\bla,
$$
$$
\sD_X[\bs]\bf^{\bs-\bk}\otimes_{\bC[\bs]}\bC_\bal= j_*\cM_\bla,
$$
where $\bC_\bal$ is the residue field of $\bal$ in $\bC^r$.
\end{theorem}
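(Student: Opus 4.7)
Let $\bal \in Z(B_F)$. By Lemma \ref{lemI}, it suffices to show $\bla := \exp(-2\pi i\bal) \in \cS(F)$. The description of $\cS(F)$ in \S\ref{subsSF} together with the de Rham equivalence then reduces the problem to showing that the natural morphism
\be\label{eqPlanMap}
j_! \cM_\bla \longrightarrow j_* \cM_\bla
\ee
in $D^b_{rh}(\sD_X)$ fails to be a quasi-isomorphism. By Maisonobe's theorem (the second assertion of Theorem \ref{thrmE}(ii)), every irreducible component of $Z(B_F)$ of codimension $>1$ can be translated by some $\bi\in\bZ^r$ into a component of codimension $1$; since $\Exp$ is $\bZ^r$-periodic, we may replace $\bal$ by such a translate and assume $\bal$ lies on a codimension $1$ component of $Z(B_F)$.

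Fix $k \gg \|\bal\|$. By Theorem \ref{lemERH}, the morphism \eqref{eqPlanMap} is identified with the map
$$\sD_X[\bs]\bf^{\bs+\bk} \otimes^L_{\bC[\bs]} \bC_\bal \longrightarrow \sD_X[\bs]\bf^{\bs-\bk} \otimes^L_{\bC[\bs]} \bC_\bal$$
induced by the natural inclusion $\sD_X[\bs]\bf^{\bs+\bk} \hookrightarrow \sD_X[\bs]\bf^{\bs-\bk}$ inside $\sO_X[\bs,f^{-1}]\bf^\bs$. Its cone is $Q \otimes^L_{\bC[\bs]} \bC_\bal$, where
$$Q := \sD_X[\bs]\bf^{\bs-\bk}/\sD_X[\bs]\bf^{\bs+\bk}.$$
The task becomes showing this cone is non-zero in $D^b_{rh}(\sD_X)$.

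Consider the $\sD_X[\bs]$-module
$$N := \sD_X[\bs]\bf^\bs/\sD_X[\bs]\bf^{\bs+\mathbf{1}},$$
whose annihilator in $\bC[\bs]$ is, by definition, precisely $B_F$. Filter $Q$ by the submodules $\sD_X[\bs]\bf^{\bs+\bi}$ for $\bi$ decreasing from $\bk$ to $-\bk+\mathbf{1}$ in steps of $\mathbf{1}$. Each successive quotient is a $\bC[\bs]$-translate of $N$, so its $\bC[\bs]$-support is a translate of $Z(B_F)$; in particular, the successive quotient corresponding to the step $\bs+\mathbf{0} \to \bs+\mathbf{1}$ has $\bal$ on a codimension $1$ component of its support. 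Hence the argument reduces to the key non-vanishing
\be\label{eqKeyNonvan}
N \otimes^L_{\bC[\bs]} \bC_\bal \ne 0 \text{ in } D^b_{rh}(\sD_X) \text{ whenever } \bal \text{ lies on a codim $1$ component of } Z(B_F),
\ee
together with a diagram chase — by backward induction along the distinguished triangles of the filtration of $Q$ — that propagates this non-vanishing from that single successive quotient to $Q \otimes^L_{\bC[\bs]} \bC_\bal$ itself.

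The main obstacle is the non-vanishing \eqref{eqKeyNonvan}. Since $N$ is finitely generated over the noncommutative ring $\sD_X[\bs]$ rather than over $\bC[\bs]$, classical commutative Cohen-Macaulay theory does not apply directly to conclude that the $\bC[\bs]$-support of $N$ meets every codimension $1$ component of $V(\Ann_{\bC[\bs]} N) = Z(B_F)$. The plan is to invoke the analog of the Cohen-Macaulay property for the holonomic $\sD_X[\bs]$-module $N$ alluded to in the introduction — a purity statement built on Maisonobe's work — which forces every codimension $1$ component of $Z(B_F)$ to arise from an associated prime of $N$ over $\bC[\bs]$. Combined with the Koszul resolution of $\bC_\bal$, this produces a non-zero top Tor group $\Tor_{r-1}^{\bC[\bs]}(N,\bC_\bal)$ and yields \eqref{eqKeyNonvan}. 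Setting up the noncommutative homological-algebra framework for $\sD_X[\bs]$-modules recalled in Section \ref{secApp} and applying Maisonobe's results from \cite{M} is where the real work lies.
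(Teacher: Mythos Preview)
Your proposal does not address Theorem \ref{lemERH} at all. That theorem asserts specific quasi-isomorphisms between $\sD_X[\bs]\bf^{\bs\pm\bk}\otimes_{\bC[\bs]}\bC_\bal$ and $j_!\cM_\bla$, $j_*\cM_\bla$; in the paper it is a cited result (a particular case of \cite[Theorem 1.3 and Corollary 5.5]{WZ}, see also \cite{BG}) and is not proved there. What you have written is a proof plan for the main Theorem \ref{thmconj}, and in it you invoke Theorem \ref{lemERH} as a black-box input (``By Theorem \ref{lemERH}, the morphism \eqref{eqPlanMap} is identified with\ldots''). So no argument for the stated result is offered.

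If I read your text instead as a plan for Theorem \ref{thmconj}, it is in the right spirit but has real gaps compared with the paper. First, the ``diagram chase by backward induction along the distinguished triangles'' does not work: from $N\otimes^L_{\bC[\bs]}\bC_\bal\neq 0$ for one subquotient of a filtration you cannot infer $Q\otimes^L_{\bC[\bs]}\bC_\bal\neq 0$, since contributions along a triangle can cancel. The paper bypasses this entirely by proving the \emph{underived} non-vanishing $N\otimes_{\bC[\bs]}\bC_\bal\neq 0$ for \emph{generic} $\bal$ on each codimension-one component (Theorem \ref{thmGCz}), then applying right-exactness of tensor (Proposition \ref{propBS}) and the closedness of $\cS(F)$. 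Second, your mechanism for \eqref{eqKeyNonvan} --- an associated-prime argument producing $\Tor_{r-1}^{\bC[\bs]}(N,\bC_\bal)\neq 0$ --- is off: $N$ is not finitely generated over $\bC[\bs]$ (precisely the obstruction the paper flags after Proposition \ref{propBS}), and even formally a height-one associated prime would contribute to $\Tor_0$ and $\Tor_1$, not $\Tor_{r-1}$. The paper's substitute is to localize to an open $\Spec R\subset\bC^r$ where $N\otimes_{\bC[\bs]}R$ becomes relative holonomic and $(n{+}1)$-Cohen--Macaulay over $\sA_R$ (Lemma \ref{lemGENE}), and then to run the inductive slicing argument of Proposition \ref{prop:maincm} to obtain $N\otimes_R\bC_\bal\neq 0$ in degree $0$.
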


\begin{prop}\label{propERH} With $F$ as in Theorem \ref{lemERH},
$$
\cS(F) = \Exp \left\{ \bal\in\bC^r \mid \frac{\sD_X[\bs]\bf^{\bs-\bk}}{\sD_X[\bs]\bf^{\bs+\bk}}\otimes_{\bC[\bs]}\bC_\bal \ne 0 \text{ for all }k\gg \Vert\al\Vert \right \}.
$$
\end{prop}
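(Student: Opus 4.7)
The plan is to combine the cokernel description of $\cS(F)$ from Section \ref{subsSF} with the presentations of $j_!\cM_\bla$ and $j_*\cM_\bla$ supplied by Theorem \ref{lemERH}, and to correct the sign in the exponential at the end via the involutivity Lemma \ref{lemI}.

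First, I would recall from \ref{subsSF} that $\cS(F)$ consists of the $\bla \in (\bC^*)^r$ for which $Rj_*L_\bla[n]/j_{!*}L_\bla[n] \ne 0$. Since $j:U \hookrightarrow X$ is an affine open embedding, both $j_!L_\bla[n]$ and $Rj_*L_\bla[n]$ are perverse, and $j_{!*}L_\bla[n]$ is by definition the image of the canonical morphism between them; this quotient is therefore the perverse cokernel of $j_!L_\bla[n] \to Rj_*L_\bla[n]$. Riemann--Hilbert translates this to the cokernel of $j_!\cM_\bla \to j_*\cM_\bla$ in regular holonomic $\sD_X$-modules, so $\cS(F)$ is the locus where this $\sD_X$-module cokernel is nonzero.

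Next, fix $\bal \in \bC^r$, set $\bla = \exp(-2\pi i\bal)$, and choose $k \gg \|\al\|$ with $\bk = (k,\ldots,k)$. Theorem \ref{lemERH} provides natural isomorphisms $j_!\cM_\bla \simeq \sD_X[\bs]\bf^{\bs+\bk}\otimes_{\bC[\bs]}\bC_\bal$ and $j_*\cM_\bla \simeq \sD_X[\bs]\bf^{\bs-\bk}\otimes_{\bC[\bs]}\bC_\bal$. Since $\bf^{\bs+\bk} = f^{2k}\bf^{\bs-\bk}$ with $f = \prod_i f_i$, there is a tautological inclusion of $\sD_X[\bs]$-modules $\sD_X[\bs]\bf^{\bs+\bk}\hookrightarrow\sD_X[\bs]\bf^{\bs-\bk}$. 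I would argue that this inclusion, after $-\otimes_{\bC[\bs]}\bC_\bal$, agrees with the canonical map $j_!\cM_\bla \to j_*\cM_\bla$ under the above identifications. Right-exactness of $-\otimes_{\bC[\bs]}\bC_\bal$ then identifies the cokernel of $j_!\cM_\bla \to j_*\cM_\bla$ with $(\sD_X[\bs]\bf^{\bs-\bk}/\sD_X[\bs]\bf^{\bs+\bk})\otimes_{\bC[\bs]}\bC_\bal$, and this expression is independent of $k$ for $k \gg \|\al\|$.

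Assembling these observations, $\exp(-2\pi i\bal) \in \cS(F)$ if and only if $\bal$ lies in the set $T$ appearing on the right-hand side of the proposition, so $\cS(F) = \Exp(-T)$. Since $\Exp(-\bal) = \Exp(\bal)^{-1}$, we have $\Exp(-T) = \Exp(T)^{-1}$, and Lemma \ref{lemI} gives $\cS(F) = \cS(F)^{-1} = \Exp(T)$, completing the argument. The genuinely delicate step is the naturality claim identifying the tautological inclusion of $\sD_X[\bs]$-modules with the canonical map $j_!\cM_\bla \to j_*\cM_\bla$; this should follow from the construction of the identifications in \cite{WZ}, since all the relevant modules embed canonically inside $\sO_X[\bs,f^{-1}]\cdot\bf^{\bs}$ and specialize compatibly at $\bal$, but it is the one point where one must inspect the proof of Theorem \ref{lemERH} rather than simply quoting its statement.
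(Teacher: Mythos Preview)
Your proposal is correct and follows essentially the same approach as the paper: both combine the description of $\cS(F)$ from \ref{subsSF} with Theorem \ref{lemERH} and then correct the sign via Lemma \ref{lemI}. The only minor variation is that the paper first passes through the derived tensor product and then invokes the equal-length property of the holonomic map $j_!\cM_\bla \to j_*\cM_\bla$ to replace $\otimes^L$ by $\otimes$, whereas you bypass this by working directly with the cokernel description and right-exactness of the ordinary tensor product; both routes implicitly rely on the naturality claim you flag.
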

\begin{proof} Applying $\DR_X$ directly to Theorem \ref{lemERH}, one obtains that
$$
\cS(F) = \Exp \left\{ -\bal\in\bC^r \mid \frac{\sD_X[\bs]\bf^{\bs-\bk}}{\sD_X[\bs]\bf^{\bs+\bk}}\otimes^L_{\bC[\bs]}\bC_\bal \ne 0 \text{ in }D^b_{rh}(\sD_X)\text{ for all }k\gg \Vert\al\Vert \right \}
$$
by the interpretation of $\cS(F)$ from \ref{subsSF}.
Since $j_!\cM_\bla\ra j_*\cM_\bla$ is a morphism of holonomic $\sD_X$-modules of same length, the kernel and cokernel must simultaneously vanish or not. Thus, we can replace the derived tensor product with the usual tensor product. We then can replace $-\bal$ with $\bal$ by Lemma \ref{lemI}. 
\end{proof}

For related work in a particular case, see \cite{Bath}.

\begin{rmk} Note that Theorem \ref{lemERH} is stated in the algebraic case only. However, the proof from \cite{BG,WZ} extends to the   case when $X$ is a complex manifold by replacing $j_!\cM_\bla$, $j_*\cM_\bla$ with $\cM(!D)$, $\cM(*D)$, respectively, where $\cM$ is the analytic $\sD_X$-module $\sD_X\cdot \bf^\bal$ whose restriction to $U$ is $\cM_\bla$. Hence the last proposition also holds in the analytic case.
\end{rmk}

Since the tensor product is a right exact functor, as a consequence one has the following corollary which also follows from the proof of \cite[Proposition 1.7]{B-ls}:

\begin{prop}\label{propBS} 
If $\bal$ is in $\bC^r$ and 
$$\frac{\sD_X[\bs]\bf^{\bs}}{\sD_X[\bs]\bf^{\bs+\mathbf{1}}}\otimes_{\bC[\bs]}\bC_\bal \ne 0,$$
then $\Exp(\bal)$ is in $\cS(F)$.
\end{prop}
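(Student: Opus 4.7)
The plan is to derive Proposition \ref{propBS} as a quick consequence of Proposition \ref{propERH} together with the right exactness of $\otimes_{\bC[\bs]}\bC_\bal$. By Proposition \ref{propERH}, it suffices to produce some $k\gg\|\bal\|$ for which
\begin{equation*}
\left(\sD_X[\bs]\bf^{\bs-\bk}/\sD_X[\bs]\bf^{\bs+\bk}\right)\otimes_{\bC[\bs]}\bC_\bal\ne 0,
\end{equation*}
and by Theorem \ref{lemERH} this is equivalent to non-surjectivity of the natural map $j_!\cM_\bla\to j_*\cM_\bla$.

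The key device is the chain of $\sD_X[\bs]$-submodule inclusions $\sD_X[\bs]\bf^{\bs+\bk}\subseteq\sD_X[\bs]\bf^{\bs+\mathbf{1}}\subseteq\sD_X[\bs]\bf^\bs\subseteq\sD_X[\bs]\bf^{\bs-\bk}$, for $\bk=(k,\ldots,k)$ with $k\ge 1$. Applying right exactness of $\otimes\bC_\bal$ to the short exact sequence
\begin{equation*}
0\to\sD_X[\bs]\bf^{\bs+\mathbf{1}}/\sD_X[\bs]\bf^{\bs+\bk}\to\sD_X[\bs]\bf^{\bs}/\sD_X[\bs]\bf^{\bs+\bk}\to\sD_X[\bs]\bf^{\bs}/\sD_X[\bs]\bf^{\bs+\mathbf{1}}\to 0
\end{equation*}
produces a surjection on tensor products, so the nonzero hypothesis on the right forces $(\sD_X[\bs]\bf^\bs/\sD_X[\bs]\bf^{\bs+\bk})\otimes\bC_\bal\ne 0$ for every $\bk\ge\mathbf{1}$. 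Again by right exactness this identifies with the cokernel of $\sD_X[\bs]\bf^{\bs+\bk}\otimes\bC_\bal\to\sD_X[\bs]\bf^\bs\otimes\bC_\bal$, and via Theorem \ref{lemERH} the source becomes $j_!\cM_\bla$ for $k\gg\|\bal\|$. Hence the first factor of the factorization $j_!\cM_\bla\to\sD_X[\bs]\bf^\bs\otimes\bC_\bal\to j_*\cM_\bla$ of the natural map $j_!\cM_\bla\to j_*\cM_\bla$ fails to be surjective.

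The remaining step, and the main obstacle, is to bridge this partial non-surjectivity to non-surjectivity of the full composition $j_!\cM_\bla\to j_*\cM_\bla$: a priori an element of $\sD_X[\bs]\bf^\bs\otimes\bC_\bal$ outside the image of $j_!\cM_\bla$ could map into the kernel of the second arrow. To close this gap, I would use that both $j_!\cM_\bla$ and $j_*\cM_\bla$ are holonomic $\sD_X$-modules of the same finite length (as was already invoked in the proof of Proposition \ref{propERH}): the natural map between them is therefore an isomorphism iff it is surjective. If $\sD_X[\bs]\bf^\bs\otimes\bC_\bal\to j_*\cM_\bla$ fails to be surjective the conclusion is immediate by a quotient argument, while if it is surjective, the proper containment produced in the previous paragraph still forces the image of $j_!\cM_\bla$ in $j_*\cM_\bla$ to be a proper $\sD_X$-submodule, ruling out the isomorphism and yielding the required nonvanishing of the cokernel.
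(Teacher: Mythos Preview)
Your strategy matches the paper's brief indication (deduce the statement from Proposition~\ref{propERH} via right exactness), and the first half is correct: by right exactness the hypothesis forces the map
\[
j_!\cM_\bla=\sD_X[\bs]\bf^{\bs+\bk}\otimes_{\bC[\bs]}\bC_\bal\;\longrightarrow\;\sD_X[\bs]\bf^\bs\otimes_{\bC[\bs]}\bC_\bal=:B_0
\]
to have nonzero cokernel for every $k\gg\|\bal\|$.

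Your Case~2, however, has a genuine gap. You claim that if $B_0\to j_*\cM_\bla$ is surjective and the image $I_0$ of $j_!\cM_\bla$ in $B_0$ is a proper submodule, then the image of $j_!\cM_\bla$ in $j_*\cM_\bla$ is proper as well. This inference is invalid: a surjection can carry a proper submodule onto the whole target. To see that the argument cannot be completed with what you have, suppose the composite $j_!\cM_\bla\to j_*\cM_\bla$ \emph{were} an isomorphism. Since it factors through $B_0$, the first arrow is injective and the second surjective; setting $K_0=\ker(B_0\to j_*\cM_\bla)$, the equality of lengths gives $B_0=I_0\oplus K_0$. Nothing you have established rules out $K_0\ne 0$, and in that scenario $I_0\subsetneq B_0$ while $I_0$ still maps isomorphically onto $j_*\cM_\bla$---so no contradiction arises from your hypotheses alone.

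What is missing is control over $K_0$, for instance a proof that $B_0\to j_*\cM_\bla$ is injective (equivalently, that the evaluation kernel of $\sD_X[\bs]\bf^\bs\to\sO_X[f^{-1}]\bf^\bal$ is exactly $\fm_\bal\cdot\sD_X[\bs]\bf^\bs$). The paper does not supply this step either: it asserts the corollary and refers to the proof of \cite[Proposition~1.7]{B-ls} for an independent argument.
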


This proposition can be interpreted as to say that the difficulty in proving Theorem \ref{thmconj} is the lack of a Nakayama Lemma for the non-finitely generated $\bC[\bs]$-module $\sD_X[\bs]\bf^\bs/\sD_X[\bs]\bf^{\bs+\mathbf{1}}$.

\section{Relative holonomic modules}\label{secFP}

 In this section we will provide necessary conditions for modules over $\sD_X[\bs]$ to obey an analog of Nakayama Lemma, and we will see that $\sD_X[\bs]\bf^\bs/\sD_X[\bs]\bf^{\bs+\mathbf{1}}$ satisfies these conditions at least generically. Using Maisonobe's results \cite{M}, this will prove Theorem \ref{thmconj}.

\subsection{} For simplicity, we will assume from now that we are in the algebraic case, namely, $X$ is a smooth complex affine irreducible algebraic variety. We will treat the analytic case at the end. 

We define an increasing filtration on the ring $\sD_X$ by setting $F_i\sD_X$ to consist of all operators of order at most $i$, that is, in local coordinates $(x_1,\ldots,x_n)$ on $X$, the order of $x_i$ is zero and the order of $\partial/\partial {x_i}$ is one.

We let $R$ be a  regular commutative finitely generated $\bC$-algebra integral domain. We write
\[\sA_R=\sD_X\otimes_{\bC}R,\]
and if $R=\bC[\bs]$ we write
\[\sA=\sA_{\bC[\bs]}=\sD_X[\bs].\]
The order filtration on  $\sD_X$ induces the {\it relative filtration} on $\sA_R$ by 
\[F_i\sA_R=F_i\sD_X\otimes_\bC R.\]
The associated graded ring $$\gr\sA_R=\gr \sD_X\otimes_\bC R$$ is a regular commutative finitely generated $\bC$-algebra integral domain, and it corresponds to the structure sheaf of $T^*X\times \Spec R$, where $T^*X$ is the cotangent bundle of $X$. Thus $\sA_R$ is an Auslander regular ring by Theorem \ref{thrmGAAR}. Moreover, the homological dimension is equal to the Krull dimension of $\gr\sA_R$,
$$
\text{gl.dim}(\sA_R) = 2n+\dim(R),
$$
by Proposition \ref{eqGldim}, Proposition \ref{propJgr}, and Proposition \ref{propCo}.

\subsection{}  Let $ N$ be a left (or right) $\sA_R$-module. A {\it good filtration $F$ on  $N$ over $R$} is an exhaustive filtration compatible with the relative filtration on $\sA_R$ such that the associated graded module $\gr  N$ is finitely generated over $\gr\sA_R$, cf. \ref{subFlt}. If $ N$ is finitely generated over $\sA_R$, then good filtrations over $R$ exist on $N$.  We define the {\it relative characteristic variety of $ N$ over $R$} to be the support of $\gr  N$ inside $T^*X\times \Spec R$, denoted by $$\Chr( N).$$ 
Equivalently, $\Chr(N)$ is defined by the radical of the annihilator ideal of $\gr N$ in $\gr\sA_R$. The relative characteristic variety $\Chr( N)$ and the multiplicities $m_\mathfrak{p}(N)$ of $\gr  N$ at generic points $\mathfrak{p}$ of the irreducible components of $\Chr( N)$ do not depend on the choice of a good filtration for $ N$, by \ref{lem:relCh}.

\begin{rmk}\label{rmkF} 
The good filtration $F$ on $N$ localizes, that is, if $S$ is a multiplicatively closed subset of $R$, then 
$$F_i (S^{-1} N)=S^{-1}F_i  N$$
form a good filtration of $S^{-1}N$ over $S^{-1}R$, and hence 
\[ \gr (S^{-1} N)\simeq S^{-1}\gr  N.\] 
\end{rmk}

 For a finitely generated $\sA_R$-module $N$, we will denote by $j_{\sA_R}(N)$, or simply $j(N)$, the grade number of $N$ defined as in \ref{subGrade}.

\begin{lemma}\label{thm:relCh}
Suppose that $ N$ is a finitely generated $\sA_R$-module. Then:
\begin{itemize}
\item[(1)] $j( N)+\dim(\Chr( N))=2n+\dim(R);$
\item[(2)] if $$0\to  N'\to  N\to  N''\to 0$$ is a short exact sequence of finitely generated $\sA_R$-modules, then 
\[\Chr( N)=\Chr( N')\cup\Chr( N'')\]
and if $\mathfrak{p}$ is the generic point of an irreducible component of $\Chr( N)$ then \[m_\mathfrak{p}( N)=m_\mathfrak{p}( N')+m_\mathfrak{p}( N'').\]
\end{itemize}
\end{lemma}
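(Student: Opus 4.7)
The plan is to reduce both parts to the corresponding commutative statements about the finitely generated module $\gr N$ over the commutative regular ring $\gr\sA_R$, after fixing a good filtration $F$ on $N$ over $R$. The main technical point is not in the short-exact-sequence manipulation that follows, but in the justification that $\Chr(N)$, $m_\mathfrak{p}(N)$, and $j(N)$ are intrinsic invariants of $N$ and that the grade number is preserved by passage to the associated graded, $j_{\sA_R}(N) = j_{\gr\sA_R}(\gr N)$. Intrinsicality of $j(N)$ is automatic since it is defined via $\Ext^\bullet_{\sA_R}(N,\sA_R)$; the grade-preservation is the heart of the matter, and comes from the Auslander regularity of $\sA_R$ under the Zariskian filtration $F$, as packaged in the appendix. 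Intrinsicality of $\Chr(N)$ and $m_\mathfrak{p}(N)$ is the classical sandwich argument comparing any two good filtrations by shifts of each other, which forces the supports and the generic lengths of their associated gradeds to coincide.

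For (1), the ring $\gr\sA_R$ is the coordinate ring of the smooth affine variety $T^*X\times\Spec R$, hence a regular, Cohen-Macaulay and catenary integral domain of Krull dimension $2n+\dim R$. For any finitely generated nonzero module $M$ over such a ring, the dimension formula gives $j_{\gr\sA_R}(M)=\text{ht}(\Ann M)=\dim\gr\sA_R-\dim\supp M$. Applying this with $M=\gr N$, and using $\dim\Chr(N)=\dim\supp\gr N$ together with the grade-preservation equality from the first paragraph, yields $j(N)+\dim\Chr(N)=2n+\dim R$.

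For (2), induce filtrations on the end terms by $F_iN':=F_iN\cap N'$ and $F_iN'':=F_iN/F_iN'$. The first is good by a standard Artin-Rees argument for filtered Noetherian rings, the second is good tautologically, and together they give a short exact sequence $0\to\gr N'\to\gr N\to\gr N''\to 0$ of finitely generated $\gr\sA_R$-modules. Additivity of support for short exact sequences over a commutative ring gives $\Chr(N)=\Chr(N')\cup\Chr(N'')$ at once. Localizing the sequence at a generic point $\mathfrak{p}$ of an irreducible component of $\Chr(N)$ — which commutes with the associated graded by Remark \ref{rmkF} — produces a short exact sequence of finite-length modules over the Artinian local ring $(\gr\sA_R)_\mathfrak{p}$, and additivity of length then yields $m_\mathfrak{p}(N)=m_\mathfrak{p}(N')+m_\mathfrak{p}(N'')$.
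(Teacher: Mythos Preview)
Your proof is correct and is essentially an unpacking of the paper's own argument, which simply cites the appendix results (Propositions~\ref{propJgr} and~\ref{propCo} for part~(1), Proposition~\ref{lem:relCh} for part~(2)). One minor slip: the appeal to Remark~\ref{rmkF} in the multiplicity step is misplaced --- $\mathfrak{p}$ is a prime of $\gr\sA_R$, not of $R$ --- and unnecessary, since all you need there is exactness of localization of the already-graded short exact sequence over the commutative ring $\gr\sA_R$.
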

\begin{proof}
Propositions \ref{propJgr} and \ref{propCo} give (1). Proposition \ref{lem:relCh} gives (2). 
\end{proof}

Note that the lemma does not require, nor does it imply, that $\Chr(N)$ is equidimensional.

\begin{defn}\label{defRHL}
We say that a finitely generated $\sA_R$-module $ N$ is {\it relative holonomic over $R$} if its relative characteristic variety over $R$ is a finite union
\[\Chr( N) =\bigcup_{w}\Lambda_w \times S_w\]
where $\Lambda_w$ are irreducible conic Lagrangian subvarieties in $T^*X$ and $S_w$ are algebraic irreducible subvarieties of $\Spec R$. 
\end{defn}

\begin{lemma}\label{lm:relhol}
Suppose that $ N$ is relative holonomic over $R$. Then:
\begin{itemize}
\item[(1)] every nonzero subquotient of $ N$ is  relative holonomic over $R$;
\item[(2)] if $\Ext^j_{\sA_R}( N,\sA_R)\not=0$ for some integer $j$, then $\Ext^j_{\sA_R}( N,\sA_R)$ is  relative holonomic (as a right $\sA_R$-module if $ N$ is a left $\sA_R$-module and vice versa), and 
\[ \Chr(\Ext^j_{\sA_R}( N,\sA_R))\subset \Chr( N).\]
\end{itemize}
\end{lemma}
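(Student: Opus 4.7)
For part (1), let $N'\subset N$ be a submodule and set $N''=N/N'$. Applying Lemma~\ref{thm:relCh}(2) gives $\Chr(N)=\Chr(N')\cup \Chr(N'')$ together with additivity of multiplicities at the generic points of the irreducible components of $\Chr(N)$; in particular every component $\Lambda_w\times S_w$ of $\Chr(N)$ is an irreducible component of $\Chr(N')$ or of $\Chr(N'')$. The remaining task is to check that neither $\Chr(N')$ nor $\Chr(N'')$ has ``extra'' irreducible components failing to have the product form $\Lambda\times S$. My plan is to localize at the generic point $\eta$ of the projection $S=\pi_{\Spec R}(V)\subset \Spec R$ of a given component $V$. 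By Remark~\ref{rmkF}, taking $\gr$ commutes with such localization, so $N_\eta$ is a finitely generated $\sD_X\otimes_{\bC}k(\eta)$-module whose characteristic variety is $\bigcup_{w:\eta\in S_w}\Lambda_w$, hence classically holonomic. The induced subquotient $N'_\eta$ is therefore holonomic, so its characteristic variety is a union of conic Lagrangians contained in $\bigcup_{w:\eta\in S_w}\Lambda_w$. Since $V$ dominates the integral scheme $S$, the fiber $V_\eta$ is irreducible; being a conic Lagrangian of dimension $n$ inside a union of $n$-dimensional Lagrangians, it must equal one of them, say $\Lambda_{w_0}$. Taking closure back in $T^*X\times\Spec R$ yields $V=\Lambda_{w_0}\times S$, as required.

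For the inclusion in part (2), my plan is to choose a finite-rank free resolution $P_\bullet\twoheadrightarrow N$ over $\sA_R$ with good filtrations (possible by finite generation) and form the dual complex $\mathrm{Hom}_{\sA_R}(P_\bullet,\sA_R)$ of right $\sA_R$-modules. Standard filtered homological algebra (see the appendix) then equips $\Ext^j_{\sA_R}(N,\sA_R)$ with a good filtration whose associated graded module is a subquotient of $\Ext^j_{\gr\sA_R}(\gr N,\gr\sA_R)$. Since $\gr\sA_R$ is a commutative Noetherian regular ring and $\gr N$ is finitely generated, one has $\supp\Ext^j_{\gr\sA_R}(\gr N,\gr\sA_R)\subset \supp\gr N=\Chr(N)$; taking the support of the associated graded of $\Ext^j_{\sA_R}(N,\sA_R)$ therefore gives
\[
\Chr(\Ext^j_{\sA_R}(N,\sA_R))\subset \Chr(N).
\]

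To upgrade this containment to the relative holonomicity of $\Ext^j_{\sA_R}(N,\sA_R)$, I would rerun the localization argument of part (1) for $\Ext$ itself: Ext commutes with the flat base change to $k(\eta)$, producing $\Ext^j$ over the classically holonomic $\sD_X\otimes k(\eta)$-module $N_\eta$, which is classically holonomic with Lagrangian characteristic variety. Combined with the dominant-map / irreducible-generic-fiber argument from part (1), this forces every irreducible component of $\Chr(\Ext^j_{\sA_R}(N,\sA_R))$ to be a product $\Lambda\times S$. The main obstacle, common to both parts, is exactly this product-structure step: the containment in $\bigcup_w\Lambda_w\times S_w$ is automatic from the filtration argument, but upgrading an irreducible closed subvariety of such a union to a genuine product requires non-trivial input, supplied here by classical holonomicity on each generic fiber together with the fact that a dominant morphism of integral schemes has irreducible generic fiber.
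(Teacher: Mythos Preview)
Your containment arguments are correct and match the paper's: Lemma~\ref{thm:relCh}(2) gives $\Chr(N')\subset\Chr(N)$ for a subquotient $N'$, and Proposition~\ref{propExGr} gives $\Chr(\Ext^j_{\sA_R}(N,\sA_R))\subset\Chr(N)$.

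The gap is in the product-structure step, which you rightly flag as the crux. You write that localizing at the generic point $\eta$ of $S=\overline{p_2(V)}$ makes $N_\eta$ a module over $\sD_X\otimes_\bC k(\eta)$, but localization at a prime $\eta$ produces a module over the \emph{local ring} $R_\eta$, not over the residue field $k(\eta)$; these coincide only when $\eta$ is the generic point of $\Spec R$ itself. Remark~\ref{rmkF} says that $\gr$ commutes with localization, not with reduction modulo $\mathfrak m_\eta$. In general one only has a surjection $\gr(N')\otimes_R k(\eta)\twoheadrightarrow \gr(N'\otimes_R k(\eta))$, hence the inclusion $\Ch(N'\otimes_R k(\eta))\subset \Chr(N')_\eta$ goes the wrong way for your purposes: even if $N'\otimes_R k(\eta)$ is holonomic over $\sD_X\otimes_\bC k(\eta)$, the generic fibre $V_\eta$ of a component $V$ of $\Chr(N')$ need not appear as a component of $\Ch(N'\otimes_R k(\eta))$, and so you cannot conclude it is Lagrangian. (Controlling exactly when $\gr$ commutes with such a specialization is the content of~(\ref{eqNRT}) in the proof of Proposition~\ref{prop:maincm}, and even there it requires injectivity of a carefully chosen $b$ on $\gr N$.)

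The paper avoids this difficulty: once $\Chr(N')\subset\Chr(N)\subset\bigl(\bigcup_w\Lambda_w\bigr)\times\Spec R$, it invokes Proposition~\ref{propMaj}. Gabber's involutivity theorem forces every nonempty fibre of $\Chr(N')$ over $\Spec R$ to have dimension at least $n$, and the product decomposition follows directly from this together with the containment in $\Lambda\times\Spec R$. That fibre-dimension bound is precisely what your generic-fibre argument cannot extract from Remark~\ref{rmkF} alone.
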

\begin{proof}
By Proposition \ref{propExGr}, there exist good filtrations on $N$ and $\Ext^j_{\sA_R}( N,\sA_R)$ such that $\gr(\Ext^j_{\sA_R}( N,\sA_R))$ is a subquotient of $\Ext^j_{\gr{\sA_R}}(\gr N,\gr \sA_R)$. It follows that
\[ \Chr(\Ext^j_{\sA_R}( N,\sA_R))\subset \Chr( N).\]
Then part (2) follows from Proposition \ref{propMaj}. Part (1) is proved similarly, using Lemma \ref{thm:relCh} (2).
\end{proof}

The following is a straight-forward generalization of the algebraic case of \cite[Proposition 8]{M} where one replaces $\bC[\bs]$ by $R$:

\begin{prop}\label{propMaj} If $N$ is a finitely generated module over $\sA_R$ such that $\Chr(N)$ is contained in $\Lambda\times \Spec R$ for some conic Lagrangian, not necessarily irreducible, subvariety $\Lambda$ of $T^*X$, then $N$ is relative holonomic over $R$.
\end{prop}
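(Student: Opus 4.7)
The plan is to transcribe Maisonobe's proof of \cite[Proposition 8]{M} with $R$ in place of $\bC[\bs]$, since the argument depends only on the fact that $R$ lies in the Poisson center of $\gr\sA_R$, not on its particular form. The crucial input is a relative version of Gabber's involutivity theorem: because $R$ is central in $\sA_R=\sD_X\otimes_\bC R$, the relative filtration satisfies $[F_i\sA_R, F_j\sA_R]\subset F_{i+j-1}\sA_R$, and the associated graded $\gr\sA_R=\sO_{T^*X\times \Spec R}$ carries a Poisson bracket of degree $-1$ that restricts to the canonical symplectic bracket on $T^*X$ and has $R$ as its Poisson center. Gabber's theorem, applied to a good filtration on $N$ over $R$, then implies that each irreducible component of $\Chr(N)$ is coisotropic in $T^*X\times\Spec R$ for this Poisson structure.

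Given involutivity, I would fix an irreducible component $W$ of $\Chr(N)$. Decomposing $\Lambda=\bigcup_i \Lambda_i$ into irreducible conic Lagrangians, the irreducibility of $W$ together with $W\subset\bigcup_i(\Lambda_i\times\Spec R)$ forces $W\subset \Lambda_i\times\Spec R$ for some $i$. The goal reduces to showing $W=\Lambda_i\times S_w$ for an irreducible $S_w\subset \Spec R$. I would work in local symplectic coordinates on $T^*X$ in which $\Lambda_i$ is cut out by $y_1=\cdots=y_n=0$, with complementary coordinates $u_1,\ldots,u_n$ along $\Lambda_i$. The ideal $I(W)\subset \bC[u,y,s]$ then contains $(y_1,\ldots,y_n)$, and its image $I_W$ in $\bC[u,s]$ defines $W$ as a subvariety of $\Lambda_i\times\Spec R$. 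For any lift $\tilde f\in\bC[u,s]$ of an element of $I_W$, the coisotropy of $W$ yields $\{y_k,\tilde f\}\in I(W)$; since $\Lambda_i$ is Lagrangian, the Hamiltonian vector fields $\{y_k,-\}$ recover, up to sign, each derivative $\partial_{u_j}\tilde f$. Hence $I_W$ is stable under all $\partial_{u_j}$.

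A Taylor expansion in the $u$-variables finishes the argument: writing $f=\sum_\beta u^\beta f_\beta(s)\in I_W$, iteratively applying $\partial_u^\beta$ deposits each coefficient $f_\beta(s)$ into $I_W$, so $I_W=J\cdot\bC[u,s]$ for some ideal $J\subset\bC[s]$. Locally this gives $W=\Lambda_i\times V(J)$, and irreducibility of $W$ forces $V(J)$ to be irreducible. Because $\Lambda_i$ is irreducible these local product decompositions glue to a global equality $W=\Lambda_i\times S_w$ with $S_w\subset\Spec R$ irreducible, which is precisely what the definition of relative holonomic requires.

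The main obstacle is verifying that Gabber's involutivity theorem extends to the relative setting. Since the only structural feature used is that $R$ sits in the Poisson center of $\gr\sA_R$, the classical proof passes to any smooth commutative base $R$ without change; after that, the remaining steps are a purely local symplectic-geometric calculation, identical to the one Maisonobe performs in the $R=\bC[\bs]$ case.
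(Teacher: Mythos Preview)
Your argument is correct and takes a genuinely different route from the paper's, though both rest on Gabber's involutivity theorem applied to $\sA_R$ (which, as you note, goes through verbatim because $R$ is Poisson-central). The paper exploits involutivity \emph{fiberwise}: for each closed point $q\in\Spec R$ it pushes the involutive ideal $J$ into $\gr\sD_X$, deduces that every nonempty fiber $\Chr(N)\cap p_2^{-1}(q)$ has dimension $\ge n$ (after a detour through the locus where the fibers are smooth and reduced, plus upper semicontinuity), and then uses $\dim\Lambda=n$ to conclude that each fiber is a union of the $\Lambda_w$; the sets $S_w$ are then cut out as $\Chr(N)\cap p_1^{-1}(\lambda_w)$ for general $\lambda_w\in\Lambda_w$. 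You instead work \emph{component by component}: fixing an irreducible component $W\subset\Lambda_i\times\Spec R$, you use coisotropy of $W$ and the Lagrangian property of $\Lambda_i$ to show that the ideal of $W$ inside $\Lambda_i\times\Spec R$ is stable under all derivations along $\Lambda_i$, hence is extended from $R$. Your approach is more direct and avoids the semicontinuity and smooth-fiber detours; what it buys the paper in exchange is an explicit closed description of $S_w$. Two points you pass over quickly deserve a sentence each in a final write-up: first, Gabber gives involutivity of $J=\sqrt{\Ann\gr N}$, and you need the standard consequence that each minimal prime of $J$ is itself involutive (multiply by an element of the other primary components and use the Leibniz rule); second, the Darboux-type coordinates $(u,y)$ adapted to $\Lambda_i$ are only available at a general point of $\Lambda_i$ (where the base of the conormal bundle is smooth), so your local product identity should be established on a dense open and then propagated to all of $W$ by irreducibility.
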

\begin{proof} The Poisson bracket on $\gr \sA_R$ is the $R$-linear extension of the Poisson bracket on $\gr\sD_X$. Let $J$ be the radical ideal of the annihilator in $\gr \sA_R$ of $\gr N$. By Gabber's Theorem \cite[A.III 3.25]{Bj}, $J$ is involutive with respect to the Poisson bracket on $\gr\sA_R$, that is, $\{J,J\}\subset J$. Let $\mathfrak{m}$ be a maximal ideal in $R$ corresponding to a point $q$ in the image of $\Chr(N)$ under the second projection $$p_2:T^*X\times\Spec R\ra\Spec R.$$ By $R$-linearity of the Poisson bracket, it follows that $J+\mathfrak{m}\cdot \sA_R$ is involutive. Therefore the image $\bar J$ of $J$ in the ring $\gr \sA_R\otimes_R R/\mathfrak{m}\simeq \gr\sD_X$ is involutive under the Poisson bracket on $\gr\sD_X$. If this ideal would be radical, we could conclude that all the irreducible components of the fiber $\Chr(N)\cap p_2^{-1}(q)$ have dimension at least $\dim X$. Note however that the same assertions on involutivity are true for the associated sheaves since the Poisson bracket on a $\bC$-algebra induces a canonical Poisson bracket on the localization of the algebra with respect to any multiplicatively closed subset, cf. \cite[Lemma 1.3]{Kal}. Thus, restricting to an open subset of $\Chr(N)$ where the second projection $p_2$ has smooth reduced fibers, and assuming $q=p_2(y)$ for a point $y$ in this open subset, the involutivity implies that   $\dim_y(\Chr(N)\cap p_2^{-1}(q))\ge \dim X$. By the upper-semicontinuity on $\Chr(N)$ of the function $y\mapsto \dim_y (\Chr(N)\cap p_2^{-1}(p_2(y)))$, every irreducible component of a non-empty fiber  $\Chr(N)\cap p_2^{-1}(q)$ has dimension $\ge \dim X$. (So far, this is an elaborate adaptation of proof of the algebraic case of \cite[Proposition 5]{M} to the case when $\bC[\bs]$ is replaced by $R$.)

Since $\Lambda$ is equidimensional with $\dim \Lambda=\dim X$, and $\Lambda$ contains every non-empty fiber $\Chr(N)\cap p_2^{-1}(q)$, it follows that  $\Chr(N)\cap p_2^{-1}(q)$ is a finite union of some of the irreducible conic Lagrangian subvarieties $\Lambda_w$ of $T^*X$ which are irreducible components of $\Lambda$. Define $S_w$ to be the subset of closed points $q$ in $\Spec R$ such that $\Lambda_w$ is an irreducible component of $\Chr(N)\cap p_2^{-1}(q)$. Then $\Chr(N)=\cup_w (\Lambda_w\times S_w)$. Moreover, setting $\lambda_w$ to be a general point of $\Lambda_w$, 
$$
\{\lambda_w\}\times S_w=\Chr(N)\cap p_1^{-1}(\lambda_w),
$$
where $p_1:T^*X\times\Spec R\ra T^*X$ is the first projection. Since the right-hand side is defined in $\Spec R$ by finitely many algebraic regular functions, $S_w$ is Zariski closed in $\Spec R$. It follows that $\Chr (N)$ is relative holonomic over $R$. \end{proof}

\subsection{} Recall from \ref{subGrade} the definition of pure modules over $\sA_R$. Examples of pure modules are given by the following.

\begin{defn}\label{defnImp}
We say that a nonzero finitely generated $\sA_R$-module $ N$ is {\it Cohen-Macaulay}, or more precisely {\it $j$-Cohen-Macaulay}, if for some $j\ge 0$
\[\Ext^k_{\sA_R}( N,\sA_R)=0 \textup{ if }k\not=j.\]

\end{defn}

\begin{rmk}\label{remdefnImp}
If $N$ is a Cohen-Macaulay $\sA_R$-module, then:
\begin{enumerate}
\item $N$ is $j$-pure (see Definition \ref{defjpure}), by Lemma \ref{lemEpure} (2);
\item $\Chr(N)$ is equidimensional of codimension $j$, by Propositions \ref{propPGF}, \ref{propJgr}, and \ref{propCo}.
\end{enumerate}
\end{rmk}

\begin{lemma}\label{cor:maxcm}
If $ N$ is relative holonomic over $R$ and $j( N)=n+\dim( R)$, then it is $(n+\dim( R))$-Cohen-Macaulay.
\end{lemma}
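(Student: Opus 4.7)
The plan is to show that $\Ext^k_{\sA_R}(N,\sA_R)=0$ for every $k\neq n+\dim(R)$. The vanishing for $k<n+\dim(R)=j(N)$ is immediate from the definition of the grade number recalled in \ref{subGrade}, and the nonvanishing at $k=n+\dim(R)$ is built into the same definition. The real content of the lemma is therefore the vanishing of $\Ext^k_{\sA_R}(N,\sA_R)$ for $k>n+\dim(R)$.

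For such $k$, I would argue by contradiction, assuming $\Ext^k_{\sA_R}(N,\sA_R)\neq 0$. The Auslander regularity of $\sA_R$ (Theorem \ref{thrmGAAR}) gives the inequality $j(\Ext^k_{\sA_R}(N,\sA_R))\geq k$, and Lemma \ref{thm:relCh}(1) then forces
$$
\dim \Chr(\Ext^k_{\sA_R}(N,\sA_R))
=2n+\dim(R)-j(\Ext^k_{\sA_R}(N,\sA_R))
\leq 2n+\dim(R)-k
<n.
$$
On the other hand, Lemma \ref{lm:relhol}(2) says that the nonzero module $\Ext^k_{\sA_R}(N,\sA_R)$ is itself relative holonomic over $R$, so its relative characteristic variety is a finite union $\bigcup_v \Lambda_v\times S_v$ in which each $\Lambda_v$ is an irreducible conic Lagrangian subvariety of $T^*X$ and therefore of dimension exactly $n$. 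Any nonempty such union has dimension at least $n$, contradicting the strict inequality above. This rules out nonvanishing and completes the argument.

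The proof is essentially bookkeeping once one has the correct point of view: the key observation is the tension between the two structural constraints on relative holonomic modules already available, namely the grade/dimension identity $j+\dim\Chr=2n+\dim(R)$ and the presence of a top-dimensional Lagrangian factor in every irreducible component of the relative characteristic variety. No substantive obstacle arises; the only care needed is to verify that the Ext module in question is itself relative holonomic, which is exactly the content of Lemma \ref{lm:relhol}(2).
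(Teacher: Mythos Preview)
Your proof is correct and follows essentially the same approach as the paper: both argue that a nonzero $\Ext^k_{\sA_R}(N,\sA_R)$ for $k>n+\dim(R)$ would be relative holonomic by Lemma \ref{lm:relhol}(2), hence have relative characteristic variety of dimension at least $n$, while Auslander regularity together with Lemma \ref{thm:relCh}(1) forces this dimension to be strictly less than $n$. The only cosmetic difference is the order in which the two bounds are derived.
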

\begin{proof} The condition on $j(N)$ implies that $N\neq 0$ by Lemma \ref{thm:relCh} (1). If $\Ext^k_{\sA_R}( N,\sA_R)\not=0$ for some $k>n+\dim(\Spec R)$, then by Lemma \ref{lm:relhol} (2), $\Ext^k_{\sA_R}( N,\sA_R)$ is relative holonomic. Hence 
$\dim (\Chr(\Ext^k_{\sA_R}( N,\sA_R)))\ge n.$
Since $\sA_R$ is an Auslander regular ring, $j(\Ext^k_{\sA_R}( N,\sA_R))\ge k$. This contradicts Lemma \ref{thm:relCh} (1). 
\end{proof}

\subsection{}
For a finitely generated $\sA_R$-module $ N$, since $ N$ is also an $R$-module, we write 
\[B_{ N}=\Ann_R( N)\]
and denote by $Z(B_N)$ the reduced subvariety in $\Spec R$ defined by the radical ideal of $B_{ N}$. Since in general $ N$ is not finitely generated over $R$, it is a priori not clear that $Z(B_{ N})$ is  the $R$-module support of $ N$, $\supp_R( N)$, consisting of closed points with maximal ideal $\mathfrak{m}\subset R$ such that the localization $N_\mathfrak{m}\ne 0$.

\begin{lemma}\label{lm:Rsupp}
If $ N$ is relative holonomic over $R$, then 
\[Z(B_{ N})=p_2(\Chr( N)),\]
where $p_2\colon T^*X\times \Spec R\to \Spec R$ the natural projection. In particular, 
\[Z(B_{ N})= \supp_{R}( N).\]
\end{lemma}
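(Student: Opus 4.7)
The plan is to establish the chain of inclusions
\[ p_2(\Chr(N)) \;\subseteq\; \supp_R(N) \;\subseteq\; Z(B_N) \;\subseteq\; p_2(\Chr(N)), \]
from which both equalities in the statement follow simultaneously.

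The outer two inclusions are formal and do not need relative holonomicity. For $\supp_R(N)\subseteq Z(B_N)$, if a closed point $q$ with maximal ideal $\mathfrak{m}$ is not in $Z(B_N)$, pick $r\in B_N\setminus\mathfrak{m}$; then $r$ is invertible in $R_\mathfrak{m}$ and kills $N$, so $N_\mathfrak{m}=0$. For $p_2(\Chr(N))\subseteq \supp_R(N)$, invoke Remark \ref{rmkF}: a good filtration on $N$ localizes to a good filtration on $N_\mathfrak{m}$ with $\gr(N_\mathfrak{m})=(\gr N)_\mathfrak{m}$, so $q\in p_2(\Chr(N))$ forces $(\gr N)_\mathfrak{m}\neq 0$, and hence $N_\mathfrak{m}\neq 0$.

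The remaining inclusion $Z(B_N)\subseteq p_2(\Chr(N))$ is where relative holonomicity is essential. By Definition \ref{defRHL} one has $\Chr(N)=\bigcup_w \Lambda_w\times S_w$, so $p_2(\Chr(N))=\bigcup_w S_w$ is \emph{Zariski closed} in $\Spec R$. Fix a closed point $q\notin p_2(\Chr(N))$; by closedness I can choose $r\in R\setminus\mathfrak{m}$ with $\Spec R_r\subseteq \Spec R\setminus p_2(\Chr(N))$. By Remark \ref{rmkF} the localized filtration on $N_r$ is good with $\gr(N_r)=(\gr N)_r$, whose support in $T^*X\times\Spec R_r$ is empty; hence $\gr(N_r)=0$. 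A good filtration is exhaustive and bounded below (any two good filtrations are equivalent, and the standard filtration $F_i N=F_i\sA_R\cdot\langle m_1,\dots,m_k\rangle$ attached to a finite generating set is bounded below by construction), so $\gr(N_r)=0$ forces $N_r=0$. Picking generators $m_1,\dots,m_k$ of $N$ over $\sA_R$, vanishing of $N_r$ yields integers $k_i$ with $r^{k_i}m_i=0$, and taking $K=\max_i k_i$ gives $r^K\in B_N\setminus\mathfrak{m}$, so $q\notin Z(B_N)$.

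The principal obstacle lies precisely in this last inclusion. Without relative holonomicity the image $p_2(\Chr(N))$ would only be constructible in $\Spec R$, and the basic-open-neighborhood step would break down. One also crucially uses finite generation of $N$ over $\sA_R$ (not as an $R$-module, where $N$ need not be finitely generated) to convert the pointwise torsion statement $N_r=0$ into the existence of a single uniform annihilator $r^K\in R$ of all of $N$.
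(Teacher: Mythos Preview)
Your proof is correct and follows essentially the same approach as the paper. The paper proves the ideal identity $\sqrt{\Ann_R(N)}=R\cap\sqrt{\Ann_{\gr\sA_R}(\gr N)}$ element-wise and then separately shows $\supp_R(\gr N)=\supp_R(N)$, whereas you package the same ingredients (closedness of $p_2(\Chr(N))$ from relative holonomicity, localization of good filtrations, boundedness below, and finite generation of $N$ over $\sA_R$) into a single chain of inclusions; the underlying mechanism is identical.
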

\begin{proof}
For $R=\bC[\bs]$ and in the analytic setting, this is \cite[Proposition 9]{M}, whose proof can be easily adapted to our case. Since $N$ is relative holonomic, $p_2(\Chr (N))$ is closed. Since the contraction of a radical ideal is a radical ideal, the ideal defining $p_2(\Chr (N))$ is $R\cap \sqrt{\Ann_{\gr \sA_R} (\gr N)}$. Hence the first assertion is equivalent to
$$
R\cap \sqrt{\Ann_{\gr \sA_R} (\gr N)} = \sqrt {\Ann_R(N)},
$$
where $R$ is viewed as a $\bC$-subalgebra of $\gr\sA_R=\gr\sD_X\otimes_\bC R$ via the map $a\mapsto 1\otimes a$ for $a$ in $R$. Let $b$ be in $R$. If $b^k N=0$ for some $k\ge 1$, then $b^k(\gr N)=0$ as well. Conversely, if $b^k(\gr N) =0$ for some $k\ge 1$, then $b^k(F_iN)\subset F_{i-1}N$ for all $i$. Since $\gr N$ is finitely generated over $\gr \sA_R$, the filtration $F$ on $N$ is bounded from below. Then by induction applied to the short exact sequence
$$
0\ra F_{i-1}N\ra F_iN\ra \grF_i N\ra 0,
$$
it follows that for each $i$ there exist a multiple $k_i$ of $k$ such that $b^{k_i}(F_iN)=0$, and $k_i$ form an increasing sequence. Fix a finite set of generators of $N$ over $\sA_R$. Since $F$ is exhaustive, there exists an index $j$ such that all the generators are contained in $F_{j}N$. Then $b^{k_j}N=0$. 

We proved thus the first claim, or equivalently, that $Z(B_N)=\supp_R(\gr N)$. Hence the second assertion follows from the equality $$\supp_R(\gr N)=\supp_R(N)$$ which is proved as follows. If $\mathfrak{m}$ is a maximal ideal in $R$ such that $(\grF_iN)_\mathfrak{m}\ne 0$ for some $i$, then $(F_iN)_\mathfrak{m}\ne 0$ since localization is an exact functor. Then, again by exactness, $N_\mathfrak{m}\ne 0$ since $F_iN$ injects into $N$. Thus $\supp_R(\gr N)$ is a subset of $\supp_R(N)$. Conversely, if $N_\mathfrak{m}\ne 0$, take $i$ to be the minimum integer with the property that $(F_iN)_\mathfrak{m}\ne 0$ but $(F_{i-1}N)_\mathfrak{m}=0$. Then $(\grF_i N)_\mathfrak{m}\ne 0$.
\end{proof}

\begin{lemma}\label{lm:minj}
Suppose that $ N$ is relative holonomic over $R$ and $(n+l)$-pure for some $0\le l\le \dim(R)$. If $b$ is an element of $R$ not contained in any minimal prime ideal containing $B_N$, then the morphisms given by multiplication by $b$ 
\[ N\xrightarrow{b} N\]
and 
\[\Ext^{n+l}_{\sA_R}( N,\sA_R)\xrightarrow{b}\Ext^{n+l}_{\sA_R}( N,\sA_R)\]
are injective.  Furthermore, there exists a good filtration of $ N$ over $R$ so that 
\[\gr  N\xrightarrow{b}\gr  N\]
is also injective.
\end{lemma}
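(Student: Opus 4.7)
We prove the three injectivity claims in sequence, exploiting the purity of $N$ and the structure of $\Chr(N) = \bigcup_w \Lambda_w \times S_w$. The key observation is that, by Lemma \ref{thm:relCh}(1), $\dim \Chr(N) = n + \dim R - l$, so the top-dimensional irreducible components of $\Chr(N)$ are those $\Lambda_w \times S_w$ with $\dim S_w = \dim R - l$, and the corresponding $S_w$ are top-dimensional irreducible components of $Z(B_N)$. The hypothesis on $b$ then translates to: $b$ does not vanish on any such $S_w$.

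\emph{Injectivity on $N$.} Let $K = \ker(b \colon N \to N)$. If $K \neq 0$, then $K$ is a nonzero submodule of $N$, hence relative holonomic by Lemma \ref{lm:relhol}(1) and $(n+l)$-pure by purity of $N$. Therefore $\dim \Chr(K) = n + \dim R - l$, and $\Chr(K)$ has an irreducible component $C$ of this dimension. Being an irreducible top-dimensional closed subset of $\Chr(N)$, $C$ must equal some $\Lambda_w \times S_w$ with $\dim S_w = \dim R - l$. By Lemma \ref{lm:Rsupp}, $Z(B_K) = p_2(\Chr(K)) \supset S_w$; but $b \in \Ann_R(K) = B_K$ forces $S_w \subset V(b)$, contradicting the hypothesis on $b$.

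\emph{Injectivity on $E := \Ext^{n+l}_{\sA_R}(N, \sA_R)$.} By Lemma \ref{lm:relhol}(2), $E$ is relative holonomic with $\Chr(E) \subset \Chr(N)$; the standard Auslander regular ring duality (cf.~Proposition \ref{lemEpure}) transfers purity from $N$ to $E$, so $E$ is also $(n+l)$-pure. The argument of the previous paragraph, applied to $E$, shows that the top-dimensional irreducible components of $\Chr(E)$ coincide with top-dimensional components $\Lambda_w \times S_w$ of $\Chr(N)$, so every irreducible component of $Z(B_E)$ is an irreducible component of $Z(B_N)$. Hence $b$ avoids every minimal prime of $B_E$, and the previous argument, applied to $E$, yields injectivity.

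\emph{Good filtration with $b$ injective on the associated graded.} Start from any good filtration $F$ on $N$. The $b$-torsion submodule $T$ of $\textup{gr}^F N$ is a finitely generated graded $\gr \sA_R$-submodule supported on the proper closed subset $V(b) \cap \Chr(N)$ of $\Chr(N)$. Pass to the $b$-saturated filtration $F'_i N = \{z \in N : b^k z \in F_{i+k} N \text{ for some } k \geq 0\}$, equivalently the saturation of the Rees module $R_F N \subset N[t]$ with respect to $b$. That $F'$ is still a good filtration follows from the finite generation of $T$ together with the $b$-torsion-freeness of $N$ established in the first step. Once $F'$ is good and $b$-saturated, $(b,t)$ forms a regular sequence on $R_{F'} N$ and consequently $b$ acts injectively on $\textup{gr}^{F'} N = R_{F'} N / t R_{F'} N$. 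The main obstacle is the verification that $F'$ is good, which rests on the Noetherianness of the Rees module together with the strictly smaller-dimensional support of the $b$-torsion inside $\Chr(N)$.
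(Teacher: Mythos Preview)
Your arguments for the injectivity of $b$ on $N$ and on $\Ext^{n+l}_{\sA_R}(N,\sA_R)$ are correct and essentially match the paper's proof. Two small remarks: the purity of $E=\Ext^{n+l}_{\sA_R}(N,\sA_R)$ is automatic from Lemma~\ref{lemEpure}(1) (it does not need to be ``transferred'' from $N$), and your intermediate claim that every irreducible component of $Z(B_E)$ is a component of $Z(B_N)$ is neither justified nor needed---what you actually use, and what suffices, is $\Chr(E)\subset\Chr(N)$, so that any top-dimensional component of $\Chr(\ker(b|_E))$ is a top-dimensional component of $\Chr(N)$.

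The third part, however, has a genuine gap. First, your written formula $F'_iN=\{z:b^kz\in F_{i+k}N\}$ cannot be right: with that definition $F'$ is not separated (any nonzero $z$ lies in $F'_iN$ for all $i\ll 0$). Presumably you intend the standard $b$-saturation $F'_iN=\{z:b^kz\in F_iN\}$, which does make $b$ injective on $\gr^{F'}N$ by a direct check. The real problem is that goodness of $F'$ is not established. Your justification (``Noetherianness of the Rees module together with the strictly smaller-dimensional support of the $b$-torsion'') is not a proof: the $b$-saturation $R_{F'}N$ sits inside $N[t]$, which is not a finitely generated module over the Rees ring, so Noetherianness of $R_FN$ alone does not bound $R_{F'}N$. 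Knowing that $b^{k_0}$ kills the $b$-torsion $T\subset\gr^FN$ only gives $b^{(j-i)k_0}z\in F_iN$ for $z\in F'_iN$ of $F$-order $j$, with no uniform bound as $j$ varies.

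The paper avoids this entirely: it invokes Proposition~\ref{propPGF} to choose from the start a good filtration for which $\gr N$ is $(n+l)$-pure over $\gr\sA_R$, and then the same purity-and-support argument you used for $N$ applies verbatim to $\gr N$. In fact, any route to goodness of your $F'$ that I can see ultimately passes through the existence of such a filtration (one shows $F'$ is comparable to it), so the saturation detour does not buy anything over the paper's direct approach.
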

\begin{proof}

We first prove that $ N\xrightarrow{b} N$ is injective.
If on the contrary its kernel $K\not=0$, then by Lemma \ref{thm:relCh} (2)
\[\Chr(K)\subset \Chr( N).\]
By purity, we know that $j(K)=j( N)=n+l$. Thanks to Lemma \ref{thm:relCh} (1), 
\[\dim(\Chr(K))=\dim(\Chr( N)).\]
By Proposition \ref{propPGF}, we can choose good filtrations on $K$ and $ N$ so that both $\gr K$ and $\gr  N$ are $(n+l)$-pure over $\gr\sA_R$. Hence $\Chr(K)$ and $\Chr( N)$ are equidimensional of dimension $n+\dim(R)-l$, by Propositions \ref{propJgr} and \ref{propCo}. In particular, $\Chr(K)$ is a union of some irreducible components of $\Chr(N)$.

By the relative holonomicity of $N$, the irreducible components of $\Chr(N)$ are $\Lambda_i\times Z_i$ with $i$ in some finite index set $I$, for some conic irreducible Lagrangian subvarieties $\Lambda_i\subset T^*X$ and some irreducible closed subsets $Z_i\subset\Spec R$. The equidimensionality of $\Chr(N)$ implies that $\dim Z_i=\dim(R)-l$.

 By Lemma \ref{lm:Rsupp}, $Z(B_N)=\cup_{i\in I}Z_i$, and the assumption on $b$ is that $(b=0)$ does not contain any irreducible component of $Z(B_N)$, where by $(b=0)$ we mean the reduced closed subset of $\Spec R$ defined by the radical ideal of $b$. We hence have 
\[\Chr(K)\not\subset T^*X\times(b=0),\]
 However, since $b$ annihilates $K$, $\Chr(K)\subset T^*X\times(b=0)$, which is a contradiction. 

Similarly, since $\gr  N$ is $(n+l)$-pure over $\gr\sA_R$, we can run the above argument by replacing $\Chr(K)$ with the support of the kernel of the map
\[\gr  N\xrightarrow{b} \gr  N\]
to obtain the injectivity of the latter. 

By Lemma \ref{lm:relhol} (2), $\Ext^{n+l}_{\sA_R}( N,\sA_R)$ is relative holonomic and 
\[\Chr(\Ext^{n+l}_{\sA_R}( N,\sA_R))\subset \Chr( N).\]
Since $\Ext^{n+l}_{\sA_R}( N,\sA_R)$ is always $(n+l)$-pure, cf. Lemma \ref{lemEpure} (1), by a similar argument we conclude that 
\[\Ext^{n+l}_{\sA_R}( N,\sA_R)\xrightarrow{b}\Ext^{n+l}_{\sA_R}( N,\sA_R)\]
is also injective.
\end{proof}

The following is the key technical result of the article. For simplicity, we take $\Spec R$ to be an open set of $\bC^r$, the only case we need for the proof of the main result.

\begin{prop}\label{prop:maincm}
Let $\Spec R$ be a nonempty open subset of $\bC^r$. Let $ N$ be an $\sA_R$-module that is relative holonomic over $R$ and $(n+l)$-Cohen-Macaulay over $\sA_R$ for some $0\le l\le r$. Then 
\[\bal\in Z(B_{ N})\; \text{\it  if and only if }  N\otimes_{R}\bC_\bal\not=0,\]
where $\bC_\bal$ is the residue field of the closed point $\bal\in \Spec R$.
\end{prop}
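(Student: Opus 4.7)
The plan splits into the trivial implication and a descending induction on $l$ for the nontrivial one. For the easy direction: if $N\otimes_R\bC_\bal\ne 0$ then $N_{\mathfrak{m}_\bal}\ne 0$, so $\bal\in\supp_R N=Z(B_N)$ by Lemma \ref{lm:Rsupp}. The converse I would prove by reverse induction on $l$, with base case $l=r$.

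In the base case, Remark \ref{remdefnImp} forces $\Chr(N)$ to be equidimensional of dimension $n$, so in the decomposition $\Chr(N)=\bigcup_w\Lambda_w\times S_w$ with $\Lambda_w$ Lagrangian of dimension $n$, each $S_w$ must be a point and $Z(B_N)$ is finite. After shrinking $\Spec R$ to a Zariski open $\Spec R'$ containing $\bal$ but no other point of $Z(B_N)$, one has $\sqrt{B_{N'}}=\mathfrak{m}_\bal$ (by Lemma \ref{lm:Rsupp}) and hence $\mathfrak{m}_\bal^k N'=0$ for some $k$. An assumption $N\otimes_R\bC_\bal=0$ would mean $\mathfrak{m}_\bal N'=N'$; iterating yields $N'=\mathfrak{m}_\bal^k N'=0$, contradicting $\bal\in\supp_{R'}N'$.

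For the inductive step $l<r$: the equidimensionality of $\Chr(N)$ makes every minimal prime $\mathfrak{p}_w$ of $B_N$ have height $l<r=\textup{ht}(\mathfrak{m}_\bal)$, so by prime avoidance there exists $b\in\mathfrak{m}_\bal$ lying outside every $\mathfrak{p}_w$. Lemma \ref{lm:minj} then supplies simultaneous injectivity of multiplication by $b$ on $N$, on $\Ext^{n+l}_{\sA_R}(N,\sA_R)$, and on $\gr N$ for a suitable good filtration. Applying $\Ext^*_{\sA_R}(-,\sA_R)$ to $0\to N\xrightarrow{b}N\to N/bN\to 0$, the $(n+l)$-Cohen-Macaulay vanishing for $N$ and the injectivity on $\Ext^{n+l}$ force $\Ext^j_{\sA_R}(N/bN,\sA_R)=0$ for $j\ne n+l+1$. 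Nonvanishing of $N/bN$ holds because $bN=N$ would make $N$ an $R[1/b]$-module, contradicting $\bal\in V(b)\cap\supp_R N$; combined with Auslander regularity this shows $N/bN$ is genuinely $(n+l+1)$-Cohen-Macaulay. Using the good filtration to identify $\gr(N/bN)=\gr N/b\gr N$ gives $\Chr(N/bN)=\Chr(N)\cap p_2^{-1}V(b)$, so $\bal\in p_2(\Chr(N/bN))=Z(B_{N/bN})$ by Lemma \ref{lm:Rsupp}. The induction hypothesis then supplies $(N/bN)\otimes_R\bC_\bal\ne 0$, which coincides with $N\otimes_R\bC_\bal$ because $b\in\mathfrak{m}_\bal$ acts as zero on the residue field.

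The main obstacle is engineering the cut-down $N\mapsto N/bN$ so that both the Cohen-Macaulay jump (from codegree $n+l$ to $n+l+1$) and the persistence of $\bal$ in the $R$-support are secured simultaneously. This is precisely where Lemma \ref{lm:minj} is essential: its combined injectivity of $b$ on $N$, on the unique nonzero $\Ext$ of $N$, and on the associated graded module for one and the same good filtration is what at once (i) keeps the short exact sequence exact, (ii) eliminates the penultimate $\Ext$ of $N/bN$ so that the CM codegree jumps cleanly, and (iii) identifies $\Chr(N/bN)$ with $\Chr(N)\cap p_2^{-1}V(b)$. Without the coordinated control on all three the induction would stall; note also that this mechanism fails exactly at $l=r$, which is why the base case demands a different, annihilator-based argument.
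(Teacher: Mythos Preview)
Your argument is essentially correct and takes a genuinely different route from the paper's.  The paper passes from $\sA_R$ to $\sA_{R/(b)}$ with $b$ \emph{linear}, so that $\Spec R/(b)$ is again an open set in an affine space of one lower dimension; the Cohen--Macaulay index stays at $n+l$ over the smaller ring, and verifying this requires a Grothendieck spectral sequence.  You instead keep $R$ fixed and show $N/bN$ is $(n+l+1)$-Cohen--Macaulay over the \emph{same} $\sA_R$, using only the long exact $\Ext$ sequence; this is more elementary and does not require $b$ to be linear.  Your treatment of the base case $l=r$ is also cleaner: after localizing so that $Z(B_{N'})=\{\bal\}$, the nilpotence $\mathfrak m_\bal^k N'=0$ together with $\mathfrak m_\bal N'=N'$ forces $N'=0$ directly, whereas the paper continues to cut by linear forms in this range and invokes a finite-length argument and Lemma~\ref{cor:maxcm}.

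There is one genuine gap in your inductive step.  The sentence ``$bN=N$ would make $N$ an $R[1/b]$-module, contradicting $\bal\in V(b)\cap\supp_R N$'' does not stand on its own: for an $R$-module that is not finitely generated over $R$, being an $R[1/b]$-module does \emph{not} force the $R$-support into $D(b)$ (e.g.\ $\bC[s,s^{-1}]$ over $\bC[s]$ has $(s)$ in its support).  The fix is already implicit in what you write afterwards: from injectivity of $b$ on both $N$ and $\gr N$ one gets $\gr(N/bN)\cong\gr N/b\,\gr N$ regardless of whether $N/bN$ vanishes; since $\gr N$ \emph{is} finitely generated over $\gr\sA_R$ and $\bal\in p_2(\Chr N)$, Nakayama at any point of $\Chr(N)\cap p_2^{-1}(\bal)$ gives $\gr N/b\,\gr N\neq 0$, hence $N/bN\neq 0$.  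Reorder your argument so that the $\gr$ identification and the $\Chr$ computation come first, and the nonvanishing of $N/bN$ then follows immediately.
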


\begin{proof}
We first assume $ N\otimes_R \bC_\bal\not=0$. Then $ N\otimes_R R_\mathfrak{m}\not=0$, where $\mathfrak{m}\subset R$ is the maximal ideal of $\bal$ and $R_\mathfrak{m}$ is the localization of $R$ at $\mathfrak{m}$.  Then $\bal$ belongs to $\supp_R( N)=Z(B_N)$, by Lemma \ref{lm:Rsupp}.

Conversely, we fix a point $\bal$ in $Z(B_N)$. Since $ N$ is $(n+l)$-Cohen-Macaulay, it is in particular $(n+l)$-pure as a module over $\sA_R$. By Proposition \ref{propPGF}, we then can choose a good filtration $F$ on $ N$ so that $\gr  N$ is also pure over $\gr\sA_R$. Hence $\Chr( N)$ is purely of dimension $n+r-l$. By relative holonomicity and Lemma \ref{lm:Rsupp}, $Z(B_N)$ is also purely of dimension $r-l$. 

Let us consider the case when $l<r$. We then can choose a linear polynomial $b\in \bC[\bs]$ so that $(b=0)$ contains $\bal$, but does not contain any of the irreducible components of $Z(B_N)$. By Lemma \ref{lm:minj}, the morphisms given by multiplication by $b$
\[ N\xrightarrow{b} N
\textup{ and }
\gr  N\xrightarrow{b} \gr  N\]
are both injective, the good filtration from Lemma \ref{lm:minj} being constructed in the same way. Thus for every $i$  the vertical maps are injective in the diagram
$$
\xymatrix{
0 \ar[r] & F_{i-1}N \ar[r] \ar[d]^b& F_iN\ar[r] \ar[d]^b & F_i N/F_{i-1}N \ar[r] \ar[d]^b& 0\\
0 \ar[r] & F_{i-1}N \ar[r] & F_iN \ar[r]& F_i N/F_{i-1}N \ar[r]& 0
}
$$
and hence by the snake lemma we get an exact sequence
\be\label{eqSE1}
0\ra F_{i-1}N\otimes_R R/(b) \ra F_iN\otimes_R R/(b) \ra \grF_i N\otimes_R R/(b)\ra 0.
\ee

Note that $b$ is also injective on $N/F_iN$. Indeed, if not, then there exists some $\nu\in F_jN$ with $j>i$, $\nu\not\in F_{j-1}N$, and $b\nu\in F_iN$.  But then $b$ must annihilate the class of $\nu$ in $\grF_j N$, which contradicts the injectivity of $b$ on $\gr N$. Running a similar snake lemma as above after applying the multiplication by $b$ on the short exact sequence 
$$
0\ra F_iN\ra N\ra N/F_iN\ra 0,
$$
we obtain a short exact sequence
\be\label{eqSE2}
0\ra F_iN\otimes_R R/(b)\ra N\otimes_R R/(b)\ra (N/F_iN)\otimes_R R/(b)\ra 0
\ee
The injectivity from (\ref{eqSE1}) and (\ref{eqSE2}) implies that the induced filtration on $N\otimes_R R/(b)$,
$$
F_i(N\otimes_R R/(b)) =\text{im} (F_iN\ra N\otimes_R R/(b)) \simeq F_iN/(F_iN\cap bN),
$$
is the filtration by $$F_iN\otimes_R R/(b)\simeq F_iN/bF_iN,$$ and the surjectivity from (\ref{eqSE1}) then implies
\be\label{eqNRT}\gr ( N\otimes_R R/(b))\simeq \gr  N\otimes_R R/(b).\ee


By Lemma \ref{lm:Rsupp}, $p_2^{-1}(\bal)$ intersects non-trivially the support of $\gr  N$, hence the same is true for $p_2^{-1}(b=0)$. By Nakayama's Lemma for the finitely generated module $\gr N$ over $\gr \sA_R$, we hence have 
$$
0\neq \frac{\gr N}{b \cdot \gr N} \simeq\gr  N\otimes_R R/(b).
$$
Together with the isomorphism (\ref{eqNRT}), this implies that $ N\otimes_R R/(b)\not=0$. Since $ N\otimes_R R/(b)$ is also a finitely generated $\sA_{R/(b)}$-module and $\gr ( N\otimes_R R/(b))$ is a finitely generated $\gr \sA_{R/(b)}$-module, we further conclude from (\ref{eqNRT}) that the relative characteristic variety over $R/(b)$
\begin{equation}\label{eq:pbd}
\Chr( N\otimes_R R/(b))=(\text{id}_{T^*X}\times\Delta)^{-1}(\Chr( N)),
\end{equation}
where $\Delta\colon \Spec R/(b)\hookrightarrow \Spec R$ is the closed embedding. Hence $ N\otimes_R R/(b)$ is relative holonomic over $R/(b)$. By Lemma \ref{lm:Rsupp}, we further have 
\[Z(B_{ N\otimes R/(b)})=\Delta^{-1}(Z(B_N)).\]
 In particular,
\[\Delta^{-1}(\bal)\in Z(B_{ N\otimes R/(b)}).\]
Since 
\[ N\otimes_{R}\bC_\bal\simeq  N\otimes_R R/(b)\otimes_{R/(b)}\bC_{\Delta^{-1}(\bal)},\]
where $\bC_{\Delta^{-1}(\bal)}$ is the residue field of $\Delta^{-1}(\bal)\in \Spec R/(b)$,
our strategy will be to prove
\[ N\otimes_R \bC_{\bal}\not=0\]
by repeatedly replacing $ N$ by $ N\otimes_R R/(b)$ and $R$ by $R/(b)$.

To make this work, we need first to prove that $ N\otimes_R R/(b)$ remains Cohen-Macaulay over $\sA_{R/(b)}$. By taking a free resolution of $ N$, one can see that 
\be\label{eqRHQI}
\Rhom_{\sA_R}( N, \sA_R)\otimes_{\sA_R}^L \sA_{R/(b)}\simeq  \Rhom_{\sA_{R/(b)}}( N\otimes^L_R R/(b), \sA_{R/(b)})
\ee
in the derived category of right $\sA_{R/(b)}$-modules. Since the multiplication by $b$ is injective on $ N$, we further have 
\be\label{eqQIRH}
\Rhom_{\sA_{R/(b)}}( N\otimes^L_R R/(b), \sA_{R/(b)}) \simeq \Rhom_{\sA_{R/(b)}}( N\otimes_R R/(b), \sA_{R/(b)}).
\ee
We will use the Grothendieck spectral sequence associated with the left-hand side of (\ref{eqRHQI}) to compute the $\Ext$ modules from the right-hand side of (\ref{eqQIRH}). Let us assume without harm that $N$ is a left $\sA_R$-module. Then  viewing $\text{Hom}_{\sA_R}(\_\,, \sA_R)$ as a covariant right-exact functor on the opposite category of the category of left $\sA_R$-modules, the composition of the two derived functors $\Rhom_{\sA_R}(\_\,, \sA_R)$ and $(\_\,) \otimes_{\sA_R}^L \sA_{R/(b)}$ gives us a convergent first quadrant homology spectral sequence
\[E^2_{p,q}=\textup{Tor}_{p}^{\sA_R}(\Ext^q_{\sA_R}( N,\sA_R), \sA_{R/(b)})\Rightarrow \Ext^{-p+q}_{\sA_{R/(b)}}( N\otimes_R R/(b),\sA_{R/(b)}),\]
by \cite[Corollary 5.8.4]{HAWei}. Note that the conditions from {\it loc. cit.} are  satisfied in our case, since a projective object in the opposite category of the category of left $\sA_R$-modules is an injective left $\sA_R$-module $I$, and thus $\text{Hom}_{\sA_R}(I, \sA_R)$ is a projective right $\sA_R$-module, and so acyclic for the left exact functor  $(\_\,) \otimes_{\sA_R} \sA_{R/(b)}$.

Since $ N$ is $(n+l)$-Cohen-Macaulay over $\sA_R$, 
\[\Ext^q_{\sA_R}( N,\sA_R)=0 \textup{ for } q\not=n+l.\]
Then 
\[\textup{Tor}_{p}^{\sA_R}(\Ext^{n+l}_{\sA_R}( N,\sA_R), \sA_{R/(b)})=0 \textup{ for } p\not= 0\]
thanks to Lemma \ref{lm:minj},
since the complex
$
\sA_R\stackrel{b}{\ra} \sA_R
$
is a resolution of $\sA_{R/b}$. Therefore the above spectral sequence degenerates at $E^2$,
\[\Ext^{q}_{\sA_{R/(b)}}( N\otimes_R R/(b),\sA_{R/(b)})=0 \textup{ for } q\not= n+l,\]
and
\[\Ext^{n+l}_{\sA_{R/(b)}}( N\otimes_R R/(b),\sA_{R/(b)})\simeq \Ext^{n+l}_{\sA_R}( N,\sA_R)\otimes_{\sA_R} \sA_{R/(b)} \simeq \Ext^{n+l}_{\sA_R}( N,\sA_R)\otimes_R R/(b).\]
As a consequence, $ N\otimes_R R/(b)$ is $(n+l)$-Cohen-Macaulay over $\sA_{R/(b)}$. 

Since $b$ is linear, $\bC^{r-1}\simeq \Spec \bC[\bs]/(b)$, and the latter contains $\Spec R/(b)$ an open subset. We then repeatedly replace $R$ by $R/(b)$, $ N$ by $ N\otimes_R R/(b)$, and $\bal$ by $\Delta^{-1}(\bal)$. Each time $r$ drops by 1, $l$ stays unchanged, and $ N$ remains nonzero, relative holonomic, and $(n+l)$-Cohen-Macaulay. This reduces us  to the case $l=r$. 

If $0=l=r$, the claim is trivially true.

We now assume $0<l=r$. Since $ N$ is now relative holonomic and $(n+r)$-Cohen-Macaulay, hence $(n+r)$-pure, we have
\[\Chr( N)=\sum_w \Lambda_w\times \{p_w\}\]
where $p_w$ are  points in $\bC^r$. Hence $Z(B_{ N})$ is a finite union of points in $\Spec R$. Counting multiplicities, by Lemma \ref{thm:relCh} (2) we see that $ N$ is of finite length. 

 We now fix a linear polynomial $b\in \bC[\bs]$ with $b(\bal)=0$ but not vanishing at the other points of $Z(B_N)$. We then have an exact sequence 
\[0\to K\ra  N \xrightarrow{b}  N \ra  N\otimes_R R/(b)\to 0\]
where $K$ is the kernel. We claim that $K\neq 0$. To see this, chose a polynomial $c\in\bC[\bs]$ not vanishing at $\bal$ but vanishing at all other points of $Z(B_N)$. Then by Nullstellensatz, there is $m>0$ the smallest power such that $(bc)^m$ is in $B_N$. On the other hand, $c^m$ is not in $B_N$. Taking $p\ge 1$ to be the smallest with  $b^pc^m\in B_N$,  we see that there exists $\nu$ in $N$, such that $b^{p-1}c^m\nu$ is a nonzero element of $K$.

Since $K\not=0$  and since endomorphisms of modules of finite length are isomorphisms if and only if they are surjective, we have $N\otimes_R R/(b)\not=0$.  By  Lemma \ref{lm:relhol} (1), $N\otimes_R R/(b)$ is relative holonomic over $R$, and by Lemma \ref{thm:relCh} (2), every irreducible component of its relative characteristic variety over $R$ is one of the components $\Lambda_w\times\{p_w\}$ of $\Chr(N)$. Since $b$ annihilates $N\otimes_R R/(b)$, only the components with $b(p_w)=0$, and hence with $p_w=\bal$, appear. We conclude that  $N\otimes_R R/(b)$ is also relative holonomic over $R/(b)$. By Lemma \ref{thm:relCh} (1), we have $j_{\sA_{R/(b)}}( N\otimes_R R/(b))=n+r-1$. Then by Lemma \ref{cor:maxcm}, $ N\otimes_R R/(b)$ is $(n+r-1)$-Cohen-Macaulay over $\sA_{R/(b)}$.

We therefore can replace $ N$ by $ N\otimes_R R/(b)$, $R$ by $R/(b)$, and assume that $\Chr(N)=\cup_w \Lambda_w\times \{\bal\}$ for some irreducible conic Lagrangian subvarieties $\Lambda_w$ of $T^*X$. Repeating this process, each time $r$ drops by 1, $ N$ remains nonzero, relative holonomic, and $(n+r)$-Cohen-Macaulay. The process finishes at the case $r=0$, in which case there is nothing to prove anymore.
\end{proof}

\begin{rmk}
A result similar to Proposition \ref{prop:maincm} is proved by a different method in \cite[Appendix B]{Bath2} for $\sD_X[\bs]\bf^\bs/\sD_X[\bs]\bf^{\bs+\mathbf 1}$ when $\bf$ is a reduced free hyperplane arrangement.
\end{rmk}

\subsection{} We consider now  the left $\sA$-module
$$M=\sD_X[\bs]\bf^\bs/\sD_X[\bs]\bf^{\bs+\mathbf 1}.$$
In this case, the annihilator $B_M$ is the Bernstein-Sato ideal $B_F$, since $M$ is a cyclic $\sA$-module generated by the class of $\bf^\bs$ in $M$. 

It is well-known that the zero locus $Z(B_F)$ in $\bC^r$ has dimension $r-1$. Indeed, since $B_F$ is the intersection of the local Bernstein-Sato ideals, by restricting attention to the neighborhood of a smooth point of the zero locus of $\prod_{i=1}^rf_i$, one reduces the assertion to the case when $f_i=x_1^{a_i}$ for some $a_i\in\bN$ for all $i=1,\ldots r$ with $\ba=(a_1,\ldots, a_r)\neq (0,\ldots ,0)$. In this case, the Bernstein-Sato ideal is principal, generated by $\prod_{j=1}^{|\ba|}(\ba\cdot\bs + j)$ with $|\ba|=a_1+\ldots+a_r$. 

In addition, it is known that every top-dimensional irreducible component of $Z(B_F)$ is a hyperplane in $\bC^r$ defined over $\bQ$ by \cite{Sab}.

We will use the following result of Maisonobe, which also holds in the local analytic case, cf. \ref{subAC}:

\begin{theorem}\label{thmMais} (Maisonobe) The $\sA$-module $M$ is relative holonomic over $\bC[\bs]$, has grade number $j(M)=n+1$ over $\sA$, and $\dim \Chr(M)=n+r-1$. Every irreducible component of $Z(B_F)$ of codimension $>1$ can be translated by an element of $\bZ^r$ into a component of codimension one.
\end{theorem}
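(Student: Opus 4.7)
The plan is to treat the three claims in turn, exploiting the cyclic presentation of $M$ and the tools developed in Section \ref{secFP}.

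First, for relative holonomicity over $\bC[\bs]$, I invoke Proposition \ref{propMaj}: it suffices to produce a conic Lagrangian $\Lambda \subset T^*X$ with $\Chr(M) \subset \Lambda \times \bC^r$. Fix a Whitney stratification of $X$ adapted to $D = (f = 0)$ and let $\Lambda$ be the union of the closures of the conormal bundles to its strata, a conic Lagrangian. Filter $\sD_X[\bs]\bf^\bs$ by the order of differential operators; this yields a good relative filtration whose associated graded is supported in $\Lambda \times \bC^r$, because on the open stratum $U$ the module is $\sO_U[\bs]$-locally free of rank one (generated by $\bf^\bs$) and singularities propagate only along conormals of strata in $D$. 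The same bound holds for $\sD_X[\bs]\bf^{\bs+\mathbf 1}$, and Lemma \ref{thm:relCh} (2) transfers the bound to the quotient $M$; then Proposition \ref{propMaj} concludes.

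Second, for $j(M) = n+1$ and $\dim \Chr(M) = n+r-1$, Lemma \ref{thm:relCh} (1) reduces both to the dimension computation. From relative holonomicity, $\Chr(M) = \bigcup_w \Lambda_w \times S_w$ with each $\dim \Lambda_w = n$, so $\dim \Chr(M) = n + \dim Z(B_F)$ by Lemma \ref{lm:Rsupp}. The excerpt already notes that $\dim Z(B_F) = r-1$: the Sabbah--Gyoja result furnishes a codimension-one hyperplane component, while the nontriviality $B_F \neq 0$ (Sabbah) forces $\dim Z(B_F) \leq r-1$.

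Third, the translation statement is the main obstacle. The grade computation and Lemma \ref{cor:maxcm} show $M$ is $(n+1)$-Cohen-Macaulay over $\sA$, so by Lemma \ref{lm:relhol} (2) the dual $\Ext^{n+1}_\sA(M,\sA)$ is again relative holonomic with characteristic variety inside $\Chr(M)$. My approach would be to exploit the $\bZ^r$-equivariance of (\ref{eqBF}): right-multiplication by $f_i$ realizes the shift $s_i \mapsto s_i - 1$, relating $M$ to the translated cyclic modules $\sD_X[\bs]\bf^{\bs+\bk}/\sD_X[\bs]\bf^{\bs+\bk+\mathbf 1}$ for $\bk \in \bZ^r_{\ge 0}$, whose zero loci are $\bZ^r$-translates of $Z(B_F)$. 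Any component $W \subset Z(B_F)$ of codimension $>1$ must appear as an associated prime of the Cohen-Macaulay module $M$; an induction on codimension, tracking these associated primes across integer shifts and using Cohen-Macaulayness to rule out embedded components, should force $W + \bk$ into the union of codimension-one components for some $\bk \in \bZ^r$. The delicate point is controlling the Grothendieck spectral sequence for $\Rhom_\sA(M,\sA)$ under the shift action, so that the codimension filtration is respected and the induction terminates. This is precisely where Maisonobe's argument in \cite[R\'esultat 3]{M} is intricate, and I expect it to be the main technical hurdle.
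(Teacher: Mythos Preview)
Your second claim is handled exactly as in the paper. Your first claim is in the right spirit, though the paper simply cites \cite[R\'esultat 3]{M} for the Lagrangian bound; the assertion that ``singularities propagate only along conormals of strata in $D$'' is precisely the nontrivial content of Maisonobe's argument and is not justified by your sketch.

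The third claim contains a genuine error. You write that ``the grade computation and Lemma~\ref{cor:maxcm} show $M$ is $(n+1)$-Cohen-Macaulay over $\sA$.'' But Lemma~\ref{cor:maxcm} requires $j(N)=n+\dim(R)$, and here $R=\bC[\bs]$ has $\dim(R)=r$, while you have just computed $j(M)=n+1$. These agree only when $r=1$. For $r>1$ the lemma does not apply, and in fact $M$ is \emph{not} Cohen-Macaulay in general: this is exactly why the paper needs Lemma~\ref{lemGENE} to pass to a generic open subset of $\bC^r$ where Cohen-Macaulayness does hold. Worse, your strategy is self-defeating: if $M$ really were $(n+1)$-Cohen-Macaulay, then by Remark~\ref{remdefnImp} the variety $\Chr(M)$ would be equidimensional of dimension $n+r-1$, forcing $Z(B_F)$ to be purely of codimension one, and the translation statement would be vacuous. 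But components of codimension $>1$ do occur (see the discussion in the introduction and \cite{BO}), so no argument that begins by establishing global Cohen-Macaulayness of $M$ can succeed.

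The paper itself does not reprove the translation statement; it simply records that it is the content of \cite[R\'esultat 3]{M}. Maisonobe's actual argument uses a careful analysis of the slopes of $B_F$ and the shift structure, not a Cohen-Macaulay/spectral-sequence reduction of the kind you outline.
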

\begin{proof} In \cite[R\'esultat 3]{M} it is shown that $\Chr(M)=\cup_{i\in I}\Lambda_i\times Z_i$ for some finite set $I$ with $\Lambda_i\subset T^*X$ conic Lagrangian, $Z_i\subset \bC^r$ algebraic closed subset of dimension $\le r-1$. Thus $M$ is relative holonomic over $\bC[\bs]$. Lemma \ref{lm:Rsupp} shows that $Z(B_F)=\cup_{i\in I}Z_i$, cf. also the remark after \cite[R\'esultat 2]{M}. Since $\dim Z(B_F)=r-1$, it follows that $\dim \Chr(M)=n+r-1$, and hence $j(M)=n+1$ by Lemma \ref{thm:relCh} (1). The last claim is contained in the statement of \cite[R\'esultat 3]{M}.
\end{proof}

We next observe that over an open subset of $\bC^r$,  $M$ behaves particularly nice:

\begin{lemma}\label{lemGENE}
There exists an open affine subset $V=\Spec R\subset\bC^r$ such that the intersection of   $V$ with each irreducible component of codimension one of $Z(B_F)$ is not empty, and
the  module $M\otimes_{\bC[\bs]}R$ is relative holonomic over $R$ and $(n+1)$-Cohen-Macaulay over $\sA_R$.
\end{lemma}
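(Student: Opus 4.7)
The plan is to choose $R$ to be a localization of $\bC[\bs]$ of the form $\bC[\bs][g^{-1}]$, with $g$ selected so that three things hold simultaneously: the Cohen-Macaulay property over the localized ring, the preservation of relative holonomicity, and the requirement that $V = \Spec R$ still meets every codimension one component $Y_i$ of $Z(B_F)$. The fundamental tool will be flat base change: since $\bC[\bs]\to R$ is flat, for any finite free resolution of $M$ by finitely generated free $\sA$-modules (which exists because $\sA$ is Noetherian of finite global dimension $2n+r$), one obtains the identification
\[
\Ext^k_{\sA_R}(M \otimes_{\bC[\bs]} R,\, \sA_R) \;\cong\; E^k \otimes_{\bC[\bs]} R, \quad \text{where } E^k := \Ext^k_{\sA}(M, \sA).
\]

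First I would control the $\bC[\bs]$-support of each $E^k$ with $k\ne n+1$. Auslander regularity of $\sA$ and the equality $j(M)=n+1$ from Theorem \ref{thmMais} force $E^k=0$ for $k<n+1$. For $k>n+1$, Lemma \ref{lm:relhol}(2) shows that $E^k$ is relative holonomic with $\Chr(E^k)\subset \Chr(M)$, while Auslander regularity gives $j(E^k)\ge k$, so $\dim \Chr(E^k)\le 2n+r-k\le n+r-2$. Since every irreducible component of $\Chr(M)=\bigcup_i \Lambda_i\times Z_i$ has an $n$-dimensional conic Lagrangian factor $\Lambda_i$, every irreducible component of $\Chr(E^k)$ has the form $\Lambda_i\times T$ with $\dim T\le r-2$. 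Projecting by $p_2$ and invoking Lemma \ref{lm:Rsupp} for $E^k$ then yields $\dim \supp_{\bC[\bs]}(E^k)\le r-2$.

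Next I would construct $g$. Because $\sA$ has finite global dimension, only finitely many $E^k$ are nonzero, and hence $W := \bigcup_{k\ne n+1}\supp_{\bC[\bs]}(E^k)$ is a Zariski closed subset of $\bC^r$ of dimension at most $r-2$. For each codimension one component $Y_i$ of $Z(B_F)$ (finite in number) choose a point $p_i\in Y_i\setminus W$; such $p_i$ exists because $\dim Y_i = r-1 > \dim(W\cap Y_i)$. Prime avoidance applied to the defining ideal $I(W)\subset\bC[\bs]$ and the maximal ideals $\mathfrak{m}_{p_i}$ produces a single $g\in I(W)$ with $g(p_i)\ne 0$ for all $i$. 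Setting $R := \bC[\bs][g^{-1}]$ and $V := \Spec R$ yields a principal affine open subset that avoids $W$ and meets every $Y_i$.

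Finally I would verify the two structural properties. For relative holonomicity: Remark \ref{rmkF} says a good filtration on $M$ over $\bC[\bs]$ localizes to a good filtration on $M\otimes R$ over $R$ with $\gr(M\otimes R)\simeq (\gr M)\otimes R$, so $\Chr(M\otimes R)=\bigcup_i \Lambda_i\times(Z_i\cap V)$, exactly the form required by Definition \ref{defRHL}. For Cohen-Macaulayness: by the base-change identification and the construction of $g$, one has $\Ext^k_{\sA_R}(M\otimes R,\sA_R)=E^k\otimes R=0$ for every $k\ne n+1$, and $M\otimes R$ is nonzero because $V\cap Z(B_F)\ne\emptyset$ (using Lemma \ref{lm:Rsupp} for $M$ itself). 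The main obstacle is the noncommutative flat base change formula for $\Ext$; once one observes that $\sA$ is Noetherian of finite global dimension so that $M$ admits a finite resolution by finitely generated free modules, the identification follows formally from the fact that $\sA_R\otimes_\sA(-)=R\otimes_{\bC[\bs]}(-)$ is an exact functor, and the rest of the argument is the elementary dimension count above.
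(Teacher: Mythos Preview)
Your proposal is correct and follows essentially the same approach as the paper's proof: both bound the $\bC[\bs]$-support of $E^k=\Ext^k_\sA(M,\sA)$ for $k>n+1$ by combining Auslander regularity, Lemma~\ref{lm:relhol}(2), and the dimension formula in Lemma~\ref{thm:relCh}(1), then remove that small locus by localizing and invoke a base-change/localization identity for $\Ext$. The only cosmetic differences are that the paper routes the vanishing of $E^k\otimes R$ through $\gr(E^k)\otimes R=0$ rather than directly through $\supp_{\bC[\bs]}(E^k)$, and cites Weibel for the localization of $\Ext$ where you argue via a finite projective resolution; these are equivalent justifications of the same step.
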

\begin{proof} Since $M$ is relative holonomic over $\bC[\bs]$, and since good filtrations localize by Remark \ref{rmkF}, it follows that $M\otimes_{\bC[\bs]}R$ is relative holonomic over $R$, if $\Spec R$ is a non-empty open subset of $\bC^r$.
 
 Since $j(M)=n+1$, 
\[\Ext^k_\sA(M,\sA)=0 \textup{ for } k<n+1.\]
By Auslander regularity of $\sA$, if $\Ext^k_\sA(M,\sA))\not=0$ for $k\ge n+1$, then
\[j(\Ext^k_\sA(M,\sA))\ge k.\]
Note that since $\text{gl.dim}(\sA)$ is finite, there are only finitely many $k$ with $\Ext^k_\sA(M,\sA)\ne 0$.
By Lemma \ref{lm:relhol} (2),  if $\Ext^k_\sA(M,\sA)\not=0$, then $\Ext^k_\sA(M,\sA))$ is relative holonomic and 
\[\Chr(\Ext^k_\sA(M,\sA)))\subset \Chr(M).\]
By Lemma \ref{thm:relCh} (1), when $k> n+1$, 
\begin{equation}\label{eqdq}\dim(\Chr(\Ext^k_\sA(M,\sA)))< n+r-1.\end{equation}

By relative holonomicity, the irreducible components of $\Chr(M)$ are $\Lambda_i\times Z_i$ with $i$ in some finite index set $I$, $\Lambda_i\subset T^*X$ irreducible conic Lagrangian, and $Z_i$ irreducible closed in $\bC^r$. Then the irreducible components of  $\Chr(\Ext^k_\sA(M,\sA))$ are $\Lambda_i\times Z_i'$ with $i$ in some subset $J\subset I$, and $Z_i'$ irreducible closed in $Z_i$. By Lemma \ref{lm:Rsupp} applied to $M$ and $\Ext^k_\sA(M,\sA)$, respectively, we have that $Z(B_F)=\cup_{i\in I}Z_i$, and the support in $\bC^r$ of $\Ext^k_\sA(M,\sA)$ is $\cup_{i\in J}Z_i'$. Then $\dim Z(B_F)=r-1$, and $\dim Z_i'< r-1$ for each $k>n+1$ by (\ref{eqdq}). Therefore the support in $\bC^r$ of $\Ext^k_\sA(M,\sA)$ is a proper algebraic subset of $Z(B_F)$ not containing any top-dimensional component of $Z(B_F)$ if $k>n+1$. Choose $V=\Spec R$ to be an open affine subset of $\bC^r$ away from these proper subsets of $Z(B_F)$ for all $k>n+1$. Then for any good filtration we have
$$
(\gr \Ext^k_\sA(M,\sA))\otimes_{\bC[\bs]}R =0
$$
for all $k>n+1$. Since $R$ is the localization of $\bC[\bs]$ with respect to some multiplicatively closed subset $S$, and since good filtrations localize, cf. Remark \ref{rmkF}, we have 
$$\gr (S^{-1} \Ext^k_\sA(M,\sA))=0,$$ and so $$S^{-1} \Ext^k_\sA(M,\sA)=0.$$
Since $S$ is also a multiplicatively closed subset of $\sA$, in the center of $\sA$, and $M$ is finitely generated over the noetherian ring $\sA$, the $\Ext$ module localizes 
$$
0=S^{-1} \Ext^k_\sA(M,\sA) =  \Ext^k_{S^{-1}\sA}(S^{-1}M,S^{-1}\sA),
$$
cf. \cite[Lemma 3.3.8]{HAWei} and the proof of \cite[Proposition 3.3.10]{HAWei}, where one identifies the localization functor $S^{-1}(\_)$ on $\sA$-modules with the flat extension $(\_)\otimes_{\sA}\sA_R=(\_)\otimes_{\bC[\bs]}R$.
Thus $S^{-1}M=M\otimes_{\bC[\bs]}R $ is $(n+1)$-Cohen-Macaulay over $S^{-1}\sA=\sA_R$. 
\end{proof}

Now Lemma \ref{lemGENE} and Proposition \ref{prop:maincm} immediately imply:
\begin{theorem}\label{thmGCz} For every irreducible component $H$ of codimension one of $Z(B_F)$ and for every general point $\bal$ on $H$, 
\[M\otimes_{\bC[\bs]}\bC_\bal\not=0.\]
\end{theorem}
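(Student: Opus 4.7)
The plan is that Theorem \ref{thmGCz} should follow almost immediately by combining Lemma \ref{lemGENE} with Proposition \ref{prop:maincm}, once one checks that general points of codimension one components of $Z(B_F)$ lie in the distinguished open set $V$.

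First I would let $V = \Spec R \subset \bC^r$ be the open affine subset furnished by Lemma \ref{lemGENE}. By construction, $V$ meets every codimension one irreducible component $H$ of $Z(B_F)$ in a nonempty open (hence dense) subset of $H$. So if $\bal$ is a general point of $H$, I may assume $\bal \in V$. Setting $M_R := M \otimes_{\bC[\bs]} R$, Lemma \ref{lemGENE} tells me that $M_R$ is relative holonomic over $R$ and $(n+1)$-Cohen–Macaulay over $\sA_R$, so Proposition \ref{prop:maincm} applies with $l=1$ and $0 \le l \le r$.

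Next I would identify $Z(B_{M_R})$ with $Z(B_F) \cap V$. Since $B_{M_R} = B_F \cdot R$ (localization commutes with taking annihilators for modules presented by a single relation, or simply because localization is flat and $M_R = M \otimes_{\bC[\bs]} R$), the radical ideals defining the two loci agree after restriction to $V$. In particular, $\bal \in H \cap V \subset Z(B_{M_R})$.

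Finally, Proposition \ref{prop:maincm} gives $M_R \otimes_R \bC_\bal \neq 0$, and by associativity of tensor products,
\[
M_R \otimes_R \bC_\bal \;=\; M \otimes_{\bC[\bs]} R \otimes_R \bC_\bal \;\simeq\; M \otimes_{\bC[\bs]} \bC_\bal,
\]
which yields the claim. There is no real obstacle here: the substantive work has already been done in Lemma \ref{lemGENE}, which isolates the Cohen–Macaulay locus containing the generic points of the top-dimensional components of $Z(B_F)$, and in Proposition \ref{prop:maincm}, which supplies the Nakayama-type statement in the Cohen–Macaulay setting. The only point that deserves a word of care is the step $\bal \in V$, which is why the statement only claims the conclusion for \emph{general} $\bal$ on $H$ rather than all of $H$.
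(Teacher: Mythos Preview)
Your proof is correct and matches the paper's approach exactly: the paper's own proof is the single sentence that Lemma~\ref{lemGENE} and Proposition~\ref{prop:maincm} immediately imply the theorem, and you have filled in the routine details of that implication. (One minor wording slip: $M$ is cyclic over $\sA$, i.e., has a single \emph{generator}, not a single relation; it is the cyclicity, together with centrality of $\bC[\bs]$, that makes $B_{M_R}=B_F\cdot R$ under localization.)
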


\subsection{Analytic case.}\label{subAC} Theorem \ref{thmGCz} holds also in the local analytic case. We indicate now the necessary changes in the arguments. The smooth affine algebraic variety $X$ is replaced by the germ $(X,x)$ of a complex manifold of dimension $n$. The rings $R$ stay as before and we let $Y$ denote the complex manifold underlying the smooth affine complex algebraic variety $\Spec (R)$. The rings and modules from the algebraic case $\sD_X$, $\sA_R=\sD_X\otimes_\bC R$, $N$, etc., have natural analytic versions as sheaves on the complex manifold $X$, but their role from the previous arguments will be played by the stalks of these sheaves, $\sD_{X,x}$, $\sA_{R,x}=\sD_{X,x}\otimes_\bC R$, $N_x$, etc.  The role of $\Chr(N)$ from the algebraic case will be played by $\Chr(N)\cap\pi^{-1}(\Omega\times Y)$, for a very small open ball $\Omega$ in $X$ centered at $x$. Recall that for a coherent sheaf of $\sA_R$-modules $N$ on the complex manifold $X$, the relative characteristic variety $\Chr(N)$ is the analytic subspace of $T^*X\times Y$ defined as the zero locus of the radical of the annihilator of $N$ in $\sA_R$.  With these changes, all the statements in this section hold in the local analytic case as well. 

There are however a few special issues arising in this case, since (partial) analytifications of $\sA_R$ and $N$ are needed in order for the module theory as in the Appendix to capture the analytic structure of $\Chr(N)$. For a sheaf of $\sO_X\otimes_\bC R$-modules $L$ on the complex manifold $X$, one defines the (partial) analytification
$$\widetilde{L}=\sO_{X\times Y} \otimes_{p^{-1}(\sO_X\otimes_\bC R)}p^{-1}(L),$$
a  sheaf of $\sO_{X\times Y}$-modules, where $p:X\times Y\ra X$ is the first projection. Thus $\widetilde{\sA_R}$ is the sheaf of relative differential operators $\sD_{X\times Y/Y}$, locally isomorphic to $\sO_{X\times Y}[\partial_1,\ldots,\partial_n]$. The analytification of the filtration on $\sA_R$ is the natural filtration on $\widetilde{\sA_R}$, and $\gr \sA_R$ is locally isomorphic to $\sO_{X\times Y}[\xi_1,\ldots,\xi_n]$, a sheaf of subrings of $\sO_{T^*X\times Y}$, where $\xi_i$ are coordinates of the fibers of the natural projection $\pi:T^*X\times Y\ra X\times Y$. If $N$ is a coherent sheaf of $\sA_R$-modules, then $\widetilde{N}$ is a coherent sheaf of $\widetilde{\sA_R}$-modules. Since $\widetilde{(\_)}$ is an exact functor, it is is compatible with good filtrations, $\gr \widetilde{N}=\widetilde{\gr N}$, the annihilator in $\gr\widetilde{\sA_R}$ of $\gr \widetilde{N}$ is the analytification of the annihilator of $\gr N$ in $\sA_R$, and the radical $J(\widetilde{N})$ of the former is the analytification $\widetilde{J(N)}$ of the radical of the latter. Then $\Chr(N)$ is the analytic subspace of $T^*X\times Y$ defined by the ideal generated by $J(\widetilde{N})$ in $\sO_{T^*X\times Y}$, the full analytification, cf. \cite[I.6.21]{Bj}.

Note that there is a natural isomorphism of $\bC$-algebras $$\gr\sA_{R,x}\simeq\bC\{x_1,\ldots,x_n\}[\xi_1,\ldots,\xi_n]\otimes_\bC R$$
after choosing local coordinates $x_1,\ldots, x_n$ on $X$ at $x$. This ring is a regular commutative integral domain of  dimension $2n+\dim(R)$. Thus all the results in the Appendix apply to this ring, except Proposition \ref{propCo} (ii).  Indeed, $\gr\sA_{R,x}$   has maximal ideals of height less than $\dim(\gr\sA_{R,x})$. (For example, the ideal $(1-x\xi)$ of $\bC\{x\}[\xi]$ is maximal of height 1.) On the other hand, our modules are special: $\gr N_x$ is a graded module if $\gr\sA_{R,x}$ is given the natural grading in the coordinates $\xi_1,\ldots,\xi_n$. The exact functor $\widetilde{(\_)}$ is also faithful on the category of coherent graded $\gr\sA_R$-modules:

\begin{prop}\label{propMFt} (Maisonobe \cite[Lemme 1]{M})
If $M$ is a coherent $\gr\sA_R$-module and $x\in X$, then $M_x=0$ if and only if there exists an open neighborhood $\Omega$ of $x$ in $X$ such that $\widetilde{M}|_{\Omega\times Y}=0$.
\end{prop}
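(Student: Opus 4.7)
My plan is to establish the equivalence by handling the two implications separately, each via a different mechanism. The forward direction uses coherence of $M$ to spread the vanishing from a single stalk to a whole neighborhood and then applies exactness of analytification. The reverse direction extracts the vanishing of $M_x$ stalk by stalk on the $Y$-factor, using faithful flatness of the natural map from $\sO_{X,x}\otimes_\bC R_{\mathfrak{m}_y}$ into the analytic local ring $\sO_{X\times Y,(x,y)}$.

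For the direction $M_x=0\Rightarrow \widetilde{M}|_{\Omega\times Y}=0$, I would first use coherence of $M$ over $\gr\sA_R$ to obtain, on some neighborhood of $x$, a finite presentation
\[\gr\sA_R^a\xrightarrow{\phi}\gr\sA_R^b\to M\to 0.\]
The hypothesis $M_x=0$ is equivalent to surjectivity of $\phi_x$. I would then lift the $b$ standard generators of $\gr\sA_{R,x}^b$ to preimages in $\gr\sA_{R,x}^a$, extend these germs to honest sections over a smaller neighborhood $\Omega$ of $x$, and shrink $\Omega$ further if necessary so that the lifts actually map to the standard generators throughout $\Omega$. This forces $M|_\Omega=0$. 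Since the analytification functor $\widetilde{(\_)}=\sO_{X\times Y}\otimes_{p^{-1}(\sO_X\otimes_\bC R)}p^{-1}(\_)$ is exact (flat tensor composed with exact inverse image), this propagates to $\widetilde{M}|_{\Omega\times Y}=0$.

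For the converse $\widetilde{M}|_{\Omega\times Y}=0\Rightarrow M_x=0$, I would proceed stalk by stalk on the $Y$-factor. For each closed point $y\in Y$, the vanishing at $(x,y)$ reads
\[\sO_{X\times Y,(x,y)}\otimes_{\sO_{X,x}\otimes_\bC R}M_x=0.\]
I would then invoke faithful flatness of the ring map $\sO_{X,x}\otimes_\bC R_{\mathfrak{m}_y}\to \sO_{X\times Y,(x,y)}$, factored through $\sO_{X,x}\otimes_\bC \sO_{Y,y}$: the first arrow is faithfully flat because analytic localization $R_{\mathfrak{m}_y}\to \sO_{Y,y}$ of a regular finitely generated $\bC$-algebra at a closed point is faithfully flat, a property preserved by the flat base change $-\otimes_\bC \sO_{X,x}$; the second arrow is the classical comparison between the algebraic tensor product of analytic local rings and the local ring of the product manifold. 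Faithful flatness lets me descend the vanishing to $M_x\otimes_R R_{\mathfrak{m}_y}=0$ for every closed $y\in Y$. Since $Y$ is affine, Nullstellensatz identifies the closed points of $Y$ with the maximal ideals of $R$, so an $R$-module whose localization at every maximal ideal vanishes must itself be zero. Hence $M_x=0$ as an $R$-module, and a fortiori as a $\gr\sA_{R,x}$-module.

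The main obstacle is the faithful flatness statement for $\sO_{X,x}\otimes_\bC \sO_{Y,y}\to \sO_{X\times Y,(x,y)}$, which is a classical input from local complex analytic geometry that must be cited carefully. Everything else is a formal consequence of coherence, stalk-level surjectivity extension, and exactness of the analytification functor.
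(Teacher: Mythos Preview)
The paper does not prove this proposition; it is cited from Maisonobe \cite[Lemme 1]{M}. So the task reduces to checking whether your argument is correct on its own.

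Your forward direction is fine: coherence spreads the vanishing of $M$ from the stalk at $x$ to a neighborhood $\Omega$, and exactness of $\widetilde{(\_)}$ then kills $\widetilde{M}|_{\Omega\times Y}$.

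Your reverse direction, however, has a genuine gap. The faithful flatness of $\sO_{X,x}\otimes_\bC \sO_{Y,y}\to \sO_{X\times Y,(x,y)}$ (and hence of the composite from $\sO_{X,x}\otimes_\bC R_{\mathfrak{m}_y}$) is \emph{false}, not merely hard to cite. Concretely, take $X=Y=\bC$ with coordinates $z,w$. The element $1-zw$ is a non-unit in $\bC\{z\}\otimes_\bC\bC\{w\}$ (its inverse $\sum_{n\ge 0}(zw)^n$ is not a finite sum of pure tensors) and also a non-unit in $\bC\{z\}\otimes_\bC\bC[w]_{(w)}$, yet it is a unit in $\bC\{z,w\}$. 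So any maximal ideal containing $1-zw$ extends to the unit ideal, and faithful flatness fails.

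This is not just a technical slip: your argument uses only the vanishing of $\widetilde{M}$ along $\{x\}\times Y$, never the full hypothesis $\widetilde{M}|_{\Omega\times Y}=0$. The example $M=\gr\sA_R/(\xi,\,1-zw)$ with $R=\bC[w]$ (a coherent $\gr\sA_R$-module, even graded in $\xi$) shows the weaker hypothesis is insufficient: one has $M_0=\bC\{z\}[w]/(1-zw)\neq 0$, yet $\widetilde{M}_{(0,y)}=\bC\{z,w-y\}/(1-zw)=0$ for every $y\in\bC$ since $1-zw$ is a unit in each such stalk. (The proposition is not contradicted, because $\widetilde{M}_{(x,1/x)}\neq 0$ for small $x\neq 0$, so no neighborhood $\Omega$ works.) To repair the argument you must genuinely use points $(x',y)$ with $x'$ near $x$, not just $x'=x$; this is where the graded structure in the $\xi$-variables and a support/Nullstellensatz argument on $\Omega\times Y$ come in.
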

Thus one obtains, cf. \cite[Proposition 2]{M}: for a small enough $\Omega$,  
$$
j_{\gr\sA_{R,x}}(\gr N_x) = \inf_{(x',y)\in\Omega\times Y} j_{(\gr \widetilde{\sA_R})_{(x',y)}} ((\gr\widetilde{N})_{(x',y)}).
$$
The  stalks  $(\gr\widetilde{N})_{(x',y)}$  determine  the local analytic structure at $(x',0,y)$ of the conical set $\Chr(N)$,  since the extension functor from the category of graded  coherent sheaves over $\gr \widetilde{\sA_R}$ into the category of coherent sheaves over $\sO_{T^*X\times Y}$ is also  faithful besides being exact, by the Nullstellensatz for conical analytic sets, cf. \cite[Remark I.1.6.8]{Bj}. In particular, there is a 1-1 correspondence between conical  analytic sets in $T^*X\times Y$ and radical graded coherent ideals in $\gr\widetilde{\sA_R}$. Therefore the ring $(\gr \widetilde{\sA_R})_{(x',y)}$ and the module $(\gr\widetilde{N})_{(x',y)}$ can be replaced by  their localization at the unique graded maximal ideal (cf. \cite[1.5]{BH}) and in this context Proposition \ref{propCo} (ii) does apply. A consequence is that Lemma \ref{thm:relCh} (1) holds indeed with the changes we have mentioned: for a small neighborhood $\Omega$ of $x$,
$$
j_{\sA_{R,x}} (N_x) + \dim (\Chr(N)\cap \pi^{-1}(\Omega\times Y))=2n+\dim(R).
$$
This is \cite[Proposition 2, Th\'eor\`eme 1]{M}, where $R=\bC[\bs]$ but the proof applies in general, and we used semicontinuity of the dimension function \cite[p.94]{GR} to rephrase the statement slightly.

Next, in keeping up with the changes indicated, the condition ``regular holonomic"   will be replaced by the  condition that a coherent module $N$ over $\sA_R$ is {\it regular holonomic  at $x$}, that is, there exists a neighborhood $\Omega$ of $x$ such that $\Chr(N)\cap\pi^{-1} (\Omega\times Y)$ is as in Definition \ref{defRHL}. 

The condition ``$j$-Cohen-Macaulay"  will be replaced by the condition that $N$ is {\it $j$-Cohen-Macaulay at $x$}, that is, $N_x$ is $j$-Cohen-Macaulay. This is equivalent to $N$ being $j$-Cohen-Macaulay on some neighborhood $\Omega$ of $x$, that is, $j$-Cohen-Macaulay  at all points $x'$ in $\Omega\cap\supp (N)$. Note that the support of $N$ is an analytic subset of $X$ by Proposition \ref{propMFt}, since the support of $\widetilde{N}$ is an analytic subset of $X\times Y$ by the conical property of $\Chr(N)$. Moreover, $N$ is $j$-Cohen-Macaulay on $\Omega$ if and only if one of the following two equivalent conditions hold for $k\neq j$: $\mathscr{E}xt^k_{\sA_R}(N,\sA_R)|_\Omega=0$;  $\mathscr{E}xt^k_{\sA_R}(N,\sA_R)_{x'}=0$ for all $x'\in \Omega$.
Also, $N$ is $j$-Cohen-Macaulay at $x$ if and only if $\widetilde{N}$ is $j$-Cohen-Macaulay on $\Omega\times Y$ for some $\Omega\ni x$, by Proposition \ref{propMFt}. This implies, by applying Proposition \ref{propCo} in the context mentioned above, that  Remark \ref{remdefnImp} holds in the local analytic case; in particular, if $N$ is $j$-Cohen-Macaulay at $x$, then $\Chr(N)\cap\pi^{-1}(\Omega\times Y)$ is equidimensional of codimension $j$.

With the changes we have indicated, the rest of the arguments remain as before, and all statements in this section are true in this case.

\subsection{Proof of Theorem \ref{thmconj}.}  By Theorem \ref{thmGCz} and  Proposition \ref{propBS}, the image under $\Exp$ of a non-empty open subset of each irreducible component of codimension one of $Z(B_F)$ lies in $\cS(F)$. By the description of $Z(B_F)$ from Theorem \ref{thmMais} and the paragraphs preceding it, it follows that $\Exp(Z(B_F))$ is included in $\cS(F)$. $\hfill\Box$

\section{Appendix}\label{secApp}

We recall some facts for not-necessarily commutative rings from \cite[A.III and A.IV]{Bj} that we use in the proof of the main theorem. 

\subsection{}\label{subNoe} 

Let $A$ be a ring, by which we mean an associative ring with a unit element. Let $\Mod_{f}(A)$ be the abelian category of finitely generated left $A$-modules.

We say that $A$ is a {\it positively filtered ring} if $A$ is endowed with a  $\bZ$-indexed increasing exhaustive filtration $\{F_iA\}_{i\in\bZ}$ of additive subgroups  such that $F_iA\cdot F_jA\subset F_{i+j}A$ for all $i, j$ in $\bZ$, and $F_{-1}A=0$. The associated graded object $\grF  A=\oplus_i (F_iA/F_{i-1}A)$ has a natural ring structure. When we do not need to specify the filtration, we write $\gr A$ for $\grF A$.

If $A$ is a positively filtered ring such that $\gr A$ is noetherian, then $A$ is noetherian, \cite[A.III 1.27]{Bj}. Here, noetherian means both left and right noetherian.

\subsection{}\label{subFlt} Let $A$ be a noetherian ring, positively filtered.  A {\it good filtration} on $M\in\Mod_f(A)$ is an increasing exhaustive filtration $F_\bullet M$ of additive subgroups such that $F_iA\cdot F_jM\subset F_{i+j}M$ for all $i, j$ in $\bZ$, and such that its associated graded object $\gr M$ is a finitely generated graded module over $\gr A$, cf. \cite[A.III 1.29]{Bj}.

\begin{prop}\label{lem:relCh} (\cite[A.III 3.20 - 3.23]{Bj})
Let $A$ be a noetherian ring, positively filtered. 
\begin{enumerate}
\item  Let $M$ be in $\Mod_f(A)$ with a good filtration. Then the radical of the annihilator ideal in $\gr A$
$$
J(M):=\sqrt{\Ann_{\gr A}(\gr M)}
$$
and the multiplicities $m_\mathfrak{p}(M)$ of $\gr  M$ at minimal primes $\mathfrak{p}$ of $J(M)$ do not depend on the choice of a good filtration.
\item If $$0\to M'\to M\to M''\to 0$$
is an exact sequence in $\Mod_f(A)$ then
$$
J(M)=J(M')\cap J(M'')
$$ and  if $\mathfrak{p}$ is a minimal prime of $J(M)$ then \[m_\mathfrak{p}(M)=m_\mathfrak{p}( M')+m_\mathfrak{p}( M'').\]
\end{enumerate}
\end{prop}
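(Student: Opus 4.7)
I would prove part (2) first, since the short exact sequence structure underlies the comparison arguments needed for part (1). Given $0 \to M' \to M \to M'' \to 0$ in $\Mod_f(A)$, pick any good filtration $F$ on $M$, and define the induced filtrations $F_iM' := F_iM \cap M'$ and $F_iM'' := \textup{image}(F_iM \to M'')$. The filtration on $M''$ is visibly good; goodness of the induced filtration on $M'$ is the filtered Artin--Rees lemma, which follows from noetherianity by viewing the graded submodule $\bigoplus_i (F_iM \cap M')$ inside the finitely generated graded module $\bigoplus_i F_iM$ over the noetherian graded ring $\bigoplus_i F_iA$. One then obtains the short exact sequence of finitely generated graded $\gr A$-modules
\[
0 \to \gr M' \to \gr M \to \gr M'' \to 0.
\]
Since $\gr M'$ and $\gr M''$ are each annihilated by $\Ann_{\gr A}(\gr M)$, and conversely $\Ann(\gr M') \cdot \Ann(\gr M'') \subset \Ann(\gr M)$, taking radicals yields $J(M) = J(M') \cap J(M'')$. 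For the multiplicity claim, localize the exact sequence at a minimal prime $\mathfrak{p}$ of $J(M)$: the three localizations have finite length over the artinian quotient $(\gr A)_\mathfrak{p}/\mathfrak{p}^N(\gr A)_\mathfrak{p}$ for $N$ large, and additivity of length gives $m_\mathfrak{p}(M) = m_\mathfrak{p}(M') + m_\mathfrak{p}(M'')$.

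For part (1), the plan is to compare two good filtrations $F$ and $G$ on $M$ through an interpolation argument. First I would establish a comparability lemma: there exists $k \ge 0$ with $F_iM \subset G_{i+k}M$ and $G_iM \subset F_{i+k}M$ for all $i$. This follows from exhaustiveness combined with finite generation of $\gr M$ over $\gr A$, since a finite set of homogeneous lifts of generators of the associated graded must lie in some $G_{\le d}M$, and symmetrically. Next, I would define the family of interpolating filtrations $H^n_iM := F_iM + G_{i-n}M$ for $n \in \bZ$; each $H^n$ is a good filtration (its Rees module is the sum of two finitely generated graded modules over the Rees ring of $A$), and for $|n|$ sufficiently large $H^n$ coincides with a shift of $F$ or of $G$. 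The passage from $H^n$ to $H^{n+1}$ fits into short exact sequences whose kernels and cokernels are graded quotients of shifts of associated graded pieces already analyzed; using part (2) one then sees that the radical annihilator and the multiplicities at minimal primes are preserved at each step. Combined with the observation that a grading shift leaves $J$ and minimal-prime multiplicities unchanged, iterating the interpolation from $F$ to $G$ yields the independence claim in (1).

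The main obstacle will be the multiplicity preservation through each interpolation step in (1). Independence of $J(M)$ alone is relatively direct from the comparability lemma, since equivalent filtrations produce associated graded modules with identical supports. The delicate point is that the natural comparison maps between the associated graded modules of $H^n$ and $H^{n+1}$ are typically neither injective nor surjective, so tracking multiplicities requires identifying the discrepancies as quotients of shifts of previously understood graded modules and then applying part (2) to cancel contributions of subquotients supported on strictly smaller subvarieties of $J(M)$, which cannot affect $m_\mathfrak{p}$ at a minimal prime $\mathfrak{p}$. This feedback from (2) into (1) is the reason for proving (2) first and using it as an additivity tool during the interpolation.
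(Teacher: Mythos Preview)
The paper does not give its own proof of this proposition: it is stated in the appendix as a review item and attributed directly to \cite[A.III 3.20--3.23]{Bj}. Your plan is essentially the standard argument one finds in Bj\"ork, so there is nothing to contrast at the level of strategy.

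One organizational comment. You propose to prove (2) before (1), but as written the quantities $J(M')$, $J(M'')$, $m_\mathfrak{p}(M')$, $m_\mathfrak{p}(M'')$ in (2) are only well-defined once (1) is known. What you actually establish first is a \emph{relative} version of (2): for the induced filtrations on $M'$ and $M''$ coming from a single good filtration on $M$, the graded short exact sequence gives the radical and multiplicity identities. This is exactly what your interpolation argument for (1) consumes, and once (1) is done the absolute statement of (2) follows. You clearly understand this (you describe the ``feedback from (2) into (1)''), but it would be cleaner to state the relative lemma separately rather than call it ``(2) first.''

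A small technical point to watch in the interpolation step: the passage from $H^n$ to $H^{n+1}$ does not directly give a short exact sequence of the two associated gradeds; rather one compares both to the bigraded pieces $F_iM \cap G_jM / (F_{i-1}M \cap G_jM + F_iM \cap G_{j-1}M)$, and it is the common refinement by this bigraded object that makes the multiplicity bookkeeping go through. Your sketch gestures at this (``discrepancies as quotients of shifts of previously understood graded modules''), and the details are in the cited passage of \cite{Bj}.
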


Note that the last assertion is equivalent to the existence of a $\bZ$-valued additive map $m_{\mathfrak p}$ on the Grothendieck group generated by the finitely generated modules $N$ over $\gr A$ with $J(M)\subset \sqrt{\Ann_{\gr A} N}$, as it is phrased in {\it loc. cit.}

\begin{prop}\label{propExGr} (\cite[A.IV 4.5]{Bj}) Let $A$ be a noetherian ring, positively filtered. Let $M$ be in $\Mod_f(A)$ with a good filtration. For every $k\ge 0$, there exists a good filtration on the right $A$-module $\Ext^k_A(M,A)$ such that $\gr(\Ext^k_A(M,A))$ is a subquotient of $\Ext^k_{\gr A}(\gr M,\gr A)$.
\end{prop}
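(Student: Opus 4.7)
The plan is to build a filtered free resolution of $M$, dualize it, and use the filtered-complex spectral sequence to deduce the subquotient statement for the cohomology.

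First, using that $\gr M$ is finitely generated over the noetherian ring $\gr A$, I would lift a finite set of homogeneous generators of $\gr M$ to elements $m_j \in F_{d_j}M$ and define $P_0 = \bigoplus_j A(d_j)$, where $A(d)$ denotes $A$ with filtration shifted by $d$ (i.e.\ $F_kA(d)=F_{k+d}A$). The surjection $P_0 \to M$ sending basis elements to the $m_j$ is strict, meaning $d(F_kP_0) = F_kM$, and by construction $\gr P_0 \to \gr M$ is surjective. Since $A$ is noetherian, the kernel inherits a good filtration (it is a finitely generated submodule of $P_0$, and its intersection with the filtration of $P_0$ is a good filtration). Iterating produces a resolution $P_\bullet \to M \to 0$ by finitely generated filtered free $A$-modules whose differentials are strict filtered maps; equivalently, the associated graded complex $\gr P_\bullet \to \gr M \to 0$ is a free resolution of $\gr M$ over $\gr A$. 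So far this is standard; see e.g.\ Bj\"ork's construction in the analogous context.

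Next, I would apply $\mathrm{Hom}_A(-,A)$. Because each $P_i$ is finitely generated free, we have $\mathrm{Hom}_A(P_i,A) \cong \bigoplus_j A(-d_{ij})$ with its natural filtration, and the canonical map $\gr \mathrm{Hom}_A(P_i, A) \to \mathrm{Hom}_{\gr A}(\gr P_i, \gr A)$ is an isomorphism (this is the crucial place where finite generation of $P_i$ is used). The resulting complex $C^\bullet := \mathrm{Hom}_A(P_\bullet, A)$ is a complex of filtered right $A$-modules computing $\Ext^k_A(M,A)$, and its associated graded complex $\gr C^\bullet$ is $\mathrm{Hom}_{\gr A}(\gr P_\bullet, \gr A)$, whose cohomology is $\Ext^k_{\gr A}(\gr M, \gr A)$.

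Now I would invoke the filtered-complex spectral sequence
\[
E_1^{p,q} = H^{p+q}(\gr^F_p C^\bullet) \Longrightarrow H^{p+q}(C^\bullet) = \Ext^{p+q}_A(M,A),
\]
where the filtration on the abutment is the standard one induced from the subcomplex filtration, namely $F_p\Ext^k_A(M,A) = \mathrm{image}\bigl(\ker d^k \cap F_pC^k \to \Ext^k_A(M,A)\bigr)$. The filtration on each $C^i$ is exhaustive and bounded below (the latter since the $P_i$ have finitely many shifts and $F_{-1}A = 0$), hence regular, so the spectral sequence converges and its $E_\infty$ page is canonically isomorphic to $\gr \Ext^k_A(M,A)$ for the induced filtration. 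Since each $E_r$ is a subquotient of the preceding one, $E_\infty$ is a subquotient of $E_1 = \Ext^{p+q}_{\gr A}(\gr M, \gr A)$, which gives the desired subquotient statement.

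Finally, to check that the induced filtration on $\Ext^k_A(M,A)$ is good, I note that $\Ext^k_{\gr A}(\gr M, \gr A)$ is finitely generated over the noetherian ring $\gr A$, and any subquotient of a finitely generated graded module is finitely generated; combined with exhaustiveness and boundedness-below of the filtration, this shows the filtration is good. The main technical subtlety is the passage from a strict resolution to the dualized complex: the differentials of $C^\bullet$ need not remain strict after $\mathrm{Hom}_A(-,A)$, which is precisely why one only gets a subquotient (the $d_r$ for $r \geq 2$ in the spectral sequence can be nonzero), not an equality. Convergence of the spectral sequence is not an obstacle once the bounded-below-ness of the filtrations on each $C^i$ is observed, which is inherited from the good filtration on each $P_i$.
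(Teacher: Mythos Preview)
The paper does not prove this proposition; it is stated in the appendix as a citation to \cite[A.IV 4.5]{Bj} with no argument given. Your proof is correct and is essentially the standard argument one finds in Bj\"ork: construct a strict filtered free resolution $P_\bullet\to M$ so that $\gr P_\bullet$ resolves $\gr M$, dualize, and read off the subquotient statement from the spectral sequence of the filtered complex $\mathrm{Hom}_A(P_\bullet,A)$, with goodness of the induced filtration following since $E_\infty$ is a subquotient of the finitely generated $\gr A$-module $E_1$. Your observation that strictness is lost after dualizing, which is why only a subquotient survives, is exactly the point.
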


\subsection{}\label{subGrade} Let $A$ be a noetherian ring. The smallest $k\ge 0$ for which every $M$ in $\Mod_f(A)$ has a projective resolution of length $\le k$ is called the {\it homological dimension} of $A$ and it is denoted by
$
\textup{gl.dim}(A).
$

\begin{defn}
For a nonzero $M$ in $\Mod_f(A)$, the smallest integer $k\ge 0$ such that $\Ext^k_A(M,A)\ne 0$ is  denoted $$j_A(M)$$
and it is called the {\it grade number} of $M$. If $M=0$ the grade number is taken to be $\infty$.
\end{defn}

The ring $A$ is {\it Auslander regular} if it has finite homological dimension and, for every $M$ in $\Mod_f(A)$, every $k\ge 0$, and every nonzero right submodule $N$ of $\Ext^k_A(M,A)$, one has $j_A(N)\ge k$. This implies the similar condition phrased for right $A$-modules $M$, see \cite[A.IV 1.10]{Bj} and the comment thereafter.

\begin{theorem}\label{thrmGAAR}
(\cite[A.IV 5.1]{Bj}) If $A$ is a positively filtered ring such that $\gr A$ is a regular commutative ring, then $A$ is an Auslander regular ring.
\end{theorem}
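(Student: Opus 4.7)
The plan is to verify both defining properties of an Auslander regular ring by lifting each from $\gr A$ to $A$ via the standard filtered-to-graded principle. For the finiteness of homological dimension, given $M \in \Mod_f(A)$ equipped with a good filtration, $\gr M$ is finitely generated over the regular commutative noetherian ring $\gr A$ and hence admits a projective resolution of length at most $\textup{gl.dim}(\gr A)$. Lift this to a bounded filtered free resolution of $M$; since the filtration on $A$ is positive, a standard filtered Nakayama-type argument shows that a filtered complex whose associated graded is acyclic is itself acyclic. This yields $\textup{gl.dim}(A) \leq \textup{gl.dim}(\gr A) < \infty$.

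For the Auslander condition, the crucial input is that the commutative regular ring $\gr A$ is itself Auslander regular---a classical fact following from local duality and the Cohen--Macaulay equality of grade with $\codim \supp$, which shows $\codim \supp \Ext^k_{\gr A}(M', \gr A) \geq k$ and cannot decrease upon restriction to submodules. Given a nonzero right submodule $N \subseteq \Ext^k_A(M, A)$, I would use Proposition \ref{propExGr} to fix a good filtration on $\Ext^k_A(M, A)$ whose associated graded is a subquotient of $\Ext^k_{\gr A}(\gr M, \gr A)$. Equip $N$ with the induced intersection filtration $F_i N := N \cap F_i \Ext^k_A(M, A)$; by a graded Artin--Rees argument using noetherianness of $\gr A$, this is again a good filtration, and $\gr N$ embeds as a subquotient of $\Ext^k_{\gr A}(\gr M, \gr A)$. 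The Auslander property of $\gr A$ then forces $\Ext^j_{\gr A}(\gr N, \gr A) = 0$ for all $j < k$.

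To conclude, apply Proposition \ref{propExGr} a second time, now to $N$ itself: the associated graded $\gr \Ext^j_A(N, A)$ is a subquotient of $\Ext^j_{\gr A}(\gr N, \gr A)$, which vanishes for $j < k$. Because the filtration on $\Ext^j_A(N, A)$ is exhaustive and bounded below, vanishing of its associated graded forces $\Ext^j_A(N, A) = 0$ for $j < k$, so $j_A(N) \geq k$ as required. I expect the main obstacles to lie in the two more technical setup steps: the graded Artin--Rees argument ensuring that the intersection filtration on $N$ is good, and the verification that $\gr A$ is itself Auslander regular---both classical but neither immediate, each requiring a careful reduction to local data and an invocation of standard commutative-algebra machinery.
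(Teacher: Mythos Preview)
The paper does not prove this theorem; it is stated in the Appendix as a result quoted from \cite[A.IV 5.1]{Bj}. Your outline is correct and is essentially the argument given in Bj\"ork's reference. Two remarks: the Auslander regularity of the commutative regular ring $\gr A$ is already recorded in the paper as Proposition~\ref{propCo}~(i), so you may simply cite it rather than rederive it from local duality; and because the filtration on $A$ is positive and $\gr A$ is noetherian, the induced filtration $F_iN = N \cap F_i\Ext^k_A(M,A)$ is automatically good---indeed $\gr N$ then embeds in $\gr \Ext^k_A(M,A)$, which is finitely generated over $\gr A$---so no Artin--Rees argument is needed.
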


\begin{prop}\label{eqGldim} (\cite[A.IV 1.11]{Bj}) Let $A$ be an Auslander regular ring. Then
\[\textup{gl.dim}(A)=\sup \{j_A(M)\mid 0\neq M\in \Mod_f(A)\}.\]
\end{prop}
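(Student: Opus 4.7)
The plan splits the equality into two inequalities. The direction $\sup j_A(M) \leq \textup{gl.dim}(A)$ is immediate from the definitions: for any $0 \neq M \in \Mod_f(A)$, $M$ admits a projective resolution of length at most $d := \textup{gl.dim}(A)$, so $\Ext^k_A(M, A) = 0$ for $k > d$, forcing $j_A(M) \leq d$.

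For the reverse inequality I would aim to produce some $M' \in \Mod_f(A)$ with $j_A(M') = d$. By the very definition of the global dimension, there exists $M_0 \in \Mod_f(A)$ of projective dimension exactly $d$, and consequently some $A$-module $N$ with $\Ext^d_A(M_0, N) \neq 0$. The key technical step is to upgrade this to $\Ext^d_A(M_0, A) \neq 0$. Surjecting a free module $A^{(I)} \twoheadrightarrow N$ with kernel $K$, and using that the bound $\textup{pd}(M_0) = d$ kills $\Ext^{d+1}_A(M_0, K)$, the long exact sequence yields a surjection
\[
\Ext^d_A(M_0, A^{(I)}) \twoheadrightarrow \Ext^d_A(M_0, N).
\]
Since $A$ is noetherian and $M_0$ is finitely generated, $M_0$ admits a resolution $P_\bullet \to M_0$ by finitely generated projectives, and $\textup{Hom}_A(P_i, -)$ commutes with arbitrary direct sums in the second argument because $P_i$ is finitely generated. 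Consequently $\Ext^d_A(M_0, A^{(I)}) \cong \Ext^d_A(M_0, A)^{(I)}$, so non-vanishing on the left forces $\Ext^d_A(M_0, A) \neq 0$.

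The argument is then completed by a double application of the Auslander condition. Set $N^d := \Ext^d_A(M_0, A)$, a nonzero finitely generated right $A$-module. Auslander regularity applied to $M_0$ at $k = d$, with $N^d$ viewed as a submodule of itself, yields $j_A(N^d) \geq d$; the easy direction applied to the opposite ring forces $j_A(N^d) \leq d$, so $j_A(N^d) = d$. Thus $M' := \Ext^d_{A^{\textup{op}}}(N^d, A)$ is a nonzero finitely generated left $A$-module, and a second application of Auslander regularity, now to the right module $N^d$ at $k = d$ with $M'$ as a submodule of itself, gives $j_A(M') \geq d$, matching the easy upper bound.

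I expect the main obstacle to be the step $\Ext^d_A(M_0, A) \neq 0$: having $\textup{pd}(M_0) = d$ only guarantees non-vanishing of $\Ext^d_A(M_0, -)$ on some test module, and the passage to $A$ genuinely requires both the finite generation of $M_0$ (to commute $\Ext^d$ with the direct sum $A^{(I)}$) and the vanishing of $\Ext^{d+1}_A(M_0, -)$. Once this non-vanishing is in hand, the rest of the argument is a formal double-dualization through the Auslander condition.
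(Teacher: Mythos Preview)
The paper does not supply its own proof of this proposition; it is stated with a citation to Bj\"ork \cite[A.IV 1.11]{Bj} and no argument is given. So there is nothing to compare against directly.

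Your argument is correct. The easy inequality $\sup_M j_A(M)\le \textup{gl.dim}(A)$ is immediate. For the reverse direction, your chain of reductions is sound: choosing $M_0\in\Mod_f(A)$ of projective dimension exactly $d$, the step $\Ext^d_A(M_0,A)\neq 0$ is the only nontrivial point, and your proof of it (surject a free module onto a test module $N$ with $\Ext^d_A(M_0,N)\neq 0$, use $\textup{pd}(M_0)=d$ to kill $\Ext^{d+1}$, then commute $\Ext^d$ with the direct sum using a resolution of $M_0$ by finitely generated projectives) is the standard one and works over any noetherian ring. The double application of the Auslander condition is also correct, and your care about left versus right modules is justified: $\Mod_f(A)$ in the paper means left modules, so one pass through $\Ext^d$ lands you among right modules and a second pass is needed to return. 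The required symmetry of the Auslander condition is explicitly noted in the paper after the definition, and $\textup{gl.dim}(A)=\textup{gl.dim}(A^{\textup{op}})$ for noetherian $A$ handles the ``easy direction'' on the right side.
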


\begin{definition}\label{defjpure} 
A nonzero module $M$  in $\Mod_f(A)$ is {\it $j$-pure} (or simply, {\it pure}) if $j_A(N)=j_A(M)=j$ for every nonzero submodule $N$. 
\end{definition}

\begin{lemma}\label{lemEpure}(\cite[A.IV 2.6]{Bj}) Let $A$ be an Auslander regular ring, $M$ nonzero in $\Mod_f(A)$, and $j=j_A(M)$. Then:
\begin{itemize}
\item[(1)]  $\Ext^j_A(M,A)$ is a $j$-pure right $A$-module;
\item[(2)] $M$ is pure if and only if $\Ext^k_A(\Ext^k_A(M,A),A)=0$ for every $k\ne j$.
\end{itemize}
\end{lemma}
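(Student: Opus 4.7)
The plan is to use the biduality (Grothendieck) spectral sequence
\[
E_2^{p,q} = \Ext^p_A(\Ext^q_A(M,A),A) \;\Longrightarrow\; H^{p-q}(M),
\]
with abutment $M$ in total degree $0$ and zero otherwise; this exists because $\textup{gl.dim}(A)<\infty$ forces $M\simeq\Rhom_A(\Rhom_A(M,A),A)$ in the derived category. Two applications of Auslander regularity restrict the support of this spectral sequence to the cone $\{p\ge q\ge j\}$, where $j=j_A(M)$: the columns $q<j$ vanish by definition of grade, and the terms with $p<q$ vanish because $j_A(\Ext^q_A(M,A))\ge q$. Inspection of the differentials $d_r\colon E_r^{p,q}\to E_r^{p+r,q-r+1}$ shows that no differential ever enters a diagonal position $(k,k)$ for $r\ge 1$ (the putative source $(k-r,k+r-1)$ lies outside the support cone), so each $E_\infty^{k,k}$ is a subobject of $E_2^{k,k}=\Ext^k_A(\Ext^k_A(M,A),A)$. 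Under the identification of the abutment filtration with the grade filtration $M=M^{(j)}\supset M^{(j+1)}\supset\cdots$, with $M^{(k)}/M^{(k+1)}\simeq E_\infty^{k,k}$, the bottom step gives $E_2^{j,j}=E_\infty^{j,j}=M/M^{(j+1)}\ne 0$, so in particular $j_A(\Ext^j_A(M,A))=j$.

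For part (1), set $E=\Ext^j_A(M,A)$ and let $0\ne N\subset E$. The inequality $j_A(N)\ge j$ is immediate from the Auslander hypothesis. For the reverse inequality, suppose for contradiction $j_A(N)>j$, so $\Ext^j_A(N,A)=0$. The long exact sequence of $0\to N\to E\to E/N\to 0$ together with $j_A(N),j_A(E)\ge j$ forces $j_A(E/N)\ge j$ and yields an isomorphism $\Ext^j_A(E/N,A)\xrightarrow{\sim}\Ext^j_A(E,A)$. Hence the edge map $M\twoheadrightarrow M/M^{(j+1)}\hookrightarrow\Ext^j_A(E,A)$ factors through $\Ext^j_A(E/N,A)$, and unwinding this via the spectral sequence for $E$ itself (where the same cone analysis identifies $E_\infty^{k,k}(E)$ as a subobject of $E_2^{k,k}(E)$) produces a contradiction: every element of $N$ contributes a nonzero summand at the $(j,j)$-position of the biduality of $E$, incompatible with the factorization of this map through the quotient $E\twoheadrightarrow E/N$. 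Thus $j_A(N)=j$ and $E$ is $j$-pure.

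For part (2), the direction $(\Leftarrow)$ is immediate: $E_2^{k,k}=0$ for $k>j$ forces $E_\infty^{k,k}=0$, so $M^{(k)}=M^{(k+1)}$ for $k>j$, and by finiteness of the filtration $M^{(j+1)}=0$, whence $M$ is $j$-pure. For $(\Rightarrow)$, apply part (1) to the right $A$-module $\Ext^k_A(M,A)$ (whose grade is at least $k$ by Auslander): when nonzero, $E_2^{k,k}=\Ext^k_A(\Ext^k_A(M,A),A)$ is $k$-pure. A nonzero $k$-pure $E_2^{k,k}$ with $k>j$ cannot be entirely absorbed by the outgoing differentials from the diagonal without producing a $k$-pure subquotient in some graded piece $M^{(p)}/M^{(p+1)}$ at level $p\ge k>j$, contradicting $j$-purity of $M$. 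Hence $E_2^{k,k}=0$ for all $k>j$; the case $k<j$ is automatic because $\Ext^k_A(M,A)=0$ already.

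The principal obstacle will be the diagram chase in part (1) tying the factorization through $\Ext^j_A(E/N,A)$ to an explicit contradiction with $N\ne 0$, and the parallel grade-filtration argument in the forward direction of (2). Both hinge on the identification of the grade filtration with the abutment filtration of the biduality spectral sequence, so that any $k$-pure subquotient produced by the spectral sequence actually occupies the $k$-th level of the grade filtration of $M$; this is the technical backbone of Björk's treatment in Chapter A.IV.
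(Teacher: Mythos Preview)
The paper does not supply a proof of this lemma; it is quoted from Bj\"ork \cite[A.IV 2.6]{Bj}. Your framework---the biduality spectral sequence $E_2^{p,q}=\Ext^p_A(\Ext^q_A(M,A),A)\Rightarrow M$, cut down by the Auslander condition to the cone $\{p\ge q\ge j\}$, with $E_\infty^{k,k}$ a subobject of $E_2^{k,k}$ and $E_\infty^{j,j}=E_2^{j,j}\ne 0$---is precisely Bj\"ork's, and your argument for the implication $(\Leftarrow)$ in (2) is correct.

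There is, however, a genuine gap in your argument for (1). You must show that no nonzero submodule $N\subset E=\Ext^j_A(M,A)$ has $j_A(N)>j$; equivalently, that $E^{(j+1)}=0$. Your contradiction asserts that ``every element of $N$ contributes a nonzero summand at the $(j,j)$-position of the biduality of $E$''. But the $(j,j)$-edge map for $E$ is exactly the quotient $E\twoheadrightarrow E/E^{(j+1)}=E_2^{j,j}(E)$, and any $N$ with $j_A(N)>j$ lies inside the kernel $E^{(j+1)}$ and is sent to zero. The factorization you produce through $\Ext^j_A(E/N,A)$ is perfectly compatible with this; it yields no contradiction. What you are implicitly using is that the natural map $E\to E_2^{j,j}(E)$ coincides with the abstract isomorphism $E\cong \Ext^j_A(M/M^{(j+1)},A)\cong \Ext^j_A(\Ext^j_A(E,A),A)$ obtained by chasing identifications---but that identification \emph{is} the statement $E^{(j+1)}=0$, so the argument is circular. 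Bj\"ork's proof requires a more careful inductive comparison of the grade filtrations on $M$ and on the $\Ext^q_A(M,A)$.

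The same defect undermines your forward direction in (2). The claim that a nonzero $k$-pure $E_2^{k,k}$ ``cannot be entirely absorbed by outgoing differentials without producing a $k$-pure subquotient in some $M^{(p)}/M^{(p+1)}$'' is unsupported: the outgoing differentials from the diagonal land \emph{off} the diagonal, at positions whose $E_\infty$-terms already vanish because the abutment is concentrated in total degree zero, and nothing in your argument forces any residue back onto a diagonal position.
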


\subsection{} We assume now that $A$ is a positively filtered ring such that $\gr A$ is a regular commutative  ring. Then $A$ is also Auslander regular by Theorem \ref{thrmGAAR}. Moreover, with these assumptions one has the following two results.

\begin{prop}\label{propPGF} (\cite[A.IV 4.10 and 4.11]{Bj}) If $M$ in $\Mod_f(A)$ is $j$-pure, there exists a good filtration on $M$ such that $\gr M$ is a $j$-pure $\gr A$-module.
\end{prop}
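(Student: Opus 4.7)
The plan is to exploit biduality for pure modules over the Auslander regular ring $A$ and transfer the $j$-purity downstairs via Proposition~\ref{propExGr}, applied to a well-chosen auxiliary module rather than to $M$ directly.

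First, since $M$ is $j$-pure, the right $A$-module $E := \Ext^{j}_{A}(M,A)$ is itself nonzero and $j$-pure by Lemma~\ref{lemEpure}(1). The Cartan--Eilenberg biduality spectral sequence $E_{2}^{p,q}=\Ext^{p}_{A}(\Ext^{q}_{A}(M,A),A)$ converges to $M$ (in the appropriate total degree), since $A$ has finite homological dimension by Proposition~\ref{eqGldim}; combined with the vanishings of Lemma~\ref{lemEpure}(2), it degenerates at $E_{2}$ along the strand $q=j$, yielding a natural isomorphism $M \simeq \Ext^{j}_{A}(E,A)$.

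Next, I would choose an arbitrary good filtration on the right $A$-module $E$ and apply Proposition~\ref{propExGr} to $E$. This produces an induced good filtration on $\Ext^{j}_{A}(E,A)\simeq M$ whose associated graded $\gr M$ sits as a subquotient of $\Ext^{j}_{\gr A}(\gr E,\,\gr A)$. This is the candidate good filtration on $M$ we would propose. Since $\gr A$ is a regular commutative ring, it is Auslander regular by Theorem~\ref{thrmGAAR}, so Lemma~\ref{lemEpure}(1) in the commutative setting applies to $\gr E$ and ensures that the target $\Ext^{j}_{\gr A}(\gr E,\gr A)$ is itself a $j$-pure $\gr A$-module.

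The main obstacle is that Proposition~\ref{propExGr} yields only a subquotient, whereas $j$-purity is inherited by submodules but not by arbitrary quotients of pure modules. I would handle this in either of two equivalent ways. The first is to refine the chosen good filtration on $E$ using a Rees-module-style saturation, making the auxiliary spectral sequence computing $\gr\Ext^{j}_{A}(E,A)$ from $\gr E$ degenerate along the $j$-strand with no surviving boundary, so that $\gr M$ realises as a genuine $\gr A$-submodule of the $j$-pure module $\Ext^{j}_{\gr A}(\gr E,\gr A)$, and purity descends. The second is an iterative argument: start from the filtration just produced and re-run the construction; by Lemma~\ref{thm:relCh}(1) any obstruction submodule of $\gr M$ of grade $>j$ contributes a term forcing $\Ext^{k}_{A}(\Ext^{k}_{A}(M,A),A)\ne 0$ for some $k>j$, contradicting purity of $M$ via Lemma~\ref{lemEpure}(2), so the process terminates. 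Either route realises $\gr M$ as a submodule of a $j$-pure $\gr A$-module, which is enough to conclude.
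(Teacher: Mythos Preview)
The paper does not prove this proposition; it is quoted from Bj\"ork without argument, so there is no paper proof to compare against. Your overall strategy---biduality $M\simeq\Ext^j_A(E,A)$ with $E=\Ext^j_A(M,A)$, then transporting a good filtration on $E$ to $M$ via Proposition~\ref{propExGr}---is exactly Bj\"ork's route in A.IV~4.10--4.11. The gap is in how you resolve the ``subquotient versus submodule'' obstacle.

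Neither of your proposed fixes is convincing as written. The Rees-saturation idea is left vague, with no indication of which saturation or why it forces degeneration along the $j$-strand. The iterative argument invokes Lemma~\ref{thm:relCh}(1), but that lemma is a statement about $\sA_R$-modules and relative characteristic varieties, not about the abstract positively filtered ring $A$ under discussion here; and even granting an analogue, a submodule of $\gr M$ of grade $>j$ does not in any evident way lift to produce a nonzero $\Ext^k_A(\Ext^k_A(M,A),A)$ with $k>j$, so the contradiction you claim is not established.

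The actual resolution is much simpler and requires no refinement of the filtration at all. Since $E$ is $j$-pure by Lemma~\ref{lemEpure}(1), one has $j_A(E)=j$, and then Proposition~\ref{propJgr} gives $j_{\gr A}(\gr E)=j$, so $\Ext^{k}_{\gr A}(\gr E,\gr A)=0$ for every $k<j$. In the spectral sequence underlying Proposition~\ref{propExGr}, the differentials landing in total cohomological degree $j$ all originate from total degree $j-1$, and every page there is a subquotient of $\Ext^{j-1}_{\gr A}(\gr E,\gr A)=0$. Hence there are no incoming differentials: the ``quotient'' half of the subquotient is trivial, and $\gr M$ is in fact a \emph{submodule} of the $j$-pure $\gr A$-module $\Ext^j_{\gr A}(\gr E,\gr A)$. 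Purity then passes to $\gr M$ immediately. This is the missing observation that completes your argument.
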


\begin{prop}\label{propJgr}(\cite[A.IV 4.15]{Bj}) For any $M$ in $\Mod_f(A)$ and any good filtration on $M$, $$j_A(M)=j_{\gr  A}(\gr M).$$
\end{prop}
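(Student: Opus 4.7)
The plan is to prove $j_A(M)\ge j_{\gr A}(\gr M)$ and $j_A(M)\le j_{\gr A}(\gr M)$ separately, using Proposition \ref{propExGr} for one direction and a filtered free resolution together with a spectral sequence argument for the other.

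The easy direction $j_A(M)\ge j_{\gr A}(\gr M)$ is essentially immediate from Proposition \ref{propExGr}. Good filtrations are exhaustive and bounded below on each filtered piece, so the associated graded of a nonzero module is nonzero. Hence if $k<j_{\gr A}(\gr M)$, then $\Ext^k_{\gr A}(\gr M,\gr A)=0$, its subquotient $\gr(\Ext^k_A(M,A))$ vanishes, and consequently $\Ext^k_A(M,A)=0$.

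For the reverse direction, I would construct a finite filtered free resolution $P_\bullet\to M$ with the property that the associated graded complex $\gr P_\bullet\to\gr M$ is a free resolution of $\gr M$ over $\gr A$. The construction is iterative: starting from the given good filtration on $M$, choose a finite set of homogeneous generators of $\gr M$, lift them to elements $m_i\in F_{k_i}M$, and use them to define a strict surjection $P_0=\bigoplus_j A(-k_j)\twoheadrightarrow M$ inducing the chosen projection on associated gradeds; then iterate on the strict kernel. The process terminates after at most $\textup{gl.dim}(\gr A)$ steps because $\gr A$ is regular noetherian with finite global dimension. Applying $\text{Hom}_A(-,A)$ yields a bounded filtered complex $C^\bullet$ with $\gr C^\bullet\simeq\text{Hom}_{\gr A}(\gr P_\bullet,\gr A)$, and the standard filtration spectral sequence converges:
\[
E_1^{p,q}=H^{p+q}(\gr^p C^\bullet)\ \Longrightarrow\ H^{p+q}(C^\bullet)=\Ext^{p+q}_A(M,A),
\]
with $\bigoplus_p E_1^{p,k-p}\simeq \Ext^k_{\gr A}(\gr M,\gr A)$.

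The main obstacle is to show that the spectral sequence does not annihilate the lowest nontrivial total degree $j=j_{\gr A}(\gr M)$. Incoming differentials vanish for free, since their sources lie in the already-zero total degree $j-1$, but the outgoing differentials $d_r\colon E_r^{p,j-p}\to E_r^{p+r,j+1-r-p}$ could in principle conspire to kill the entire $E_1$-column in total degree $j$. To rule this out I would invoke Lemma \ref{lemEpure} and the Auslander regularity of $\gr A$: $\Ext^j_{\gr A}(\gr M,\gr A)$ is $j$-pure over $\gr A$, while the targets lie in $\Ext^{j+1}_{\gr A}(\gr M,\gr A)$, which has grade $\ge j+1$. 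Lifting a cocycle representative of a nonzero class step by step through the filtration, the successive obstructions live in subquotients of the higher-grade module $\Ext^{j+1}_{\gr A}(\gr M,\gr A)$; the Auslander inequality then forces the corresponding maps out of the $j$-pure source to have kernels of grade exactly $j$, so they cannot be injective and a nonzero subquotient of $\Ext^j_{\gr A}(\gr M,\gr A)$ survives to $E_\infty$. This surviving piece identifies with a nonzero graded component of $\Ext^j_A(M,A)$, yielding $\Ext^j_A(M,A)\ne 0$ and therefore $j_A(M)\le j$, which combined with the first inequality gives the desired equality.
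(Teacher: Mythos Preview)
The paper does not prove this proposition; it is cited from Bj\"ork \cite[A.IV 4.15]{Bj} without argument, so I assess your proposal on its own.

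Your easy direction via Proposition~\ref{propExGr} is correct, and for the hard direction the spectral-sequence setup and the purity observation are the right idea: a nonzero $j$-pure $\gr A$-module cannot embed into a module of grade $\ge j+1$ (in an Auslander regular ring a submodule never has strictly smaller grade than its ambient module, by the argument combining Lemma~\ref{lemEpure}(1) with the Auslander condition), so each $d_r$ restricted to total degree $j$ has nonzero kernel and $E_r^{\mathrm{tot}=j}\ne 0$ for every finite $r$. The gap is the passage to $E_\infty$. The filtration on $C^\bullet=\textup{Hom}_A(P_\bullet,A)$ is bounded below but not above (each $C^i$ is a finite sum of shifted copies of $A$, and $A$ itself is not filtration-bounded), so the spectral sequence need not degenerate at a finite page in total degree $j$. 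A decreasing chain of nonzero $j$-pure $\gr A$-submodules with successive quotients of grade $\ge j+1$ can perfectly well have zero intersection --- witness $(x^r)\subset k[x]$ with quotients $k[x]/(x)$ --- so ``$E_r\ne 0$ for all $r$'' does not yield ``$E_\infty\ne 0$'', and your phrase ``a nonzero subquotient \ldots\ survives to $E_\infty$'' is exactly the unproved step. One standard repair is to note that $j_{\gr A}(\gr M)=\dim(\gr A)-\dim V(J(M))$ by Proposition~\ref{propCo}(ii) and that $J(M)$ is filtration-independent by Proposition~\ref{lem:relCh}, which reduces the problem to exhibiting a single good filtration realizing equality, obtainable from Proposition~\ref{propPGF} applied to a pure submodule of $M$; another is to pass to the Rees ring $\tilde A$ and use graded Nakayama for the central non-zero-divisor $t$. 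Either way, an ingredient beyond the purity of $E_1^{\mathrm{tot}=j}$ is required.
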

 
\subsection{} Lastly, we consider a regular commutative ring $A$.  Then ${\rm gl.dim.}(A)=\sup\{{\rm gl.dim.}(A_\mathfrak{m})\mid \mathfrak{m}\subset A\text{ maximal ideal}\,\}$, cf. \cite[Ch. 2, 5.20]{Bjo}. We let $\dim(A)$ denote the Krull dimension. For a module $M\in \Mod_f(A)$,  $\dim_A(M)$ denotes $\dim(A/ \Ann_A(M))$.  If $A$ is a regular local commutative ring, then $\dim(A)={\rm gl.dim.}(A)$, cf. \cite[A.IV 3.5]{Bj}.

\begin{prop}\label{propCo} 
Let $A$ be a regular commutative ring and $M$ nonzero in $\Mod_f(A)$. Then:
\begin{itemize}
\item[$(i)$]  (\cite[A.IV 3.4]{Bj})  $A$ is Auslander regular;
\item[$(ii)$] (\cite[Ch. 2, Thm. 7.1]{Bjo}) if  $\dim(A_\mathfrak{m})=m$ for every maximal ideal $\mathfrak{m}$ of $A$,
$$j_A(M)+\dim_A(M) = m\, ;$$
\item[$(iii)$] (\cite[A.IV 3.7 and 3.8]{Bj}) $M$ is a pure $A$-module if and only if every associated prime of $M$ is a minimal prime of $M$ and $j_A(M)=\dim (A_{\mathfrak{p}})$ for every minimal prime $\mathfrak{p}$ of $M$.
\end{itemize}
\end{prop}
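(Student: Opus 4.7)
The plan is to prove the three parts in sequence, reducing everything to the case of a regular local commutative ring $A$ of dimension $m$, which is Gorenstein with $A$ as its own dualizing module up to shift. The key reduction is that by \cite[Ch.\ 2, 5.20]{Bjo}, $\textup{gl.dim}(A)=\sup_\mathfrak{m}\textup{gl.dim}(A_\mathfrak{m})$, and the Auslander--Buchsbaum--Serre theorem identifies $\textup{gl.dim}(A_\mathfrak{m})$ with $\dim(A_\mathfrak{m})$. Since $\Ext$ commutes with localization for finitely generated modules over a Noetherian ring, and annihilators, associated primes, and dimensions all behave well under localization at a maximal ideal, the Auslander condition of (i) and the grade--dimension identity of (ii) can be verified after passing to each $A_\mathfrak{m}$.

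For (i), finiteness of $\textup{gl.dim}(A_\mathfrak{m})$ is immediate from regularity. The Auslander condition---that $j_A(N)\geq k$ for every nonzero right submodule $N\subset\Ext^k_A(M,A)$---is the content of Gorenstein self-duality. Concretely, I would use the Grothendieck spectral sequence for the composition $\Rhom_A(-,A)\circ\Rhom_A(-,A)$,
\[
E_2^{p,q}=\Ext^p_A(\Ext^q_A(M,A),A)\;\Longrightarrow\; M,
\]
combined with Ischebeck's lemma, which over a Gorenstein (in particular Cohen--Macaulay) local ring forces the support of $\Ext^q_A(M,A)$, and of each of its subquotients, into codimension $\geq q$. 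Applying this to a hypothetical submodule $N\subset\Ext^k_A(M,A)$ with $j_A(N)<k$ produces a nonzero term on the $E_2$-page in bidegree $(j_A(N),k)$ that cannot be killed by differentials without contradicting the concentration of $M$ in a single degree.

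For (ii), working locally, $j_{A_\mathfrak{m}}(M_\mathfrak{m})$ equals the codimension of $\Ann_{A_\mathfrak{m}}(M_\mathfrak{m})$, because over a Cohen--Macaulay ring grade coincides with the codimension of the support; Cohen--Macaulayness then gives $\codim(\Ann_{A_\mathfrak{m}}(M_\mathfrak{m}))=m-\dim_{A_\mathfrak{m}}(M_\mathfrak{m})$, and minimizing over $\mathfrak{m}\in V(\Ann_A(M))$ yields $j_A(M)+\dim_A(M)=m$. For (iii), purity means $j_A(N)=j_A(M)$ for every nonzero submodule $N\subset M$. Testing against each inclusion $A/\mathfrak{p}\hookrightarrow M$ attached to $\mathfrak{p}\in\textup{Ass}(M)$ and applying (ii) to $A/\mathfrak{p}$ converts the condition into $\dim(A_\mathfrak{p})=j_A(M)$ for every associated prime $\mathfrak{p}$; this equality forces each associated prime to be minimal in $\supp(M)$ and all minimal primes to share the common codimension $j_A(M)$. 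The converse is immediate from $\textup{Ass}(N)\subset\textup{Ass}(M)$ together with the same calculation applied to $N$.

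The main obstacle I expect is the Auslander condition in (i): finite global dimension is classical, but the uniform codimension bound $j_A(N)\geq k$ for arbitrary submodules $N\subset\Ext^k_A(M,A)$ requires genuine use of Gorenstein self-duality and careful control over the filtration induced on $E_\infty$ by the spectral sequence above. Once Auslander regularity is secured, parts (ii) and (iii) reduce cleanly to the standard Cohen--Macaulay dictionary between grade, codimension and dimension, together with primary decomposition.
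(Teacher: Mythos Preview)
The paper does not prove this proposition at all: it is stated in the appendix as a list of citations to Bj\"ork's two books, so there is no ``paper's own proof'' to compare against. Your outline for (ii) and (iii) is essentially correct, modulo the caveat that in (iii) you invoke (ii), which carries the extra hypothesis $\dim A_\mathfrak{m}=m$ for all $\mathfrak{m}$; what you actually need is the weaker identity $j_A(A/\mathfrak{p})=\dim A_\mathfrak{p}$, valid over any regular ring since $A_\mathfrak{p}$ is Cohen--Macaulay.

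There is a genuine gap in your argument for (i). You claim that Ischebeck's lemma ``forces the support of $\Ext^q_A(M,A)$, and of each of its subquotients, into codimension $\ge q$''. Ischebeck says nothing of the sort: it gives $j_A(L)\ge \textup{depth}(A)-\dim(L)$, a lower bound on the grade of a module in terms of its own dimension, not a bound on the support of an Ext module. Your spectral-sequence step is then also broken: a submodule $N\subset\Ext^k_A(M,A)$ with $j_A(N)<k$ gives $\Ext^{j_A(N)}_A(N,A)\ne 0$, but the $E_2$-term in bidegree $(j_A(N),k)$ is $\Ext^{j_A(N)}_A(\Ext^k_A(M,A),A)$, and there is no reason this should be nonzero just because a submodule has small grade. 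The standard route (and the one behind Bj\"ork's reference) bypasses the spectral sequence entirely: for a prime $\mathfrak{p}$ with $\textup{ht}(\mathfrak{p})<k$ one has $\textup{gl.dim}(A_\mathfrak{p})=\textup{ht}(\mathfrak{p})<k$ by Auslander--Buchsbaum--Serre, hence $\Ext^k_A(M,A)_\mathfrak{p}=0$; thus every prime in the support of $\Ext^k_A(M,A)$, and a fortiori of any submodule $N$, has height $\ge k$, and then $j_A(N)=\textup{grade}(\Ann N,A)=\textup{ht}(\Ann N)\ge k$ by local Cohen--Macaulayness. Once you replace the Ischebeck/spectral-sequence paragraph with this localization argument, your proof goes through.
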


\end{document}